\documentclass[letterpaper,11pt]{article}
\usepackage[margin=1in]{geometry}
\usepackage[bookmarks, colorlinks=true, plainpages = false, colorlinks=true,
            citecolor=red,
            linkcolor=blue,
            anchorcolor=red,
            urlcolor=blue]{hyperref}
\usepackage{url}\urlstyle{rm}
\usepackage{amsmath,amsfonts,amsthm,amssymb,bm, verbatim,dsfont,mathtools}
\usepackage{color,graphicx,appendix}
\usepackage{etoolbox}

\usepackage{array}
\usepackage{multirow}
\usepackage{float}
\usepackage{tikz}
\usetikzlibrary{matrix,arrows,calc,shapes,backgrounds}
\usetikzlibrary{shapes.callouts,decorations.text} 

\usetikzlibrary{shapes.misc}

\tikzset{cross/.style={cross out, draw=black, minimum size=2*(#1-\pgflinewidth), inner sep=0pt, outer sep=0pt},
cross/.default={1pt}}

\tikzstyle{int}=[draw, fill=blue!20, minimum size=2em]
\tikzstyle{dot}=[circle, draw, fill=blue!20, minimum size=2em]
\tikzstyle{dotred}=[circle, draw, fill=red!20, minimum size=2em]
\tikzstyle{init} = [pin edge={to-,thin,black}]
\tikzstyle{initred} = [pin edge={to-,thin,red}]
\tikzstyle{plan}=[draw, fill=blue!20, minimum size=2em, text width=5em, rounded corners,align=center]
\tikzstyle{planwide}=[draw, fill=blue!20, minimum size=2em, text width=8em, rounded corners,align=center]
\tikzstyle{nodedot}=[circle, draw, fill=white, minimum size=0.3cm,inner sep=0pt]
\tikzstyle{nodedot}=[circle, draw, fill=white, minimum size=3,inner sep=0pt]
\tikzstyle{Medge}=[green!60!black, thick]
\tikzstyle{Bedge}=[red, thick]
\tikzstyle{Cedge}=[blue, thick]
\tikzstyle{Sedge}=[black, thick]
\tikzstyle{Mgiantedge}=[green!60!black, line width=3.0pt]
\tikzstyle{Bgiantedge}=[red,line width=3.0pt]
\tikzstyle{Cgiantedge}=[blue,line width=3.0pt]
\tikzstyle{Sgiantedge}=[black,line width=3.0pt]
\tikzstyle{shadedgiantnode}=[circle, draw, fill=black!10, minimum size=1cm, inner sep=0pt]
\tikzstyle{unshadedgiantnode}=[circle, draw, fill=white, minimum size=1cm, inner sep=0pt]
\makeatletter
\tikzset{my loop/.style =  {to path={
  \pgfextra{}
  [looseness=5,min distance=10mm]
  \tikz@to@curve@path},font=\sffamily\small
  }}  
\makeatletter 
\newcolumntype{C}[1]{>{\centering\arraybackslash}p{#1}}

\tikzstyle{vertexdot}=[circle, draw, fill=white, minimum size=3,inner sep=0pt]
\tikzstyle{vertexdotsolid}=[circle, draw, fill=black, minimum size=3,inner sep=0pt]

\newcommand{\PathThree}{\tikz[scale=0.5]{\draw (0,0) node (zero) [nodedot] {} -- (0.5,0) node (zero) [nodedot] {} -- (1.0,0) node (zero) [nodedot] {};}}

\newcommand{\PathFive}{\tikz[scale=0.75]{\draw (0,0) node (zero) [nodedot] {} -- (0.5,0) node (zero) [nodedot] {} -- (1.0,0) node (zero) [nodedot] {} -- (1.5,0) node (zero) [nodedot] {} -- (2,0) node (zero) [nodedot] {};}}

\newcommand{\Chair}{\tikz[scale=0.75,baseline = -0.5]{\draw (0,-0.5) node (zero) [vertexdot] {} -- (0,0) node[vertexdot] {} -- (0.5,0) node[vertexdot] {} -- (1,0) node[vertexdot] {}; \draw (0,0) node[vertexdot] {} -- (0,0.5) node[vertexdot] {} }}

\newcommand{\Star}{\tikz[scale=0.75,baseline= -0.5]{\draw (0,-0.5) node (zero) [vertexdot] {} -- (0,0) node[vertexdot] {} -- (0.5,0) node[vertexdot] {}; \draw (0.5,0) node[vertexdot] {} -- (-0.5,0) node[vertexdot] {} ;  
\draw (0,0) node[vertexdot] {} -- (0,0.5) node[vertexdot] {} 
}}

\renewcommand{\Diamond}{\tikz[scale=0.5,baseline=(zero.base)]{\draw (0.5,0)--(0,0.5); \draw (0,0) node (zero) [vertexdot] {} -- (0.5,0) node[vertexdot] {} -- (0.5,0.5) node[vertexdot] {} -- (0,0.5) node[vertexdot] {} -- cycle; }}

\usepackage{xr,xspace}
\usepackage{todonotes}
\usepackage{paralist}
\usepackage{enumitem}

\usepackage{caption,subcaption,soul}
\usepackage{algorithm}
\usepackage{algpseudocode}
\makeatletter
\usepackage{romanbar}

\theoremstyle{plain}
\newtheorem{theorem}{Theorem}
\newtheorem{lemma}{Lemma}
\newtheorem{proposition}{Proposition}

\theoremstyle{definition}

\newtheorem{problem}{Problem}

\newtheorem{remark}{Remark}

\newtheorem*{remark*}{Remark}

\newcommand{\floor}[1]{\left\lfloor #1 \right\rfloor}
\newcommand{\ceil}[1]{\left\lceil #1 \right\rceil}

\newcommand{\argmax}{\mathop{\arg\max}}

\newcommand{\Uniform}{\mathrm{Uniform}}

\usepackage{xspace,prettyref}

\newcommand\numberthis{\addtocounter{equation}{1}\tag{\theequation}}


\newcommand{\termI}{\text{(I)}}
\newcommand{\termII}{\text{(II)}}


\newcommand{\diverge}{\to\infty}

\newcommand{\iiddistr}{{\stackrel{\text{\iid}}{\sim}}}

\newcommand{\reals}{{\mathbb{R}}}

\newcommand{\naturals}{{\mathbb{N}}}



\newcommand{\red}{\color{red}}
\newcommand{\blue}{\color{blue}}
\newcommand{\green}{\color{green!60!black}}

\newcommand{\nb}[1]{{\sf\blue[#1]}}
\newcommand{\nbr}[1]{{\sf\red[#1]}}
\newcommand{\ssy}[1]{{\sf\green[#1]}}
\newcommand{\Expect}{\mathbb{E}}
\newcommand{\expect}[1]{\mathbb{E}\left[ #1 \right]} 

\newcommand{\expects}[2]{\mathbb{E}_{#2}\left[ #1 \right]}

\newcommand{\Prob}{\mathbb{P}}

\newcommand{\prob}[1]{ \mathbb{P}\left\{ #1 \right\} }

\newcommand{\Cov}{\mathrm{Cov}}

\def\Var{\mathrm{Var}}

\newcommand{\Bern}{{\rm Bern}}

\newcommand{\ie}{i.e.\xspace}
\newcommand{\iid}{i.i.d.\xspace}
\newrefformat{eq}{(\ref{#1})}
\newrefformat{chap}{Chapter~\ref{#1}}
\newrefformat{sec}{Section~\ref{#1}}
\newrefformat{alg}{Algorithm~\ref{#1}}
\newrefformat{fig}{Figure~\ref{#1}}
\newrefformat{tab}{Table~\ref{#1}}
\newrefformat{rmk}{Remark~\ref{#1}}
\newrefformat{clm}{Claim~\ref{#1}}
\newrefformat{def}{Definition~\ref{#1}}
\newrefformat{cor}{Corollary~\ref{#1}}
\newrefformat{lmm}{Lemma~\ref{#1}}
\newrefformat{prop}{Proposition~\ref{#1}}
\newrefformat{prob}{Problem~\ref{#1}}
\newrefformat{app}{Appendix~\ref{#1}}
\newrefformat{hyp}{Hypothesis~\ref{#1}}
\newrefformat{thm}{Theorem~\ref{#1}}

\newcommand{\pth}[1]{\left( #1 \right)}
\newcommand{\qth}[1]{\left[ #1 \right]}
\newcommand{\sth}[1]{\left\{ #1 \right\}}


\newcommand{\iprod}[2]{\left \langle #1, #2 \right\rangle}

\newcommand{\indc}[1]{{\mathbf{1}_{\left\{{#1}\right\}}}}

\newcommand{\sfC}{{\mathsf{C}}}

\newcommand{\sfS}{{\mathsf{S}}}
\newcommand{\sfT}{{\mathsf{T}}}

\newcommand{\calG}{{\mathcal{G}}}
\newcommand{\calH}{{\mathcal{H}}}

\newcommand{\calN}{{\mathcal{N}}}

\newcommand{\calP}{{\mathcal{P}}}
\newcommand{\calQ}{{\mathcal{Q}}}

\newcommand{\calS}{{\mathcal{S}}}
\newcommand{\calT}{{\mathcal{T}}}

\newcommand{\ER}{Erd\H{o}s--R\'enyi\xspace}

\renewcommand{\tilde}{\widetilde}
\renewcommand{\bar}{\overline}
\newcommand{\aut}{\mathsf{aut}}
\newcommand{\sub}{\mathsf{sub}}
\newcommand{\col}{\mathrm{col}}
\newcommand{\NumVer}{M}


\begin{document}




\title{Testing network correlation efficiently via counting trees
}

\author{Cheng Mao, Yihong Wu,  Jiaming Xu, and Sophie H.\ Yu\thanks{
C.\ Mao is with the School of Mathematics, Georgia Institute of Technology, Atlanta, Georgia, USA
\texttt{cheng.mao@math.gatech.edu}.
Y.\ Wu is  with the Department of Statistics and Data Science, Yale University, New Haven CT, USA, 
\texttt{yihong.wu@yale.edu}.
J.\ Xu and S.\ H.\ Yu are with The Fuqua School of Business, Duke University, Durham NC, USA, \texttt{\{jx77,haoyang.yu\}@duke.edu}.
C.~Mao is supported in part by the NSF Grant DMS-2053333. Y.~Wu is supported in part by the NSF Grant CCF-1527105, an NSF CAREER award CCF-1651588, and an Alfred Sloan fellowship. J. Xu is supported in part by the NSF Grant CCF-1856424
and an NSF CAREER award CCF-2144593. S. H.~Yu is supported by the NSF Grant CCF-1856424.
}}

\date{\today}
\maketitle

\begin{abstract}

We propose a new procedure for testing whether two networks are edge-correlated through some latent vertex correspondence. The test statistic is based on counting the co-occurrences of signed trees for a family of non-isomorphic trees.
When the two networks are \ER random graphs $\calG(n,q)$
 that are either independent or correlated with correlation coefficient $\rho$, 
our test runs in $n^{2+o(1)}$ time and succeeds with high probability as $n\to\infty$, provided that $n\min\{q,1-q\} \ge n^{-o(1)}$ and 
$\rho^2>\alpha \approx 0.338$, where 
$\alpha$ is Otter's constant so that the number of unlabeled trees with $K$ edges grows as $(1/\alpha)^K$. 
 This significantly  improves  the  prior work in  terms  of  statistical accuracy, running time, and graph  sparsity.


\end{abstract}

\tableofcontents

\section{Introduction}

In recent years, there is a surge of interest in studying the problem of graph matching or network alignment, which aims to find the latent vertex correspondence between the two graphs solely based on their network topologies. This paradigm arises in a suite of diverse applications, such as 
social network analysis \cite{narayanan2008robust,narayanan2009anonymizing}, computer vision \cite{cour2007balanced,berg2005shape}, computational biology \cite{cour2007balanced,berg2005shape}, and natural language processing \cite{haghighi2005robust, bayati2013message}.

Finding the optimal vertex correspondence that best aligns the two graphs amounts to solving the 
the NP-hard quadratic assignment problem (QAP). Aiming to circumvent its worst-case intractability,
a popular statistical model for graph matching is the \emph{correlated \ER graph model}, denoted by $\calG(n,q, \rho)$, in which the observed graphs are two instances of the \ER graph $\calG(n,q)$ whose edges are correlated through a hidden vertex correspondence.
Specifically, let $\pi$ be a latent uniform random permutation on $[n]\triangleq \{1,\ldots,n\}$.
Denote the observed graphs by $G_1$ and $G_2$ and their adjacency matrices by $A=(A_{ij})$ and $B=(B_{ij})$ respectively. 
Conditioned on the permutation $\pi$, the pairs of edges $\{(A_{ij}, B_{\pi(i)\pi(j)}): 1 \leq i<j\leq n\}$ are i.i.d.\ pairs of Bernoulli random variables with
parameter $q \in (0,1)$ and correlation coefficient $\rho$.\footnote{One can verify that the correlation coefficient $\rho$ between two Bernoulli random variables with parameter $q$ takes values in $[-\min\{ \frac{q}{1-q} , \frac{1-q}{q} \}, 1]$. While most of previous work focuses on the positively correlated case, in this paper we allow negative correlation.}
In the special case of $\rho>0$, $A$ and $B^\pi=(B_{\pi(i)\pi(j)})$
can be viewed as adjacency matrices of 
two children graphs that are 
independently edge-subsampled from a common parent \ER graph $\calG(n, p)$ with subsampling probability $s$, where $p=q/s$
and $s=\rho(1-q)+q$ \cite{pedarsani2011privacy}.
The goal is to recover the true vertex mapping $\pi$ based on $G_1$ and $G_2$. Under the correlated \ER graph model, the information-theoretic thresholds for both exact and partial recovery have been characterized \cite{cullina2016improved,cullina2017exact,Hall2020partial,wu2021settling} and various efficient matching algorithms with provable 
 performance guarantees have been devised \cite{feizi2016spectral,lyzinski2016graph,ding2018efficient,barak2019nearly,FMWX19a,FMWX19b,ganassali2020tree,ganassali2021correlation,mao2021random,mao2021exact}. 

\subsection{Detecting network correlation}

Despite the significant amount of research activities and remarkable progress in the graph matching problem, relatively less attention has been paid to the even more basic problem of detecting the presence of correlation in network topology between two otherwise independently generated graphs. This problem is practically important in many aforementioned application domains such as detecting similar $3$-D objects in computer vision or similar biological networks across different species. From a theoretical point of view, network correlation detection can be viewed as a natural extension of the classical problem of correlation detection for vector data
(testing the correlation between two random vectors under an unknown orthogonal transformation \cite{stephens1979vector})
to network data. In addition, the detection problem offers insights into the computational limits of graph matching; see~\prettyref{sec:hardness} for an in-depth discussion.

Following~\cite{barak2019nearly,wu2020testing}, we formulate the problem of detecting network correlation as a hypothesis testing problem, where
\begin{itemize}
\item 
Under the null hypothesis 
$\calH_0$, $G_1$ and $G_2$
are independently generated from the \ER graph model $\calG(n,q)$; 

\item
Under the alternative hypothesis,
$\calH_1$, $G_1$ and $G_2$ are generated from the correlated \ER graph model $\calG(n,q,\rho)$.
\end{itemize}
Note that under both $\calH_0$ and $\calH_1$, the graphs $G_1$ and $G_2$ are marginally distributed 
as $\calG(n,q)$. 
The goal is to distinguish $\calH_0$ from $\calH_1$ based on the observation of 
$G_1$ and $G_2$. 
We say a test statistic $f ( G_1, G_2)$ with threshold $\tau \in \reals$ achieves consistent detection 
if the sum of type I and type II errors converges to 0 as $n\diverge$, that is, 
\begin{align}
    \lim_{n\diverge} \left[\calQ(f(G_1,G_2)\geq \tau)
    +\calP(f(G_1,G_2)<\tau)\right] = 0, \label{eq:detection}
\end{align}
where $\calQ$ and $\calP$ denote the joint distribution of $G_1$ and $G_2$ under $\calH_0$
and $\calH_1$, respectively. 

\subsection{Subgraph counts}\label{sec:subgraph_count}

Note that due to the latent random vertex mapping $\pi$, the problem is equivalent to testing the correlation between two unlabeled graphs.
Thus, any test must rely on on \emph{graph invariants} ---
graph properties that are invariant under graph isomorphisms, 
such as subgraph counts or graph eigenvalues. 
This paper adopts a strategy based on subgraph counts and improves upon prior works \cite{barak2019nearly,wu2020testing}, which we now discuss.



In order to determine the information-theoretic limit, \cite{wu2020testing} considered the QAP test statistic, namely, the maximum number of common edges between $G_1$ and $G_2$ over all possible vertex correspondences, which by definition is invariant under isomorphisms of both graphs (Equivalently, this test can be viewed as finding the maximum common subgraph of $G_1$ and $G_2$).
The QAP test is shown to achieve the optimal detection threshold with sharp constant in the dense regime and within constant factors in the sparse regime. 
The drawback of this test is the computational intractability of solving the QAP.
In addition, a simple test based on comparing  the number of edges in $G_1$ and $G_2$ is  also analyzed in \cite{wu2020testing};
however, its statistical power is weak, requiring the correlation parameter $\rho$ to approach $1$ to achieve consistent detection. 

To obtain a more powerful yet computationally efficient statistic, a natural idea is to count 
 more complex subgraphs than edges. 
Specifically, let $\sub(H,G)$ denote the subgraph count, i.e., the number of copies, of $H$ in $G$. 
Crucially, for any given graph $H$, $\sub(H,G_1)$ and $\sub(H,G_2)$ are 
independent under $\calH_0$ but correlated under $\calH_1$. 
Thus, one can distinguish $\calH_0$ and $\calH_1$
by thresholding on the covariance of $\sub(H,G_1)$ and $\sub(H,G_2)$, that is,
\[
P_H \triangleq \left(\sub(H,G_1)-\expect{\sub(H,G_1)}\right) \left(\sub(H,G_2)-\expect{\sub(H,G_2)}\right) \, .
\]
However, counting a single graph $H$ may not suffice, especially when the graphs are sparse and the correlation is weak. To obtain a better test statistic, \cite{barak2019nearly} considers 
a large family $\calH$ of non-isomorphic subgraphs and further sum $P_H$ over $H \in \calH$. A key step in the analysis is to ensure that  $P_H$'s are approximately independent across different  $H$ so that the $\sum_{H \in \calH} P_H$ has a relatively small variance; this requires a careful choice of the collection $\calH$.

To this end, \cite{barak2019nearly} proposes to count a family of the so-called \emph{(strictly) balanced} graphs. A graph $H$
is
called (strictly) balanced if every proper subgraph of $H$
is (strictly) less dense than the graph $H$ itself, where the density is defined as the number of edges divided by the number of nodes. 
It is well known in random graph theory that strict balancedness ensures that the subgraph counts are well concentrated~\cite[Chapter~3]{janson2011random}. 
Specifically, 
\cite{barak2019nearly} gives a probabilistic construction of a family $\calH$ of strictly balanced graphs of a constant size $K$ and
a constant edge density $\eta>1$,
assuming 
$nq^\eta=\Theta(1)$.
Under this choice, it is shown that the test statistic $\sum_{H \in \calH} P_H$ distinguishes between the two hypotheses with probability at least $0.9$,
when $\rho = \Omega (1)$ and the average degree 
$nq$ belongs to $[n^{\epsilon},n^{1/153}] \cup [n^{2/3},n^{1-\epsilon}] $ for any small constant $\epsilon>0$.

While the test in~\cite{barak2019nearly} succeeds under small correlation $\rho$,
it still has several key limitations. First, the existing performance guarantee in~\cite{barak2019nearly} requires the average degree $nq$ to fall into a very specific range and in particular does not accommodate sparse graphs such as $nq = O(1)$; this restriction originates from the construction of strictly balanced graphs based on random regular graphs. Second, as an exhaustive search of subgraphs of size $K$ takes $n^{O(K)}$
time, in order to achieve a polynomial run time, \cite{barak2019nearly} only counts subgraphs of constant size $K=\Theta(1)$, which precludes the possibility of achieving a vanishing error probability. 
Third, to tolerate small correlation $\rho$, $K$ needs to be as large as $(1/\rho)^C$ for a large constant $C$, leading to a time complexity of $n^{(1/\rho)^C}$, which can be prohibitive for large networks. 

In this paper, we propose a new  test that  runs in time $n^{2+o(1)}$ and succeeds with high probability as long as $\rho^2$ exceeds $0.338$ and $n\min\{q,1-q\} \ge n^{-o(1)}$, i.e., the graphs are not overly sparse or dense. This significantly improves on
the previous results in~\cite{barak2019nearly,wu2020testing} in terms of  graph sparsity, testing error, and computational complexity. 

Our main strategy is to count \emph{trees} of $K$ edges in the observed graphs, where, crucially, $K$ grows with $n$. 
While by definition a tree with  $K$ edges is also strictly balanced with density $K/(K+1)$, our test statistic is fundamentally different from that in~\cite{barak2019nearly}. In particular, if we adopt their test statistic with $\calH$ being the collection of trees with $K$ edges, their existing result requires $nq^{K/(K+1)}=\Theta(1)$ and thus cannot deal with the most interesting case of $nq \ge 1$.
In fact, when $nq \ge 1$, for any two non-isomorphic trees $H \not \cong H'$, 
the covariance between $\sub(H,G_1)$ and $\sub(H',G_1)$ is on the same order as the variance of $\sub(H,G_1)$. 
As a consequence, the covariance between $P_{H}$ and $P_{H'}$ is relatively large compared to  $(\expect{P_H})^2$ under $\calH_1$. Hence, the standard deviation of $\sum_{H \in \calH} P_H$ is on par with its mean under $\calH_1$, leading to the likely failure of the test statistic in~\cite{barak2019nearly}.  
Therefore, a key challenge is how to better utilize the tree counts so that
 the correlation between the counts of non-isomorphic trees does not overwhelm the \emph{signal} --- the correlation of counts of isomorphic trees.

Furthermore, direct enumeration of trees of size $K$ takes $n^{O(K)}$ time which is super-polynomial if $K$ grows with $n$. 
To resolve this issue, we leverage the idea of \emph{color coding}~\cite{alon1995color,arvind2002approximation,hopkins2017bayesian} to develop an $n^2 e^{O(K)}$-time algorithm that approximates our test statistic and succeeds under the same condition.
When applied to the correlated \ER model, $K$ can be chosen to grow with $n$ arbitrarily slowly so that the test runs in $n^{2+o(1)}$ time.

\subsection{Notation and paper organization} \label{sec:graph_def_obs}

For any graph $H$, let $V(H)$ denote the vertex set of $H$ and $E(H)$ denote the edge set of $H$.  
		Two graphs $H$ and $H'$ are isomorphic, denoted by $H \cong H'$, if there exists a bijection $\pi:V(H)\to V(H')$ such that $(\pi(u),\pi(v)) \in E(H')$ if and only if $(u,v) \in E(H)$.
		Denote by $[H]$ the isomorphism class of $H$; it is customary to refer to these isomorphic classes as \emph{unlabeled graphs}.
    Let $\aut(H)$ be the number of automorphisms of $H$ (graph isomorphisms to itself). 
    We say $H$ is a subgraph of $G$, denoted by $H \subset G$,  if $V(H)\subset V(G)$ and $E(H) \subset E(G)$.
		We define $\sub(H,G)$ as 
    the number of subgraphs in $G$ that are isomorphic to $H$, i.e.,
   $   \sub(H,G)\triangleq  \sum_{H' \subset G} \indc{H' \cong H}$.
	For example, $\sub(\PathThree,\; \Diamond) = 8$.
     Denoting by $K_n$ the complete graph with vertex set $[n]$ and edge set $\binom{[n]}{2} \triangleq \{\{u,v\}: u,v\in[n], \, u \ne v\}$,  
  we abbreviate 
  \begin{equation}
  \sub_n(H) \triangleq \sub(H,K_n) = \binom{n}{|V(H)|}  \frac{|V(H)|!}{\aut(H)},
\label{eq:subH}
\end{equation}
where the equality follows by enumerating all possible vertex relabeling of $H$: $V(H) \to [n]$ 
modulo the 
automorphism of $H$ (see, e.g.~\cite[Lemma 5.1]{frieze2016introduction}). 
	For each subset $S \subset \binom{[n]}{2}$, we identify it with an (edge-induced) subgraph of $K_n$.
    Let $\calS_n$ denote the set of permutations $\pi : [n] \to [n]$.
    For a given permutation $\pi \in \calS_n$, let  
    $\pi(S) = \{(\pi(u),\pi(v)): (u,v) \in S\}$, which is identified with a relabeled version of $S$.
    

    

    For two real numbers $a$ and $b$, we let $a \vee b \triangleq \max\{a, b\}$
and $a\wedge b \triangleq \min\{a, b\}$. 
We use standard asymptotic notation: For two sequences $\{a_n\}$ and $\{b_n\}$ of positive numbers, we write $a_n = O(b_n)$, if $a_n \le C b_n$ for an absolute constant $C$ and for all $n$; $a_n = \Omega(b_n)$, if $b_n = O(a_n)$; $a_n = \Theta(b_n)$, if $a_n = O(b_n)$ and $a_n = \Omega(b_n)$; 
$a_n = o(b_n)$ or $b_n = \omega(a_n)$, if $a_n / b_n \to 0$ as $n\diverge$.

 The rest of the paper is organized as follows. In \prettyref{sec:main_result}, we 
first present the theoretical guarantees of our signed tree-counting statistic; then discuss the connection to the low-degree approximation of likelihood ratio and the computational hardness based on the low-degree approximation; and finally conclude this section with further related works. \prettyref{sec:stats} provides the statistical analysis of our signed tree-counting statistic, with detailed proofs deferred to  \prettyref{app:stats}. 
In \prettyref{sec:alg_tree_counting}, we present an efficient algorithm to approximately compute our signed tree-counting statistic based on color coding. Detailed proofs of the statistical and computational guarantees of the algorithm are postponed to \prettyref{app:algorithm}. 
In \prettyref{sec:numerical}, we conduct numerical experiments to corroborate our theoretical findings. 
In addition, several preliminary facts on graphs are collected in \prettyref{app:prelim}. 

\section{Main results and discussions} \label{sec:main_result}

\subsection{Test statistics and theoretical guarantees}

Deviating from the previous approach of counting subgraphs in $G_1$ and $G_2$ directly~\cite{barak2019nearly},
we propose to count trees in the centered version of $G_1$ and $G_2$, which we call \emph{signed} trees following \cite{bubeck2016testing}.
Specifically, denote by $A$ and $B$ the adjacency matrices of $G_1$ and $G_2$ respectively, and by $\bar{A}=A-\expect{A}$ and $\bar{B}=B-\expect{B}$ their centered version. 
Let $\calT$ denote the set of unlabeled trees with $K$ edges. 
For example, for $K=4$, $\calT$ consists of three trees shown in pictograms below (see \cite[App.~I]{harary1959number} for bigger examples)
\[
\calT = \sth{\PathFive \, , \quad  \Chair \, , \quad \Star }
\]
and our test statistic is determined by the number of their copies in the observed graphs.

Define
\begin{align}
f_{\calT} (A,B) \triangleq \sum_{ [H] \in \calT} f_H(A, B), \quad \text{ where }
f_H(A, B) \triangleq  \beta \, \aut(H)  W_{H} (\Bar{A})  W_{H} (\Bar{B}) \, .
\label{eq:f}
\end{align}
Here 
\begin{align}
    \beta= \left(\frac{\rho}{q(1-q)}\right)^{K} \frac{(n-K-1)!}{n!} \label{eq:beta} 
\end{align} 
is a scaling factor introduced for ease of analysis, 
each $[H]\in\calT$ is an unlabeled tree,
and for any weighted adjacency matrix $M$ on vertex set $[n]$, we define
\begin{align}
    W_{H}(M) \triangleq  \sum_{ S \cong H } \prod_{(i,j)\in S } M_{ij} \, , \label{eq:W_H}
\end{align}
where the sum is over subgraphs of $K_n$ that are isomorphic to $H$.
When $M$ is the unweighted adjacency matrix of a graph $G$, $W_H(M)$ reduces to the subgraph count $\sub(H,G)$.  Thus $W_H(M)$ can be viewed as a natural generalization of the subgraph count to weighted graphs. Crucially, after centering
$\expect{W_H(\bar{A}) W_{H'}(\bar{A})} \neq 0$ if and only if 
$H \cong H'$; the same identity holds for $\bar{B}$. 
This orthogonality property immediately implies that $f_H(A,B)$ and $f_{H'}(A,B)$ are uncorrelated under the null hypothesis $\calH_0$ for $H \ncong H'$ and further enables us to control the correlation between $f_H(A,B)$ and $f_{H'}(A,B)$ under $\calH_1$. Indeed, we can readily  show that $\Expect_{\calQ}[f_{\calT} (A,B)]=0$ and 
$$
\frac{(\Expect_{\calP}[f_{\calT}(A,B)])^2}{\Var_\calQ[f_{\calT}(A,B)]} =\rho^{2K} |\calT| \, .
$$
A celebrated result of Otter \cite{otter1948number} is that the number of unlabeled trees grows exponentially with
\begin{equation}
\lim_{K\to \infty} |\calT|^{1/K}= 1/\alpha,
    \label{eq:otter}
\end{equation}
 where $\alpha \approx 0.33833$ is Otter's constant. Therefore whenever the correlation satisfies $\rho^2 >\alpha$, we have $\Var_\calQ[f_{\calT}] =o((\Expect_{\calP}[f_{\calT}])^2)$. 
Furthermore, with additional assumptions,
we can show that 
$\Var_\calP[f_{\calT}]=o((\Expect_{\calP}[f_{\calT}])^2)$.
This requires a delicate analysis of the covariance between
$W_H(\bar{A}) W_{H}(\bar{B})$
and $W_{H'}(\bar{A}) W_{H'}(\bar{B})$ and  further leveraging the tree property (see~\prettyref{rmk:prop2}). Combining these variance bounds with Chebyshev's inequality, we arrive at the following sufficient condition for the statistic $f_{\calT}(A,B)$ to achieve consistent detection. 

\begin{theorem} \label{thm:low_degree}
Suppose   
\begin{align}
    n \min \{q, 1-q \} \ge n^{-o(1)}, \quad \rho^2 > \alpha,  \quad \omega(1) \le K \le \frac{\log n}{16 \log \log n \vee 2 \log \left(\frac{1}{n \min\{q, 1-q\}}\right)} \, , \label{eq:K_condition}
\end{align}
%
where $\alpha \approx 0.33833$ is  Otter's constant. 
Then the testing error satisfies 
\begin{equation}
    \label{eq:fsuccess}
\calQ (f_{\calT}(A,B)\ge \tau) + \calP (f_{\calT}(A,B)\le \tau )=o(1),
\end{equation}
where the threshold is chosen as
\[
\tau = C \expects{f_{\calT}(A,B)}{\calP}
=C\rho^{2K}|\calT|
\]
for any fixed constant $0<C<1$.
\end{theorem}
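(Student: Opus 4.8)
The plan is to apply Chebyshev's inequality under both hypotheses, which reduces the theorem to establishing the two second-moment bounds already announced in the text: $\Var_\calQ[f_\calT] = o((\Expect_\calP[f_\calT])^2)$ and $\Var_\calP[f_\calT] = o((\Expect_\calP[f_\calT])^2)$. Given these, the choice $\tau = C\,\Expect_\calP[f_\calT]$ with $0<C<1$ separates the two hypotheses: under $\calQ$ we have $\Expect_\calQ[f_\calT]=0$, so $\calQ(f_\calT \ge \tau) \le \Var_\calQ[f_\calT]/\tau^2 = o(1)$; under $\calP$, the event $\{f_\calT \le \tau\}$ forces a deviation of at least $(1-C)\Expect_\calP[f_\calT]$ from the mean, so $\calP(f_\calT \le \tau) \le \Var_\calP[f_\calT]/((1-C)^2(\Expect_\calP[f_\calT])^2) = o(1)$. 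Adding the two bounds gives \eqref{eq:fsuccess}.

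First I would record the exact first and second moments under $\calQ$. The orthogonality property $\Expect_\calQ[W_H(\bar A)W_{H'}(\bar A)]\ne 0 \iff H\cong H'$ (and likewise for $\bar B$), combined with independence of $A$ and $B$ under $\calQ$, immediately yields $\Expect_\calQ[f_\calT]=0$ and decouples $\Var_\calQ[f_\calT] = \sum_{[H]\in\calT} \beta^2 \aut(H)^2\, \Expect_\calQ[W_H(\bar A)^2]\,\Expect_\calQ[W_H(\bar B)^2]$. Each factor $\Expect_\calQ[W_H(\bar A)^2]$ counts pairs of isomorphic copies of $H$ sharing a common edge set (since distinct edges of $\bar A$ are independent, mean-zero), which for a tree is dominated by the aligned term and evaluates, after multiplying by $\beta^2\aut(H)^2$, to a clean expression; summing over $[H]\in\calT$ and comparing with $\Expect_\calP[f_\calT] = \rho^{2K}|\calT|$ (also to be verified via the covariance structure under $\calP$) gives the ratio $\rho^{2K}|\calT|^{-1}\cdot(\text{stuff})$, which is $o(1)$ exactly when $\rho^2|\calT|^{1/K} < 1$, i.e.\ via Otter's bound \eqref{eq:otter} when $\rho^2 < 1/\alpha$ — wait, the relevant inequality is the ratio $\Var_\calQ/(\Expect_\calP)^2 = \rho^{-2K}|\calT|^{-1}\to 0$, requiring $|\calT|^{1/K}\rho^{-2}\to\infty$, which holds since $|\calT|^{1/K}\to 1/\alpha > 1/\rho^2$ follows from $\rho^2 > \alpha$. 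The role of the upper bound on $K$ in \eqref{eq:K_condition} is to guarantee the sparsity-dependent lower-order corrections (from non-aligned pairs of copies and from the $|V(H)|$-dependent ratios in \eqref{eq:subH}) stay negligible; I would isolate these as an elementary estimate using $K\le \log n / (2\log(1/(n\min\{q,1-q\})))$ and $K \le \log n/(16\log\log n)$.

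The substantive work, and the main obstacle, is the bound $\Var_\calP[f_\calT] = o((\Expect_\calP[f_\calT])^2)$. Here $A$ and $B$ are dependent, so $f_H(A,B)$ and $f_{H'}(A,B)$ need not be uncorrelated, and one must control $\Cov_\calP(W_H(\bar A)W_H(\bar B),\, W_{H'}(\bar A)W_{H'}(\bar B))$ for all pairs $[H],[H']\in\calT$, including $H\cong H'$. I would expand each $W_H(\bar A)W_H(\bar B)$ over pairs $(S,T)$ of copies of $H$, so the covariance becomes a sum over quadruples $(S,T,S',T')$ where $S,S'\cong H$ and $T,T'\cong H'$, weighted by $\Expect_\calP[\prod_{S}\bar A\prod_{S'}\bar A]\cdot\Expect_\calP[\prod_{T}\bar B\prod_{T'}\bar B]$ — no, under $\calP$ the $A$- and $B$-edges are coupled through $\pi$, so in fact the expectation is a joint one over the overlap pattern of $S\cup S'$ in $G_1$ and $\pi^{-1}(T\cup T')$ in $G_2$; averaging over the uniform $\pi$ reduces this to a combinatorial count of how $S,S',T,T'$ can be mutually aligned. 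The key structural input (flagged as \prettyref{rmk:prop2} in the text) is that because $H,H'$ are \emph{trees}, the overlap graph $S\cap S'$ is a forest, which forces its density below that of $H$ and makes the contribution of genuinely overlapping configurations lower-order relative to the ``diagonal'' contribution where $S=S'$, $T=T'$ and $S$ aligns with $T$ via $\pi$. I would organize the quadruples by their ``pattern'' (the isomorphism type of the union, together with which edges are shared), show the dominant pattern reproduces $(\Expect_\calP[f_\calT])^2$ up to the diagonal-only piece which is $o(1)$ of it by the $\calQ$-variance computation, and bound every off-diagonal pattern by $n^{-\Omega(1)}$ or $e^{O(K)}$ times a negative power of $n\min\{q,1-q\}$, absorbed using the $K$-range. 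This last step — enumerating overlap patterns of trees and showing each is subdominant uniformly over $\calT$, with constants that survive summing over the $|\calT| = e^{\Theta(K)}$ terms — is where the real care is needed, and is presumably what the deferred proofs in \prettyref{app:stats} carry out in detail.
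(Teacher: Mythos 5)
Your skeleton is the same as the paper's: Chebyshev's inequality at the threshold $\tau=C\,\Expect_{\calP}[f_\calT]$, the exact identities $\Expect_{\calQ}[f_\calT]=0$ and $\Var_{\calQ}[f_\calT]=\Expect_{\calP}[f_\calT]=\rho^{2K}|\calT|$ (so the null-side ratio is $1/(\rho^{2K}|\calT|)\to 0$ by Otter's theorem once $\rho^2>\alpha$ and $K=\omega(1)$; your garbled intermediate phrase self-corrects to the right condition), and a second-moment bound under $\calP$. The $\calQ$-side of your argument is essentially complete, since centering makes $\Expect_{\calQ}[W_H(\bar A)W_{H'}(\bar A)]$ vanish unless the two copies coincide, so the "aligned term'' is in fact the only term.

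The gap is that the heart of the theorem, $\Var_{\calP}[f_\calT]=o\big((\Expect_{\calP}[f_\calT])^2\big)$, is left as a plan rather than a proof, and the decisive estimates are exactly what you defer. In the paper this is \prettyref{prop:ER_var_calP} via \prettyref{lmm:L_phi_S_T}: after expanding over quadruples $(S_1,S_2,T_1,T_2)$ and averaging over $\pi$, one must (a) handle the mismatched-overlap patterns (the $A$-copies disjoint but the $B$-copies overlapping, or vice versa) by showing the joint moment $\Expect_{\calP}[\phi_\sfS\phi_\sfT]$ is exactly zero — a vertex-counting argument showing no permutation can align the two unions — so these numerous terms contribute non-positively to the variance; and (b) for doubly-overlapping patterns, prove a quantitative bound in which the controlling quantity is not merely "intersections of trees are forests'' but the expected count of the rarest subgraph, $\Phi_\calT=n\min\{(nq)^K,1\}$, with the correlation across tree counts bounded by roughly $2^{O(K)}(K+1)^{O(K)}/\sqrt{\Phi_\calT}$, followed by the bookkeeping that sums these bounds over the $e^{\Theta(K)}$ trees and all overlap sizes. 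The stated range of $K$ enters precisely here: $K\le\frac{\log n}{2\log(1/(nq))}$ guarantees $\Phi_\calT\ge\sqrt n$, and $K\le\frac{\log n}{16\log\log n}$ guarantees $\rho^{-2K}(K+1)^{5K+5}2^{O(K)}=o(\sqrt n)$; your proposal gestures at "absorbed using the $K$-range'' but supplies neither the mechanism nor the verification. A further omission: the moment bounds are proved under $q\le\tfrac12$ and extended to $q>\tfrac12$ by passing to complement graphs, using $W_H(\bar{A^c})W_H(\bar{B^c})=W_H(\bar A)W_H(\bar B)$; some reduction of this kind is needed to obtain the stated condition in terms of $\min\{q,1-q\}$.
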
  

Note that the condition \prettyref{eq:K_condition} remains unchanged with $q$ replaced by $1-q$, as we can equivalently test the correlation between the complement graphs of the observed graphs, which follow the correlated \ER model with parameters $(n,1-q,\rho)$. 
The condition $nq \ge n^{-o(1)}$ in fact applies to the very sparse regime of vanishing average degrees, as long as they are slower than any polynomial in $n$. This sparsity condition turns out to be necessary for the proposed test to succeed. To see this, observe that the threshold for the emergence of trees
with $K$ edges in $\calG(n,q)$ is at $nq=\Theta(n^{-1/K})$~\cite[Corollary 2.7]{frieze2016introduction}. Thus to ensure the existence of trees
with $K=\omega(1)$ edges, we need $nq\ge n^{-o(1)}$.

From a computational perspective,  evaluating each $W_H(\bar{A})$ in \prettyref{eq:f} by exhaustive search  takes $n^{O(K)}$ time which is super-polynomial when $K=\omega(1)$.
To resolve this computational issue, 
 in \prettyref{sec:computation_color_coding}
we design 
an $n^{2+o(1)}$-time
algorithm
(see  \prettyref{alg:algorithm1}) to 
compute an approximation $\tilde f_\calT(A,B)$ (see \prettyref{eq:Y_calT})
for $f_{\calT}(A,B)$ using the strategy of \emph{color coding}~\cite{alon1995color,arvind2002approximation,hopkins2017bayesian}.
    The following result shows that 
    the statistic $\tilde f_\calT$ 
    achieves consistent detection under the same condition as in \prettyref{thm:low_degree}. 
    
  \begin{theorem}\label{thm:compution}
   Suppose \prettyref{eq:K_condition} holds.
Then
\prettyref{eq:fsuccess}
holds with $\tilde f_\calT$ in place of $f_\calT$, namely
   \begin{align}
   \label{eq:tildefsuccess}
      \calQ (\tilde f_{\calT}(A,B)\ge \tau )+  \calP (\tilde f_{\calT}(A,B)\le \tau )=o(1).
   \end{align} 
  Moreover, $\tilde f_{\calT}(A,B)$ can be computed 
  in  $n^{2+o(1)}$ time.
  \end{theorem}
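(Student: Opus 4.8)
The plan is to layer the statistical analysis behind \prettyref{thm:low_degree} onto a color-coding reduction, accounting for the extra randomness of the colorings via the law of total variance. Recall that $\tilde f_\calT(A,B)$ of \prettyref{eq:Y_calT} has the following shape: independently of $(A,B)$, draw $N=e^{\Theta(K)}$ i.i.d.\ pairs of uniformly random vertex colorings $(c^{(\ell)},c'^{(\ell)})$, $c^{(\ell)},c'^{(\ell)}\colon[n]\to[K+1]$ --- one coloring of $G_1$ and an independent coloring of $G_2$ per trial --- and set $\tilde f_\calT=\frac1N\sum_{\ell=1}^N g^{(\ell)}$, where, with $p_{\rm col}\triangleq(K+1)!/(K+1)^{K+1}$ and $W_H^{\rm col}(M,c)\triangleq\sum_{S\cong H:\,S\text{ colorful under }c}\prod_{(i,j)\in S}M_{ij}$,
\[
g^{(\ell)}\triangleq\sum_{[H]\in\calT}\beta\,\aut(H)\,p_{\rm col}^{-2}\,W_H^{\rm col}\big(\bar A,c^{(\ell)}\big)\,W_H^{\rm col}\big(\bar B,c'^{(\ell)}\big),
\]
each $W_H^{\rm col}(\bar M,c)$ being computed by \prettyref{alg:algorithm1}. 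Since a fixed copy of a $(K+1)$-vertex tree is colorful under a uniform $(K+1)$-coloring with probability exactly $p_{\rm col}$, one has $\Expect_c[W_H^{\rm col}(\bar A,c)\mid A]=p_{\rm col}W_H(\bar A)$; as the colorings of $G_1$ and $G_2$ are independent, this gives $\Expect[g^{(\ell)}\mid A,B]=f_\calT(A,B)$ and hence
\[
\Expect[\tilde f_\calT\mid A,B]=f_\calT(A,B),\qquad \Expect_\calQ[\tilde f_\calT]=0,\qquad \Expect_\calP[\tilde f_\calT]=\Expect_\calP[f_\calT]=\rho^{2K}|\calT|.
\]
So the threshold $\tau=C\rho^{2K}|\calT|$ is still the correct one, and by Chebyshev's inequality \prettyref{eq:tildefsuccess} follows once we show that $\Var_\calQ[\tilde f_\calT]$ and $\Var_\calP[\tilde f_\calT]$ are $o\big((\Expect_\calP f_\calT)^2\big)$.

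By the law of total variance, under either hypothesis,
\[
\Var[\tilde f_\calT]=\Var\big(\Expect[\tilde f_\calT\mid A,B]\big)+\Expect\big[\Var(\tilde f_\calT\mid A,B)\big]=\Var[f_\calT]+\frac1N\,\Expect\big[\Var_{c,c'}(g^{(1)}\mid A,B)\big].
\]
The first term is exactly what the proof of \prettyref{thm:low_degree} controls: under $\calQ$ it equals $(\Expect_\calP f_\calT)^2/(\rho^{2K}|\calT|)=o\big((\Expect_\calP f_\calT)^2\big)$ by \prettyref{eq:otter} and $\rho^2>\alpha$, and under $\calP$ it is $o\big((\Expect_\calP f_\calT)^2\big)$ by the covariance analysis there. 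For the second term, condition on $(A,B)$, use $c\perp c'$, and expand $\Var_{c,c'}(g^{(1)}\mid A,B)$ in terms of the conditional covariances
\[
\Cov_c\big(W_H^{\rm col}(\bar A,c),W_{H'}^{\rm col}(\bar A,c)\big)=\sum_{\substack{S\cong H,\ S'\cong H'\\ S\cap S'\ne\emptyset}}\big(\Prob_c\{S,S'\text{ both colorful}\}-p_{\rm col}^2\big)\prod_{e\in S}\bar A_e\prod_{e'\in S'}\bar A_{e'},
\]
where vertex-disjoint pairs drop out and the joint colorfulness probabilities are at most $p_{\rm col}$. Taking $\Expect_\calP[\cdot]$, this splits into (i) a \emph{diagonal} part, coming from $S=S'$ with $H\cong H'$, which equals (up to lower-order factors) $p_{\rm col}^{-2}\Var_\calQ[f_\calT]=e^{\Theta(K)}\rho^{2K}|\calT|$, and (ii) \emph{overlap} parts, from distinct but intersecting copies, each at most $e^{O(K)}$ times a ``correction term'' already bounded in the proof of \prettyref{thm:low_degree}. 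For (i), dividing by $N=e^{\Theta(K)}$ leaves $O(\rho^{2K}|\calT|)=o\big((\rho^{2K}|\calT|)^2\big)$, since $\rho^{2K}|\calT|\to\infty$. For (ii), an intersecting pair of copies shares a vertex, which costs a factor $n^{-\Omega(1)}$, while the combinatorial cost of the overlap is at most $K^{O(K)}$; since \prettyref{eq:K_condition} gives $K\log K\le\frac1{16}\log n$, the inflation $e^{O(K)}K^{O(K)}$ is a power of $n$ with a controllably small exponent, dominated by the polynomial-in-$n$ slack built into the bounds of \prettyref{thm:low_degree} (this slack is precisely the reason for the constants $16$ and $2$ in \prettyref{eq:K_condition}). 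Hence $\Var[\tilde f_\calT]=o\big((\Expect_\calP f_\calT)^2\big)$ under both hypotheses, and \prettyref{eq:tildefsuccess} follows from Chebyshev's inequality just as in \prettyref{thm:low_degree}.

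For the running time, each $W_H^{\rm col}(\bar M,c)$ is evaluated by the classical color-coding dynamic program: root $H$, and for every node $v$ of $H$, every graph vertex $u\in[n]$, and every color set $C\subseteq[K+1]$ with $|C|=|V(H_v)|$ (where $H_v$ is the subtree of $H$ rooted at $v$), store the $\bar M$-weighted number of colorful embeddings of $H_v$ that send $v\mapsto u$ and use exactly the colors in $C$; the children of $v$ are combined one at a time by summing over how the color set $C$ is split among them, together with a sum over the edge of $K_n$ incident to $u$. This costs $2^{O(K)}\,\mathrm{poly}(K)\,|E(K_n)|=2^{O(K)}\,\mathrm{poly}(K)\,n^2$ per pair $(H,c)$, hence $2^{O(K)}\,\mathrm{poly}(K)\,n^2\cdot|\calT|\cdot N$ in total. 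Since $|\calT|=(1/\alpha+o(1))^K$ by \prettyref{eq:otter} and $N=e^{\Theta(K)}$, and since $K\le\frac{\log n}{16\log\log n}$ forces $2^{O(K)}\mathrm{poly}(K)\,|\calT|\,N=e^{O(K)}=n^{O(1/\log\log n)}=n^{o(1)}$, the overall running time is $n^{2+o(1)}$.

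The main obstacle is part (ii) above: one must carry out the second-moment analysis of the randomized estimator, i.e.\ re-run the delicate overlap computation behind \prettyref{thm:low_degree} --- summing over pairs of trees in $\calT$ and pairs of their intersecting copies and invoking the tree property (cf.\ \prettyref{rmk:prop2}) --- now with the extra joint-colorfulness weights, and verify that the resulting $e^{O(K)}K^{O(K)}$ inflation indeed stays below the polynomial-in-$n$ smallness guaranteed by \prettyref{eq:K_condition}. Everything else --- unbiasedness given $(A,B)$, the choice $N=e^{\Theta(K)}$ to suppress the diagonal term, the correctness and $n^{2+o(1)}$ cost of the dynamic program, and the concluding Chebyshev step --- is routine given the tools already developed.
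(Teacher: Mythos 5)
Your overall route is the same as the paper's: show the color-coded statistic is conditionally unbiased given $(A,B)$, apply the law of total variance so that the first term is exactly the variance already controlled in \prettyref{thm:low_degree}, bound the expected conditional variance over the colorings by re-using the overlap analysis under $\calP$, conclude by Chebyshev, and obtain the runtime from the $O(K3^Kn^2)$-per-(tree, coloring) dynamic program times $|\calT|\cdot e^{O(K)}$ colorings, which is $n^{o(1)}$ under \prettyref{eq:K_condition}. Two caveats. First, the statistic you analyze is not the one defined in \prettyref{eq:Y_calT_def} and \prettyref{eq:ftilde}: there $\tilde f_\calT$ is $\beta/r^2$ times a sum over $[H]\in\calT$ of the \emph{product of two averages} $\bigl(\tfrac1t\sum_i X_H(\bar A,\mu_i)\bigr)\bigl(\tfrac1t\sum_j X_H(\bar B,\nu_j)\bigr)$ with $t=\lceil 1/r\rceil$ colorings per graph, i.e.\ an average over all $t^2$ cross pairs $(i,j)$, not a diagonal average over $N=e^{\Theta(K)}$ matched coloring pairs. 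Your variant is also a legitimate estimator (provided $N\gtrsim r^{-2}$) and the asymptotics are unchanged, but to prove the theorem as stated you must control the cross-index covariances $\Cov(X^{ij}_\calT,X^{i'j'}_\calT\mid A,B)$ with $i=i'$ or $j=j'$; this is precisely what \prettyref{lmm:er_expect_var_A_B} does, yielding the factor $r^{2+\indc{i\neq i'}+\indc{j\neq j'}}-r^4$ whose average over the $t^4$ index tuples is $o(r^4)$ exactly because $t=\lceil 1/r\rceil$. Second, the step you defer as the ``main obstacle'' is cleaner than your sketch suggests: once the joint-colorfulness probabilities are bounded by $r^{1+\indc{i\neq i'}}$ (resp.\ $r^{1+\indc{j\neq j'}}$) and pulled out, and once the single-overlap terms are killed by \prettyref{eq:L_phi_S_T_not_disjoint-1}, the remaining sum over doubly overlapping pairs is exactly the term $\termII$ in \prettyref{eq:ER_varP_termII}, which \prettyref{prop:ER_var_calP} has already shown to be $o\bigl((\Expect_\calP[f_\calT])^2\bigr)$; no additional $e^{O(K)}K^{O(K)}$ inflation needs to be absorbed by slack in \prettyref{eq:K_condition}, so the verification you flag is immediate rather than a re-run of the delicate overlap computation.
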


  \begin{remark}[Comparison to  statistical  limit]
  It is instructive to compare the performance guarantee of 
  our polynomial-time algorithm with the detection threshold derived in~\cite{wu2020testing}.
  Recall the equivalence between the correlated \ER graph model $(n,q,\rho)$ and the subsampling model $(n,p,s)$, when $q=ps$
  and $\rho=\frac{s(1-p)}{1-ps}>0$. As shown by~\cite{wu2020testing}, 
  in the dense
  regime with $n^{-o(1)} \le p \le 1-\Omega(1)$, 
  the information-theoretic threshold for detection is $s^2= \frac{2 \log n}{np \left(\log (1/p)-1+p \right)}$ with a sharp constant $2$,
  whereas in the sparse regime with $p=n^{-\Omega(1)}$, 
  the detection threshold is $s^2 \asymp \frac{1}{np} \wedge 0.01$. 
Therefore, when $n^{-o(1)} \le np \le O(1)$, our test statistic 
  $\tilde f(A,B)$ achieves the information-theoretic detection threshold up to a 
  constant factor in polynomial time. Interestingly, when $n^{-o(1)} \le np<2/\alpha$, the requirement $\rho^2 > \alpha$ of our test statistic becomes even less stringent than the existing performance guarantee for the maximum correlation test, which computes the maximum number of common edges over all possible vertex correspondences and demands $s^2 >\frac{2 \log n}{np\log (1/p)}$~\cite{wu2020testing}. However, when $np=\omega(1)$, our performance guarantee is far away from the information-theoretic detection threshold. Whether or not the detection threshold can be achieved up to a constant factor in polynomial time when $np =\omega(1)$ remains an open problem. 
  
  \end{remark}

\subsection{Low-degree polynomial approximation of likelihood ratio}
\label{sec:f_test}
    
    Our tree-counting statistic can also be interpreted as a low-degree polynomial approximation of the (optimal) likelihood ratio test. 
    Specifically,  the likelihood ratio between $\calP$ and $\calQ$ is  given by
    $$
    L(A,B)=\frac{\calP(A,B)}{\calQ(A,B)} = \Expect_\pi\left[\frac{\calP(A,B|\pi)}{\calQ(A,B)}\right],
    $$ 
    where the latent $\pi$ is uniform over $\calS_n$, the set of all permutations on $[n]$. As such, directly computing this average over $n!$ permutations is computationally intractable. To obtain a computationally efficient test, as discussed in \cite{hopkins2017bayesian,hopkins2018statistical,kunisky2019notes}, one approach is via
    projecting $L(A,B)$ onto the space of low-degree polynomials. To this end, 
    define an inner product 
    $$
    \langle f,g\rangle \triangleq \Expect_{\calQ} \left[f(A,B)g(A,B)\right]
    $$
    for any  functions $f,g:\{0,1\}^{2\binom{n}{2}} \to \reals$. 
     Next, we introduce an orthonormal 
    polynomial basis under $\calQ$, indexed by subsets of $\binom{[n]}{2}$ or equivalently edge-induced subgraphs of $K_n$.
		Throughout the paper we write as shorthand
		\[
		\sfS \triangleq (S_1 , S_2), \quad S_1, S_2 \subset \binom{[n]}{2}.
		\]
		Define
    \begin{align}
    \phi_\sfS(A,B) \triangleq \sigma^{-|S_1|-|S_2|}\prod_{(i,j)\in S_1}\bar{A}_{ij} \prod_{(k,\ell)\in S_2}\bar{B}_{k\ell}  \, , \label{eq:phi_S}
    \end{align}
    where $\sigma^2 \triangleq q(1-q)$ is the variance of $A_{ij}$ (resp.\ $B_{ij}$) for any $(i,j)\in \binom{[n]}{2}$.
    Then, $\{\phi_{\sfS}\}_{S_1,S_2 \subset \binom{[n]}{2}}$
		is a Fourier basis for functions on the hypercube $\{0,1\}^{2\binom{n}{2}}$ \cite{o2014analysis}. In particular, we have 
    $
    \Expect_{\calQ}\left[\phi_\sfS\right]=0,
    $
    and 
 $   \left \langle \phi_\sfS, \phi_{\sfT} \right \rangle =\indc{\sfS=\sfT}, $
    for any $S_1,S_2,T_1,T_2 \subset\binom{[n]}{2}$. 
    Moreover, note that $\phi_\sfS$ is a degree-$|\sfS|$ polynomial of the entries of $A$ and $B$, where $|\sfS| \triangleq |S_1| + |S_2|$. 

It turns out that our tree-counting statistic $f_{\calT}(A,B)$ 
corresponds to the projection of $L(A,B)$ to the space spanned by the bases $\{\phi_\sfS\}$, where $\sfS$ consists of $S_1$ and $S_2$ that are both trees with $K$ edges. More formally, for each  collection $\calH$ of unlabeled graphs containing no isolated vertex,\footnote{Throughout the paper, unless otherwise stated, all subgraphs are  edge-induced subgraphs and  contain no isolated vertices.} we define 
\begin{align}
f_{\calH}(A,B) 
         \triangleq \sum_{[H]\in \calH}  \sum_{S_1:S_1 \cong H} \sum_{S_2: S_2 \cong H} \left\langle L,\phi_\sfS\right \rangle \phi_\sfS  \label{eq:f_calH}  .
\end{align}
We will verify in \prettyref{sec:low_degree_analysis} that
the definition of $f_{\calT}$ given in~\prettyref{eq:f} is equivalent to that in~\prettyref{eq:f_calH}. 
    
    

 \subsection{Computational hardness conjecture based on low-degree approximation} \label{sec:hardness}
 Equipped with the view of the low-degree approximation, our result  also sheds light on the computational limits of polynomial-time algorithms. 
 Going beyond our tree-counting statistic, it is natural to consider the ``optimal'' degree-$2K$ polynomial of $(A,B)$, that is,
 $$
    f^* 
    = \argmax_{f: \text{ degree$(f)$}\le 2 K}  \frac{\Expect_{\calP}\left[f\right]}{\sqrt{\Expect_{\calQ}\left[f^2\right]}}
    = \argmax_{f: \text{ degree$(f)$}\le 2 K}  \frac{\langle L, f \rangle}{\sqrt{\langle f, f \rangle}},
    $$
    where $\frac{\Expect_{\calP}\left[f\right]}{\sqrt{\Expect_{\calQ}\left[f^2\right]}}$ can be viewed as the signal-to-noise ratio for the test statistic  $f$. By the Cauchy-Schwarz inequality,
    we readily get that $f^*=f_{\calH^*}(A,B)$, where 
    \begin{align}
     \calH^* \triangleq \left\{\left[H\right]: |E(H)| \le K, \text{ $H$ contains no isolated vertex}\right\}.
    \label{eq:def_H_star}
    \end{align}
    Thus $f^*$ corresponds to the statistic by counting all subgraphs with $K$
    edges and no isolated vertex. 

 It is postulated in \cite{hopkins2018statistical,kunisky2019notes} that, if the signal-to-noise ratio $\frac{\Expect_{\calP}\left[f^*\right]}{\sqrt{\Expect_{\calQ}\left[(f^*)^2\right]}}$ stays bounded for $K = \mathrm{polylog}(n)$ as $n \to \infty$, then no polynomial-time algorithm can distinguish between $\calP$ and $\calQ$ with vanishing error. 
Our result in \prettyref{prop:mean_p_q_var_q} shows that 
$$
\frac{\Expect_{\calP}\left[f^*\right]}{\sqrt{\Expect_{\calQ}\left[(f^*)^2\right]}}
= \pth{ \sum_{[H]\in  \calH^*}  \rho^{2 |E(H)|} }^{1/2} . 
$$
Since an unlabeled graph $[H] \in \calH^*$ with $k$ edges has at most $2k$ vertices, 
the number of such graphs is at most $\binom{(2k)^2}{k} \le (4ek)^k$ and hence
$$
\sum_{[H]\in  \calH^*}  \rho^{2 |E(H)|}
\le \sum_{k=1}^K (4ek)^k \rho^{2 k} 
= O(1) 
$$
for $K = \mathrm{polylog}(n)$, provided that $\rho^2 \le \frac{1}{\mathrm{polylog}(n)}$. 
Therefore, if the squared correlation $\rho^2$  is smaller than $\frac{1}{\mathrm{polylog}(n)}$, then the  signal-to-noise ratio for any degree-$\mathrm{polylog}(n)$ polynomial test is bounded, in which case the testing problem is conjectured to be computationally hard. 
In view of the close connection between hypothesis testing and estimation, we further conjecture the graph matching problem (namely, recovering the latent permutation $\pi$ under the correlated \ER model $\calG(n,q,\rho)$)
is computationally hard when $\rho^2 \le \frac{1}{\mathrm{polylog}(n)}$. Note that these conjectures are consistent with the state-of-the-art results for which no polynomial-time test or matching algorithm is known when $\rho^2 \le \frac{1}{\mathrm{polylog}(n)}$ (cf.~\cite{ding2018efficient,
FMWX19a,FMWX19b,
ganassali2020tree,ganassali2021correlation,mao2021random,mao2021exact} and the present paper). 
An intriguing numerical coincidence was recently reported in \cite[Fig.~9]{piccioli2021aligning},
leading to a speculation that $\rho^2>\alpha$ is the computational limit of both detection and recovery of graph matching when $nq \diverge$.

\subsection{Further related work}
Cycle counting has been widely used for hypothesis testing in networks with latent structures. 
In the context of community detection, 
counting 
cycles of logarithmic length
has been shown to achieve the optimal detection threshold for distinguishing the stochastic block model (SBM) with two symmetric communities from the \ER graph model in the sparse regime \cite{mossel2015reconstruction}. 
In the dense regime, 
counting \emph{signed} cycles turns out to achieve the optimal asymptotic power \cite{banerjee2018contiguity,banerjee2017optimal}. These results have been extended to the degree-corrected SBM in \cite{gao2017testing1,gao2017testing,jin2019optimal}, which focus on the asymptotic normality of cycle counts of constant length.

The importance of counting signed subgraphs was recognized in \cite{bubeck2016testing}, which showed that counting signed triangles 
is nearly optimal for testing high-dimensional random geometric graphs versus \ER random graphs in the dense regime.
Indeed, the variance of signed triangles can be dramatically smaller than the variance of triangles, due to the cancellations introduced by the centering of adjacency matrices.  

While we also focus on counting signed subgraphs in this paper, importantly, our approach deviates from the existing literature on cycles in the following two aspects. First, counting trees 
turns out to be much more powerful than counting cycles for detecting network correlation. 
A simple explanation is that the class of cycles is not rich enough: There are exponentially many unlabeled $K$-trees but only one $K$-cycle.
Second, in the aforementioned literature on the SBM, 
when the graphs are sufficiently sparse, one can simply count the usual subgraphs as opposed to their signed version; however, for our problem, even when the average degree  $np$ is as small as a constant $c>1$, 
it is still more advantageous to center the adjacency matrices and count
 signed trees, which helps mitigate the  correlations across counts of non-isomorphic trees. 

Let us compare the algorithmic approach in the present paper with the existing literature. Computationally, to count (signed) subgraphs of size $K$ in graphs with $n$ vertices, the na\"ive exhaustive search takes $O(n^K)$ time which is not polynomial in $n$ if $K\to\infty$. This difficulty was overcome in the previous work on the SBM as follows:
In the sparse regime, 
\cite{mossel2015reconstruction} showed that local neighborhoods of \ER and SBM graphs are tangle-free with high probability so that  
cycles of logarithmic sizes can be counted efficiently by
counting the number of non-backtracking walks. In the dense regime, \cite{banerjee2018contiguity,banerjee2017optimal} showed that with high probability counts of signed cycles of growing lengths
can be approximated by certain linear spectral statistics of the standardized adjacency matrix, which can be efficiently computed using the eigendecomposition.
In our paper, we take a different route  following~\cite{hopkins2017bayesian} by showing that with high probability our test statistic involving counting signed trees can be efficiently approximated in both the dense and sparse regimes, using the randomized algorithm of color coding.

In passing, 
we remark that the recent work \cite{ganassali2021correlation} studies a related problem on detecting whether two Galton-Watson trees are either independently or  correlatedly generated.
A recursive message passing algorithm is proposed to compute the likelihood ratio and
a set of necessary and sufficient conditions 
for achieving the so-called one-sided detection (type-I error is $o(1)$ and type-II error is $1-\Omega(1)$) 
is obtained. 
Furthermore, these results are used to construct an efficient algorithm for partially aligning two correlated \ER graphs
by testing the correlation of local neighborhoods, which can be approximately viewed as Galton-Watson trees in the sparse regime.

\section{Statistical analysis of tree counting}\label{sec:stats}

In this section, we establish the statistical guarantee of our tree-counting statistic $f_{\calT}(A,B)$, as stated in~\prettyref{thm:low_degree}. To this end, in~\prettyref{sec:low_degree_analysis} we first show that $f_{\calT}(A,B)$ can be equivalently rewritten as a low-degree test defined in~\prettyref{eq:f_calH}. Then in~\prettyref{sec:mean_variance}, we bound the mean and variance of $f_{\calH}$ for a general family $\calH$ of subgraphs. 
By specializing these bounds to trees, we finally prove the
statistical guarantee for $f_{\calT}(A,B)$ and hence the desired~\prettyref{thm:low_degree}. 




\subsection{Equivalence to low-degree test} \label{sec:low_degree_analysis}

    We now compute the low-degree projection of the likelihood ratio $L=\frac{d\calP}{d\calQ}$ using the orthonormal basis $\{\phi_\sfS\}_{S_1,S_2\subset \binom{[n]}{2}}$. 
    First, the coefficient of $L$ along the basis function $\phi_\sfS$ is 
    \begin{align*}
    \left\langle L, \phi_\sfS\right \rangle  
    = \Expect_{\calP}\left[\phi_\sfS(A,B)\right] 
    = \Expect_{\pi}\Expect_{\calP|\pi}\left[\sigma^{-|S_1|-|S_2|}\prod_{(i,j)\in S_1}\bar{A}_{ij}\prod_{(k,\ell)\in S_2} \bar{B}_{k\ell}\right] ,
    \end{align*}
    where $\sigma^2=q(1-q)$. Recall that when $\calP$ is the correlated \ER graph model $\calG(n,q,\rho)$, $\{(A_{ij}, B_{\pi(i)\pi(j)})\}$ are i.i.d pairs of $\Bern(q)$ random variables with correlated coefficient $\rho$, so that
		\[
		\Expect_\calP[\bar{A}_{ij}\bar{B}_{\pi(i)\pi(j)}]= \rho \sigma^2.
		\]
		Therefore, the inner expectation vanishes if $\pi(S_1) \ne S_2$ where $\pi(S_1) \triangleq \{(\pi(i),\pi(j)): (i,j) \in S\}$,
		 and is equal to $\rho^{|S_1|}$ if $\pi(S_1) = S_2$. Consequently, we have 
    \begin{align*}
    \left\langle L, \phi_\sfS\right \rangle  
    = \rho^{|S_1|}\Prob\left(\pi(S_1)=S_2\right) . 
    \end{align*}
    For any graph $H$, define
    \begin{align}
        a_H \triangleq \rho^{|E(H)|} \frac{1}{\sub_n(H)} \, . \label{eq:a_H}
    \end{align}
     If $S_1\cong S_2\cong H$ for some $H$,  then $\Prob\left(\pi(S_1)=S_2\right)=\frac{1}{\sub_n(H)}$ (see \prettyref{lmm:O}\ref{O:1} in \prettyref{app:prelim}).
		In all, we have 
    \begin{align}
        \left\langle L, \phi_\sfS\right \rangle  = 
				\begin{cases}
				a_H 	&  S_1\cong S_2\cong H\\
				0 & S_1\not\cong S_2
				\end{cases}
				\label{eq:L_phi_s}.
    \end{align}
    By \prettyref{eq:f_calH} and \prettyref{eq:L_phi_s}, we have
    \begin{align}
        f_{\calH}(A,B) 
        & = \sum_{[H]\in \calH} a_H  \sum_{S_1 \cong H} \sum_{S_2 \cong H} \phi_\sfS  \label{eq:f_calH_1}   \\
				& =  \sum_{[H]\in \calH} a_H \sigma^{-2|E(H)|} \underbrace{\sum_{S_1 \cong H}  \prod_{(i,j)\in S_1} \bar{A}_{ij}}_{W_H(\bar A)}  \underbrace{\sum_{S_2 \cong H} \prod_{(k,\ell)\in S_2}\bar{B}_{k\ell}}_{W_H(\bar B)}. \label{eq:f_calH_2}
    \end{align}
Recall that $\calT$ is the set of unlabeled trees with $K$ edges. 
For any $[H] \in \calT$, by \prettyref{eq:subH},
$$
\sub_n(H) = \binom{n}{K+1} \frac{(K+1)!}{\aut(H)},
$$
so 
$ a_H \sigma^{-2|E(H)|} = \rho^K \sigma^{-2K} \frac{(n-K-1)!}{n!} \aut(H) 
= \left(\frac{\rho}{q(1-q)}\right)^K \frac{(n-K-1)!}{n!} \aut(H)$. 
Therefore, by 
\prettyref{eq:f_calH_2}, we obtain the equivalence of the definition of
$f_{\calT}$ given in  \prettyref{eq:f} and
that in 
\prettyref{eq:f_calH}.

\subsection{Mean and variance calculation}
\label{sec:mean_variance}
In this section, we compute the mean and variance of the
test statistic $f_{\calH}$ for a general family of unlabeled graphs $\calH \subset \calH^*$, 
where $\calH^*$ 
is defined in \prettyref{eq:def_H_star}. 

\begin{proposition}\label{prop:mean_p_q_var_q}
For any subfamily $\calH \subset \calH^*$, 
   \begin{align}
        \Expect_{\calQ}\left[f_{\calH}\right]
        & = 0, \label{eq:erqmean} \\
    \Expect_{\calP}\left[f_{\calH}\right]
        & =   \Var_{\calQ}\left[f_{\calH}\right] = \sum_{[H]\in  \calH}  \rho^{2 |E(H)|}. \label{eq:erpmean_qvar}
    \end{align}
It follows that if $\calH$ consists of graphs with $K$ edges and 
    \begin{align}
        \label{eq:prop1} |\calH| = \omega \left( \frac{1}{\rho^{2K}} \right), 
    \end{align}
    then $\Expect_\calP[f_\calH] 
    =\omega(1)$.
\end{proposition}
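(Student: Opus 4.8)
The plan is to read everything off the expansion of $f_{\calH}$ in the orthonormal Fourier basis $\{\phi_\sfS\}$, which is exactly the form \prettyref{eq:f_calH_1}, namely $f_{\calH} = \sum_{[H]\in\calH} a_H \sum_{S_1\cong H}\sum_{S_2\cong H}\phi_\sfS$, together with the two facts that under $\calQ$ each $\phi_\sfS$ has mean zero (whenever $|\sfS|\ge 1$) and that $\langle \phi_\sfS,\phi_\sfT\rangle = \indc{\sfS=\sfT}$. Since $\calH\subset\calH^*$ forces every $[H]$ to contain at least one edge, every $\sfS=(S_1,S_2)$ appearing in the sum satisfies $|\sfS|\ge 2$, so $\Expect_\calQ[\phi_\sfS]=0$ and linearity of expectation gives $\Expect_\calQ[f_\calH]=0$, which is \prettyref{eq:erqmean}.

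For \prettyref{eq:erpmean_qvar} I would compute the two quantities separately and observe that they collapse to the same sum. First, because the index set $\{(S_1,S_2): S_1\cong H,\ S_2\cong H,\ [H]\in\calH\}$ lists each pair $\sfS$ at most once — the isomorphism type $[H]$ being determined by $S_1$ — the representation of $f_\calH$ is an orthogonal decomposition, so Parseval gives $\Var_\calQ[f_\calH]=\Expect_\calQ[f_\calH^2] = \sum_{[H]\in\calH} a_H^2\, |\{(S_1,S_2): S_1\cong S_2\cong H\}| = \sum_{[H]\in\calH} a_H^2\,\sub_n(H)^2$. Substituting $a_H=\rho^{|E(H)|}/\sub_n(H)$ from \prettyref{eq:a_H} reduces this to $\sum_{[H]\in\calH}\rho^{2|E(H)|}$. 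Second, since $L=d\calP/d\calQ$, we have $\Expect_\calP[\phi_\sfS]=\Expect_\calQ[L\phi_\sfS]=\langle L,\phi_\sfS\rangle$, which by \prettyref{eq:L_phi_s} equals $a_H$ when $S_1\cong S_2\cong H$ and $0$ otherwise; summing over the same index set gives $\Expect_\calP[f_\calH] = \sum_{[H]\in\calH} a_H^2\,\sub_n(H)^2$, the identical expression. (Equivalently, one reads off from \prettyref{eq:f_calH} that $\Expect_\calP[f_\calH]$ is the squared $\calQ$-norm of the degree-restricted projection of $L$, hence $\sum_\sfS\langle L,\phi_\sfS\rangle^2$.) This establishes both identities.

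The final assertion is then immediate: when every $[H]\in\calH$ has exactly $K$ edges, \prettyref{eq:erpmean_qvar} becomes $\Expect_\calP[f_\calH]=|\calH|\,\rho^{2K}$, so the hypothesis $|\calH|=\omega(\rho^{-2K})$ yields $\Expect_\calP[f_\calH]=\omega(1)$. I do not anticipate any genuine obstacle in this argument — it is entirely bookkeeping against the orthonormal basis — and the only point that deserves an explicit line of justification is that the double sum over $(S_1,S_2)$ is truly an orthogonal decomposition, i.e. that no basis function $\phi_\sfS$ is counted under two different $[H]\in\calH$, which holds because $S_1$ already pins down the isomorphism class of $H$.
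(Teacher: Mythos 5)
Your proposal is correct and follows essentially the same route as the paper: mean zero under $\calQ$ by linearity since each $\phi_\sfS$ with $|\sfS|\ge 1$ is centered, then $\Expect_\calP[f_\calH]$ by change of measure via $\langle L,\phi_\sfS\rangle=a_H$, and $\Var_\calQ[f_\calH]$ by orthonormality of the basis (the paper records your "no double-counting" observation as the step that $\sfS=\sfT$ forces $[H]=[I]$), with the final claim following by substituting $a_H=\rho^{|E(H)|}/\sub_n(H)$. No gaps.
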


  \begin{proof}
		First, we calculate the mean of $f_{\calH}$ under $\calQ$ and  $\calP$.
    Since $\Expect_{\calQ}[\phi_\sfS]=0$ for each $\sfS=(S_1,S_2)$, we have by linearity
		$\Expect_{\calQ}\left[f_{\calH}\right]=0$.
    Since $L=\frac{d\calP}{d\calQ}$, by a change of measure and using the expression of $f_{\calH}$ in \prettyref{eq:f_calH_1}, 
    \begin{align*}
        \Expect_{\calP}\left[f_{\calH}\right]
        & = \sum_{[H]\in  \calH} a_H \sum_{S_1\cong H} \sum_{S_2\cong H} \left\langle L,\phi_\sfS\right \rangle \overset{(a)}{=} \sum_{[H]\in  \calH} a_H^2 \sub_n(H)^2  \overset{(b)}{=} \sum_{[H]\in  \calH}  \rho^{2 |E(H)|}, 
    \end{align*}
    where $(a)$ follows from \prettyref{eq:L_phi_s} and the fact
    that 
    $\sum_{S_1\cong H} 1 = \sub_n(H)$
    as $H$ does not contain any isolated vertex;
    $(b)$ holds because $a_H = \rho^{|E(H)|} \frac{1}{\sub_n(H)}$.
    Moreover,
     \begin{align*}
        \Var_{\calQ}\left[f_{\calH}\right]
        & =\Expect_{\calQ}\left[f_{\calH}^2\right] = \left\langle f_{\calH}, f_{\calH} \right \rangle  \\
        &= \sum_{[H]\in \calH} a_H \sum_{[I]\in \calH} a_I \sum_{S_1\cong H} \sum_{S_2\cong H} \sum_{T_1\cong I}\sum_{T_2\cong I} \iprod{\phi_\sfS}{\phi_\sfT} \\
        &\overset{(a)}= \sum_{[H]\in \calH}a_H \sum_{[I]\in \calH} a_I \sum_{S_1\cong H} \sum_{S_2\cong H} \sum_{T_1\cong I}\sum_{T_2\cong I} \indc{\sfS = \sfT} \\
        & \overset{(b)}{=}  \sum_{[H]\in \calH} a_H^2 \sub_n(H)^2  
        = \sum_{[H] \in \calH} \rho^{2 |E(H)|} , 
        \end{align*}
        where $(a)$ holds because $\left\{\phi_{\sfS}\right\}_{S_1,S_2\subset \binom{[n]}{2}}$ are orthonormal;
        $(b)$ holds because $\sfS = \sfT$ implies that $[H]=[I]$ and $a_H=a_I$. 
\end{proof}



 The following result bounds the variance of $f$ under the alternative hypothesis $\calP$. 
\begin{proposition}\label{prop:ER_var_calP}
Assume $q\le \frac{1}{2}$. 
Let $\calH$ denote a subset of connected graphs with $K$ edges and at most $\NumVer$ vertices. 
Define 
\begin{equation}
\Phi_{\calH} \triangleq\min_{[H]\in\calH} \Phi_H
, \quad \text{ where }
\Phi_H \triangleq \min_{J \subset H: |V(J)| \ge 1}  n^{ |V(J)|} q^{|E(J)| },
\label{eq:PhiH}
\end{equation}
where the minimum is taken over all subgraphs $J$ of $H$ containing at least one vertex. 
%
 If in addition to \eqref{eq:prop1}, 
  \begin{equation}
 \label{eq:prop2}
    \Phi_{\calH} 
     = \omega \left( \rho^{-2K} M^{5M} 2^{5M+ 2K}\right), 
 \end{equation}
 then $
        \Var_{\calP}\left[f_{\calH}\right]/
        \left(\Expect_{\calP}\left[f_{\calH}\right]\right)^2 =  o\left(1 \right).
   $
Moreover,  for trees, \ie, $\calH=\calT$,
both~\prettyref{eq:prop1} and~\prettyref{eq:prop2} hold under assumption~\prettyref{eq:K_condition}.
\end{proposition}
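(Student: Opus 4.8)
The proof naturally divides into the general variance bound under \eqref{eq:prop1}--\eqref{eq:prop2} and its verification for $\calH=\calT$ under \eqref{eq:K_condition}.

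\textbf{Reduction and expansion.} For the general bound, I would compute $\Var_\calP[f_\calH]=\Expect_\calP[f_\calH^2]-(\Expect_\calP[f_\calH])^2$ directly from \eqref{eq:f_calH_2}. The first observation is that $W_H(\bar B^\pi)=W_H(\bar B)$ for every permutation $\pi$ (a relabelling preserves subgraph counts), so $f_\calH(A,B)=f_\calH(A,B^\pi)$ identically; consequently $f_\calH$ has the same law under $\calP$ as under the \emph{aligned} model, in which $\{(\bar A_{ij},\bar B_{ij})\}_{i<j}$ are i.i.d.\ centered pairs with $\Expect[\bar A_{ij}\bar B_{ij}]=\rho\sigma^2$, and in that model $\Expect[f_\calH]=\sum_{[H]\in\calH}\rho^{2K}$ by \prettyref{prop:mean_p_q_var_q}. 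Expanding the square in \eqref{eq:f_calH_2} and factoring the expectation over edges, $\Expect[f_\calH^2]$ becomes a sum over quadruples $(S_1,S_2,T_1,T_2)$ with $S_1\cong H\cong T_1$, $S_2\cong I\cong T_2$, $[H],[I]\in\calH$, whose summand is $\prod_e\mu(a_e,b_e)$ where $a_e=\indc{e\in S_1}+\indc{e\in S_2}$, $b_e=\indc{e\in T_1}+\indc{e\in T_2}$, and $\mu$ is the $3\times3$ joint-moment table of $(\bar A_{ij},\bar B_{ij})$: $\mu(0,0)=1$, $\mu(1,0)=\mu(0,1)=0$ (the point of centering), $\mu(1,1)=\rho\sigma^2$, $\mu(2,0)=\mu(0,2)=\sigma^2$, $\mu(2,1)=\mu(1,2)=(1-2q)\rho\sigma^2$, $\mu(2,2)=\sigma^4+(1-2q)^2\rho\sigma^2$. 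Thus only quadruples with $S_1\triangle S_2\subseteq T_1\cup T_2$ and $T_1\triangle T_2\subseteq S_1\cup S_2$ survive.

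\textbf{Main term, remainder, and the key estimate.} I would then single out the quadruples with $S_1=T_1$, $S_2=T_2$ and $E(S_1)\cap E(S_2)=\emptyset$: each has value $(\rho\sigma^2)^{2K}$, and using $a_H\sub_n(H)=\rho^K$ (from \eqref{eq:a_H}) together with the fact that two uniformly random copies share an edge with probability $O(K^2/n^2)$, these sum to $(\Expect_\calP[f_\calH])^2(1-o(1))$. Hence $\Var_\calP[f_\calH]$ equals, up to $o((\Expect_\calP f_\calH)^2)$, the sum over the remaining quadruples, which I would group by the isomorphism type of the decorated union $\Gamma=S_1\cup S_2\cup T_1\cup T_2$ --- a connected multigraph carrying, on each edge, the labels of the copies containing it. For each such type: the number of vertex-embeddings into $[n]$ is at most $n^{|V(\Gamma)|}$; each edge contributes at most $2\sigma^2$, with an extra factor $|\rho|$ whenever its label set is $(1,1)$ or $(2,1)$; and the number of types is at most $M^{5M}2^{5M+2K}$ by a crude count (at most roughly $M^M$ ways to glue the $\le 4M$ vertices of the four copies, times $2^{O(M+K)}$ further choices) --- this is exactly the source of that factor in \eqref{eq:prop2}. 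Comparing a non-main type against the main-term scale $n^{2(K+1)}(\rho\sigma^2)^{2K}$ and invoking $n^{|V(J)|}q^{|E(J)|}\ge\Phi_H\ge\Phi_\calH$ for every subgraph $J\subseteq H$ and every $[H]\in\calH$, one shows each type contributes $o\!\big((\Expect_\calP f_\calH)^2/(M^{5M}2^{5M+2K})\big)$ precisely when $\Phi_\calH=\omega(\rho^{-2K}M^{5M}2^{5M+2K})$, the factor $\rho^{-2K}$ being the worst-case loss when all $\mu(1,1)$-edges of the main term are replaced by $\sigma^2$-edges; summing over types yields the claim. (The ``Gaussian'' families, e.g.\ $S_1=S_2$ disjoint from $T_1=T_2$, contribute only $O(|\calH|\rho^{2K}+|\calH|\rho^{4K})$ and are absorbed by \eqref{eq:prop1}.) The step I expect to be the main obstacle is this type-by-type comparison in the \emph{sparse} regime, where a naive per-edge bound fails because ratios such as $\mu(2,2)/(\rho\sigma^2)^2\asymp(\rho q)^{-1}$ are large; the resolution is the tree property of \prettyref{rmk:prop2}: every subgraph $J$ of a tree is a forest, so $|E(J)|=|V(J)|-c(J)$ with $c(J)$ the number of components, which pins $n^{|V(J)|}q^{|E(J)|}$ down to at least $\Phi_H=n\min\{1,(nq)^K\}$ and limits how many vertices any surviving type can contract.

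\textbf{The tree case.} For $\calH=\calT$ we have $M=K+1$, and it remains to check \eqref{eq:prop1}--\eqref{eq:prop2}. Condition \eqref{eq:prop1} is immediate from Otter's estimate \eqref{eq:otter}: $|\calT|^{1/K}\to1/\alpha$, which exceeds $1/\rho^2$ since $\rho^2>\alpha$, so $|\calT|\rho^{2K}=(|\calT|^{1/K}\rho^2)^K\to\infty$. For \eqref{eq:prop2}, since every subgraph of a $(K+1)$-vertex tree is a forest with $|E(J)|=|V(J)|-c(J)$, the quantity $n^{|V(J)|}q^{|E(J)|}=n^{c(J)}(nq)^{|E(J)|}$ over non-empty $J$ is minimized at a single vertex (value $n$) when $nq\ge1$ and at $J=H$ (value $n(nq)^K$) when $nq<1$, so $\Phi_\calT=n\min\{1,(nq)^K\}$. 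Using $q\le\tfrac12$ (whence $\min\{q,1-q\}=q$), the second branch of \eqref{eq:K_condition} gives $K\log\tfrac1{nq}\le\tfrac12\log n$, so $\log\Phi_\calT\ge\tfrac12\log n$; meanwhile $\log\!\big(\rho^{-2K}M^{5M}2^{5M+2K}\big)=5(K+1)\log(K+1)+O(K)$, and the first branch of \eqref{eq:K_condition} gives $K\log K\le\tfrac1{16}\log n$, hence $\log\!\big(\rho^{-2K}M^{5M}2^{5M+2K}\big)\le(\tfrac5{16}+o(1))\log n$. Since $\tfrac12>\tfrac5{16}$, we get $\Phi_\calT\big/\big(\rho^{-2K}M^{5M}2^{5M+2K}\big)\ge n^{3/16-o(1)}\to\infty$, which is \eqref{eq:prop2}. (The constant $16$ in \eqref{eq:K_condition} is chosen precisely so that $\tfrac5{16}<\tfrac12$ with room to spare and so that the $\log\log n$ factor tames the $M^{5M}$ term, while the $2\log\tfrac1{nq}$ branch keeps $\Phi_\calT$ polynomially large.)
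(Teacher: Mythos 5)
Your proposal is correct in substance but organizes the argument differently from the paper. Your first move --- observing that $W_H(\bar B^\pi)=W_H(\bar B)$, hence $f_\calH(A,B)$ under $\calP$ has the same law as under the \emph{aligned} model --- is a genuine simplification the paper does not use: it removes the average over $\pi$ entirely, so the second moment factorizes edge-by-edge through the joint-moment table $\mu(a_e,b_e)$ (your table agrees with the paper's \prettyref{lmm:cond-exp}), the configurations with overlap on only one side are killed automatically by the survival condition (the paper instead proves $\langle L,\phi_\sfS\phi_\sfT\rangle=0$ in its case (ii) and uses a sign argument), and the no-overlap case needs no analogue of the $\gamma=\exp(M^2/(n-2M+1))$ correction coming from $\Prob(\pi(S_1\cup T_1)=S_2\cup T_2)$. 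The paper instead keeps $\pi$, proves pairwise bounds on $\langle L,\phi_\sfS\phi_\sfT\rangle$ split into no/single/double overlap (\prettyref{lmm:L_phi_S_T}), encodes the edge-coincidence blow-ups through $h(S,T)$ and $\Phi_S$ via \prettyref{eq:upper_bound_h} and the vertex-count inequality \prettyref{lmm:O}\ref{O:4}, and then sums over $\sfS,\sfT$ with \prettyref{lmm:bk}; these explicit pairwise bounds are reused later in the color-coding analysis, which your union-type route would not directly provide. Your identification of the mechanism is the right one --- vertex identifications cost powers of $n$, shared edges gain $q^{-1/2}$ or $q^{-1}$ in the moments, and $n^{|V(E)|}q^{|E|}\ge\Phi_\calH$ controls the trade, with $\rho^{-2K}$ as the worst-case loss --- and your verification of \prettyref{eq:prop1} and \prettyref{eq:prop2} for $\calH=\calT$ under \prettyref{eq:K_condition} (Otter for \prettyref{eq:prop1}; $\Phi_\calT=n\min\{1,(nq)^K\}\ge\sqrt n$ versus $5(K+1)\log(K+1)+O(K)\le(\tfrac{5}{16}+o(1))\log n$) matches the paper's.

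The place where real work remains is the step you phrase as ``one shows'': the per-type bound and the count of decorated union types. Your parenthetical justification of the type count (``$M^M$ ways to glue the $\le 4M$ vertices'') is not right as stated --- identifying up to $4M$ vertices gives a bound more like $(4M)^{4M}$, which does still fit inside $M^{5M}2^{5M}$ for large $M$, but this needs to be said carefully, and one must check that the per-type relative contribution is simultaneously bounded by $\rho^{-2K}\cdot\mathrm{poly}(2^K,M^M)/\Phi_\calH$ without double-charging the combinatorial factors against the budget $M^{5M}2^{5M+2K}$ in \prettyref{eq:prop2}. In the paper this bookkeeping is exactly the content of \prettyref{lmm:L_phi_S_T}(iii) (the sum over $E_1\subset S_1\cap T_1$, $E_2\subset S_2\cap T_2$ with the exponent from \prettyref{lmm:O}\ref{O:4}) together with \prettyref{lmm:bk}; your sketch would need an equivalent lemma, and also a slightly more careful treatment of the diagonal, edge-sharing quadruples (whose weights differ from $(\rho\sigma^2)^{2K}$ and which you currently fold into the ``$1-o(1)$'' of the main term). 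None of this looks like an obstruction --- the mechanism you describe is the same one the paper exploits --- but it is where the proof actually lives.
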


\begin{remark}\label{rmk:prop2}
Note that $n^{|V(J)|}q^{|E(J)|}$  is approximately equal to $\expect{\sub(J,G_1)}$ where $G_1 \sim \calG(n,q)$.
Thus $\Phi_H$ determines the expected number of copies of the rarest non-empty subgraph of $H$ in $G_1$. It is well known that the ratio 
$\Var(\sub(H,G_1))/\left(\expect{\sub(H,G_1)}\right)^2$ is approximately given by  
$1/\Phi_H $
 (see e.g.~\cite[Lemma 3.5]{janson2011random}). In a similar vein, it turns out that  for
two non-isomorphic subgraphs $H,H' \in \calH$,
the correlation coefficient between $W_H(\bar{A}) W_{H}(\bar{B})$ and $W_{H'}(\bar{A}) W_{H'}(\bar{B})$ 
under $\calP$  is also approximately upper bounded by $1/\Phi_H$. Therefore, condition~\prettyref{eq:prop2} ensures that $W_H(\bar{A}) W_{H}(\bar{B})$ are weakly correlated across different $H \in \calH$ so that  $ \Var_{\calP}\left[f_{\calH}\right]$
is small compared to $
        \left(\Expect_{\calP}\left[f_{\calH}\right]\right)^2 $. 

To satisfy both condition~\prettyref{eq:prop1} and condition~\prettyref{eq:prop2}, the class $\calH$ should be chosen as rich as possible (at least exponentially large), while at the same time keeping $\Phi_\calH$ large. A natural choice for $\calH$  is $\calT$, the class of trees  
with $K$ edges, whose cardinality grows exponentially in $K$ as $(1/\alpha)^K$ --- see \prettyref{eq:otter}. Moreover, since any subgraph $J$ of a tree has $|V(J)| \ge |E(J)|+1$, it follows that $\Phi_\calT=n \min\{(nq)^K, 1\}$.  Thus it can be verified that both~\prettyref{eq:prop1} and~\prettyref{eq:prop2} hold under assumption~\prettyref{eq:K_condition}. 
In addition to this statistical benefit, the choice of trees also enables efficient computation of $f_{\calT}$, as we will see shortly in the next section. 


Computational considerations aside, another choice of $\calH$ is the class of 
balanced graphs (recall \prettyref{sec:subgraph_count}) with $K$ edges and $K/(1+\epsilon)$ vertices for a small constant $\epsilon>0$.
It is shown in~\cite{barak2019nearly} that $|\calH| \ge K^{cK}$ for some constant $c=c(\epsilon)>0$. Moreover, when $nq \ge n^{2\epsilon}$, by the balanced property, $\Phi_{\calH} \ge n^{\epsilon}$. Thus, it can be verified that both~\prettyref{eq:prop1} and~\prettyref{eq:prop2} hold by choosing $K=c'(\epsilon) \frac{\log n}{\log \log n}$ for some constant $c'(\epsilon)>0$ and $\rho> 2/K^c$.
This result extends the statistical guarantee in~\cite{barak2019nearly} to the full range of average degree as low as $n^{2\epsilon}$; however, it still suffers from the computational complexity issue, as there is no known
polynomial-time algorithm to efficiently compute $f_{\calH}$ over balanced graphs for large $K$.
\end{remark}

\subsection{Proof sketch for \prettyref{prop:ER_var_calP}} 
\label{sec:pf-sk-var}
To prove \prettyref{prop:ER_var_calP}, we need a key lemma that provides estimates for $\left\langle L, \phi_\sfS \phi_\sfT \right \rangle $. Recall that we denote $\sfS = (S_1,S_2)$ and $ \sfT= (T_1,T_2)$ where $S_1,S_2,T_1,T_2 \subset \binom{[n]}{2}$. In view of \prettyref{eq:L_phi_s}, we assume $S_1 \cong S_2$ and $T_1 \cong T_2$ for otherwise $\left\langle L, \phi_\sfS \phi_\sfT \right \rangle=0$. 
\begin{lemma} \label{lmm:L_phi_S_T}
		Assume that $S_1 \cong S_2 \cong H$ and $T_1 \cong T_2 \cong I$, where $H$ and $I$ are connected graphs with $K$ edges and at most $\NumVer$ vertices.
    Then we have the following: 
\begin{enumerate}[label=(\roman*)]
    \item (No overlap)
    If $V(S_1)\cap V(T_1)=\emptyset$ and $V(S_2)\cap V(T_2) = \emptyset$, then 
    \begin{align}
     0 \le \left\langle L, \phi_\sfS \phi_\sfT \right \rangle 
     & \le a_H a_I \gamma \left(1+\indc{H\cong I}\right)\, , \label{eq:L_phi_S_T_disjoint}
    \end{align}
    where $a_H$ is defined in \prettyref{eq:a_H} and
    \[
    \gamma \triangleq \exp\left(\frac{\NumVer^2}{n-2\NumVer+1}\right) \, ;
    \]
    \item (Single overlap)
    If $V(S_1)\cap V(T_1) = \emptyset$  and $V(S_2)\cap V(T_2) \ne \emptyset$, or if $V(S_1)\cap V(T_1) \ne \emptyset$ and $V(S_2)\cap V(T_2) = \emptyset$, then 
    \begin{align}
         \left\langle L, \phi_\sfS \phi_\sfT \right \rangle  
         = 0 \, ; \label{eq:L_phi_S_T_not_disjoint-1}
    \end{align}
    \item (Double overlap)     If
     $V(S_1)\cap V(T_1) \ne \emptyset$ and $V(S_2)\cap V(T_2) \ne \emptyset$, assuming $q \le 1/2$, then
    \begin{align}
         \left\langle L, \phi_\sfS \phi_\sfT \right \rangle  
        & \le \left(\frac{2\NumVer}{n}\right)^{|V(H)|+|V(I)|- |V(S_1)\cap V(T_1)|-|V(S_2)\cap V(T_2)|} \nonumber \\
        &~~~~ \left( \indc{S_1 = T_1} + h(S_1,T_1) \right)   \left( \indc{S_2 = T_2} + h(S_2,T_2) \right) \label{eq:L_phi_S_T_not_disjoint}  \, , 
    \end{align}
    where for any $S,T\subset \binom{[n]}{2}$,
      \begin{align}
               h(S,T) \triangleq \sum_{E \subset  S \cap T} q^{-\frac{1}{2}|E|} \left(\frac{2\NumVer}{n}\right)^{ \frac{1}{2}\left(|V(E)|+\indc{E = \emptyset}\right)}\, . 
				\label{eq:h_S_T}
           \end{align}
    
    Moreover, 
    \begin{align}
          h(S,T)  \le   
          \frac{ 2^{K} (2\NumVer)^{\frac{\NumVer}{2}}}{\sqrt{ \max\{ \Phi_S, \Phi_T\} }}  
          \, , \label{eq:upper_bound_h}
    \end{align}
    where $\Phi_S$ and $\Phi_T$ are defined as per \prettyref{eq:PhiH}. 
    \end{enumerate}
				\end{lemma}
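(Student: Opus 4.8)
We plan to compute $\langle L,\phi_\sfS\phi_\sfT\rangle$ exactly, by expanding the product $\phi_\sfS\phi_\sfT$ in the orthonormal basis $\{\phi_\sfU\}$ and then applying \eqref{eq:L_phi_s} term by term. The only nonlinear step is the hypercube identity $\bar A_{ij}^2=\sigma^2+(1-2q)\bar A_{ij}$ (valid since $\bar A_{ij}\in\{-q,1-q\}$) and its counterpart for $\bar B$: applying it to the edges common to $S_1,T_1$ and to those common to $S_2,T_2$, and collecting the powers of $\sigma$ (which simplify to $\sigma^{-|E_1|-|E_2|}$), we get
\begin{equation}\label{eq:pf-expand}
\phi_\sfS\phi_\sfT=\sum_{E_1\subseteq S_1\cap T_1}\ \sum_{E_2\subseteq S_2\cap T_2}\sigma^{-|E_1|-|E_2|}(1-2q)^{|E_1|+|E_2|}\,\phi_{(U_1,U_2)},\qquad U_i\triangleq(S_i\triangle T_i)\cup E_i,
\end{equation}
whence, using \eqref{eq:L_phi_s} and $\langle L,\phi_{(\emptyset,\emptyset)}\rangle=1$,
\begin{equation}\label{eq:pf-master}
\langle L,\phi_\sfS\phi_\sfT\rangle=\sum_{E_1\subseteq S_1\cap T_1}\ \sum_{E_2\subseteq S_2\cap T_2}\sigma^{-|E_1|-|E_2|}(1-2q)^{|E_1|+|E_2|}\,\rho^{|E(U_1)|}\,\frac{\indc{U_1\cong U_2}}{\sub_n(U_1)}.
\end{equation}
All three parts will be read off from \eqref{eq:pf-master}.

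Parts (i) and (ii) are short. In (i), disjointness of $V(S_1),V(T_1)$ forces $S_1\cap T_1=\emptyset$, hence $E_1=\emptyset$, and likewise $E_2=\emptyset$; only the term $U_1=S_1\sqcup T_1\cong H\sqcup I$ survives (with $U_2\cong H\sqcup I$), giving $\langle L,\phi_\sfS\phi_\sfT\rangle=\rho^{2K}/\sub_n(H\sqcup I)\ge0$, and \eqref{eq:L_phi_S_T_disjoint} reduces to $\sub_n(H)\sub_n(I)\le\gamma(1+\indc{H\cong I})\sub_n(H\sqcup I)$: the ratio $\aut(H\sqcup I)/(\aut(H)\aut(I))$ gives $1+\indc{H\cong I}$ and $\prod_{j=0}^{|V(H)|-1}\frac{n-j}{n-|V(I)|-j}\le(1+\tfrac{\NumVer}{n-2\NumVer+1})^{\NumVer}$ gives $\gamma$. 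In (ii), say $V(S_1)\cap V(T_1)=\emptyset$ and $V(S_2)\cap V(T_2)\ne\emptyset$: every term has $|V(U_1)|=|V(H)|+|V(I)|$ while $|V(U_2)|\le|V(S_2)\cup V(T_2)|=|V(H)|+|V(I)|-|V(S_2)\cap V(T_2)|<|V(H)|+|V(I)|$, so $U_1\not\cong U_2$ and the term vanishes; hence $\langle L,\phi_\sfS\phi_\sfT\rangle=0$.

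Part (iii) is where the work is. From \eqref{eq:pf-master}, using $|1-2q|\le1$, $|\rho|\le1$ (note $q\le1/2$ makes $1-2q\ge0$) and $\sub_n(U_1)\ge\binom{n}{|V(U_1)|}\ge(n/(2\NumVer))^{|V(U_1)|}$, we bound $\langle L,\phi_\sfS\phi_\sfT\rangle\le\sum_{E_1,E_2}\sigma^{-|E_1|-|E_2|}(2\NumVer/n)^{|V(U_1)|}\indc{U_1\cong U_2}$. The key is a careful count of $|V(U_1)|$: letting $P_i$ be the set of vertices of $V(S_i)\cup V(T_i)$ incident only to edges of $S_i\cap T_i$, one has $V(U_i)=V(S_i\triangle T_i)\cup(V(E_i)\cap P_i)$, so $|V(U_i)|=|V(H)|+|V(I)|-\ell_i-|P_i\setminus V(E_i)|$ with $\ell_i\triangleq|V(S_i)\cap V(T_i)|$. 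On the support of $\indc{U_1\cong U_2}$ we have $|V(U_1)|=|V(U_2)|$, so averaging the $i=1,2$ expressions and using $|P_i\setminus V(E_i)|\le|P_i|\le\ell_i$ peels off the advertised factor $(2\NumVer/n)^{|V(H)|+|V(I)|-\ell_1-\ell_2}$; after dropping $\indc{U_1\cong U_2}\le1$ the residual double sum factors into an $E_1$-sum and an $E_2$-sum. For each side one checks that $\ell_i-|P_i\setminus V(E_i)|\ge|V(E_i)|$ for $E_i\ne\emptyset$, and (by connectedness of $H$: if $S_i\ne T_i$ then some vertex of $V(S_i)\cap V(T_i)$ meets an edge of $S_i\triangle T_i$) that $\ell_i-|P_i|\ge1$ whenever $S_i\ne T_i$; hence the per-side sum is $\le\indc{S_i=T_i}+h(S_i,T_i)$, the indicator absorbing exactly the $E_i=\emptyset$ term in the degenerate case $S_i=T_i$ --- up to a harmless $2^{O(K)}$ factor from $\sigma^{-1}\le\sqrt2\,q^{-1/2}$, which is dominated by the slack in \eqref{eq:prop2}. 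This gives \eqref{eq:L_phi_S_T_not_disjoint}. For \eqref{eq:upper_bound_h}: each $E\ne\emptyset$ contributes $q^{-|E|/2}(2\NumVer/n)^{|V(E)|/2}=(2\NumVer)^{|V(E)|/2}/\sqrt{n^{|V(E)|}q^{|E|}}\le(2\NumVer)^{\NumVer/2}/\sqrt{\max\{\Phi_S,\Phi_T\}}$, because $E$, as an edge-induced subgraph, appears in both minima of \eqref{eq:PhiH}; the $E=\emptyset$ term $(2\NumVer/n)^{1/2}$ is handled the same way via $\Phi_S\le n$ (take $J$ a single vertex of $S$); summing over the $\le2^{|S\cap T|}\le2^K$ sets $E$ yields the $2^K$ prefactor.

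The main obstacle is part (iii): establishing the vertex-count identity $V(U_i)=V(S_i\triangle T_i)\cup(V(E_i)\cap P_i)$ and then verifying that, after extracting the principal power $(2\NumVer/n)^{|V(H)|+|V(I)|-\ell_1-\ell_2}$, the leftover double sum over the shared-edge subselections $(E_1,E_2)$ decouples and is controlled by $\big(\indc{S_1=T_1}+h(S_1,T_1)\big)\big(\indc{S_2=T_2}+h(S_2,T_2)\big)$. The precise shape of $h$ in \eqref{eq:h_S_T} --- in particular the $\indc{E=\emptyset}$ term in its exponent --- together with the $\indc{S_i=T_i}$ corrections is exactly what is needed to make this decoupling hold in the degenerate configuration $S_i=T_i$, where all shared vertices are ``internal'' and the naive vertex count falls one short.
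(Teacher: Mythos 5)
Your argument is correct, and it reaches the lemma by a genuinely different mechanism from the paper's. The paper conditions on the latent permutation $\pi$, bounds the mixed moments $\Expect_{\calP|\pi}[\sigma^{-\ell-m}\bar{A}_{ij}^{\ell}\bar{B}_{\pi(i)\pi(j)}^{m}]$ (\prettyref{lmm:cond-exp}), and then estimates $\Prob\left(\pi\left((S_1\Delta T_1)\cup E_1\right)=(S_2\Delta T_2)\cup E_2\right)$ via the vertex-count inequalities of \prettyref{lmm:O}\ref{O:2}--\ref{O:4}. You instead expand $\phi_\sfS\phi_\sfT$ exactly in the Fourier basis through the hypercube identity $\bar{A}_{ij}^2=\sigma^2+(1-2q)\bar{A}_{ij}$ and reuse the coefficients \prettyref{eq:L_phi_s}, so that cases (i) and (ii) become exact computations: your value $\rho^{2K}/\sub_n(H\sqcup I)$, combined with $\aut(H\sqcup I)=(1+\indc{H\cong I})\aut(H)\aut(I)$ for connected $H,I$ and the falling-factorial ratio bound, reproduces \prettyref{eq:L_phi_S_T_disjoint}, and your vertex-count mismatch argument for (ii) is the same as the paper's. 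In case (iii), your exact identity $|V(U_i)|=|V(H)|+|V(I)|-\ell_i-|P_i\setminus V(E_i)|$, the inclusion $V(E_i)\subset V(S_i)\cap V(T_i)$ (which gives $\ell_i-|P_i\setminus V(E_i)|\ge|V(E_i)|$), and the connectivity argument yielding $|P_i|\le\ell_i-1$ when $S_i\neq T_i$ (this does use the standing assumption $\ell_i\ge 1$, and in the subcase $V(S_i)\neq V(T_i)$ needs the short path argument you indicate) jointly play the role of \prettyref{lmm:O}\ref{O:4}; the averaging of the two exponents on the event $U_1\cong U_2$ and the subsequent decoupling of the $(E_1,E_2)$-sum coincide with the paper's final step, and your derivation of \prettyref{eq:upper_bound_h} matches the paper's. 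What your route buys is that all inequalities are postponed to the last counting step, the vertex bookkeeping being an identity rather than a chain of set-theoretic estimates; the cost is only that the coefficient formula must be applied to possibly disconnected $U_i$, which \prettyref{eq:subH} and \prettyref{lmm:O}\ref{O:1} cover without change.

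The one quantitative slip is in (iii): after the master formula you discard $(1-2q)^{|E_1|+|E_2|}$ and retain $\sigma^{-|E_1|-|E_2|}$, which proves the bound with $q^{-|E|/2}$ in \prettyref{eq:h_S_T} replaced by $(q(1-q))^{-|E|/2}$, i.e.\ the stated inequality only up to a factor at most $2^{K}$ per graph (you note this is absorbed by the slack in \prettyref{eq:prop2}). This loss is unnecessary: keeping the factor and using $(1-2q)/\sqrt{q(1-q)}\le(1-q)/\sqrt{q(1-q)}=\sqrt{(1-q)/q}\le q^{-1/2}$ for $q\le 1/2$ --- exactly the per-edge weight appearing in the paper's \prettyref{lmm:cond-exp} --- recovers \prettyref{eq:L_phi_S_T_not_disjoint} verbatim, with no $\sqrt{2}$ or $2^{O(K)}$ slack.
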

    
\begin{remark}
    Note that $\langle L, \phi_\sfS \phi_\sfT \rangle= \expects{\phi_\sfS\phi_\sfT}{\calP}$.
    Thus, \prettyref{lmm:L_phi_S_T} bounds from above the correlation between $\phi_\sfS$ and $\phi_\sfT$ under $\calP$ according to different overlapping patterns of $\sfS$ and $\sfT$.
\begin{itemize}
\item In case $(i)$, $\sfS$ and $\sfT$ do not overlap at all.  Note that $a_H a_I=\iprod{L}{\phi_\sfS}\iprod{L}{\phi_\sfT}$. 
Since $\gamma=1+o(1)$ as long as $M^2=o(n)$,
the upper bound \prettyref{eq:L_phi_S_T_disjoint},
loosely speaking, suggests that $\phi_\sfS$ and $\phi_\sfT$  behave as if they were  uncorrelated in this case.

\item In case $(ii)$, it turns out that $\expects{\phi_\sfS\phi_\sfT}{\calP}=0$, which implies that $\Cov(\phi_\sfS, \phi_\sfT)= -\expects{\phi_\sfS}{\calP}\expects{\phi_\sfT}{\calP}<0$ and hence  $\phi_\sfS$ and $\phi_\sfT$ are negatively correlated.

\item In case $(iii)$,  both $S_1, T_1$ and $S_2, T_2$
share common vertices,
we expect $\phi_\sfS$ and $\phi_\sfT$ are positively correlated. Indeed, in view of \prettyref{eq:L_phi_s} and \prettyref{eq:subH}, 
$\iprod{L}{\phi_\sfS}\iprod{L}{\phi_\sfT}$ is on the order of $\rho^{2K} n^{-|V(H)|-|V(I)|}$.
Thus,  
the upper bound \prettyref{eq:L_phi_S_T_not_disjoint}, loosely speaking, suggests that $\expects{\phi_\sfS\phi_\sfT}{\calP}$
is larger than 
$\iprod{L}{\phi_\sfS}\iprod{L}{\phi_\sfT}$
by a multiplicative factor of 
\begin{align*}
    \xi
    & \triangleq \rho^{-2K} (2M)^{|V(H)| + |V(I)|} n^{|V(S_1)\cap V(T_1)|+|V(S_2)\cap V(T_2)|}\\
    &~~~~ \left( \indc{S_1 = T_1} + h(S_1,T_1) \right)   \left( \indc{S_2 = T_2} + h(S_2,T_2) \right).
\end{align*}
This factor $\xi$ is polynomial in $n$
with the exponent given by the number of overlapping vertices. At one extreme,  $\xi$ achieves its maximal value when $\sfS$ and $\sfT$ completely overlap; fortunately, such pairs of 
$\sfS$ and $\sfT$ are few. At the other extreme, when $S_1 \cap T_1=\emptyset$ and $S_2 \cap T_2=\emptyset$, 
crucially we have $h(S_1,T_1) h(S_2, T_2)=\frac{2M}{n}$,
which is the key to guarantee that $\xi$ is sufficiently small in this case. In general,
thanks to \prettyref{eq:upper_bound_h}, 
we can upper bound $h(S,T)$ by bounding $\Phi_{S}$ from below.
\end{itemize}
\end{remark}

To prove~\prettyref{lmm:L_phi_S_T}, the most challenging part is case $(iii)$, which requires us to carefully  consider 
all the possible overlaps between $S_1, T_1$ in $A$ and $S_2, T_2$ in $B$, and bound their contributions by averaging over the latent permutation $\pi$ and exploiting bounds on overlap sizes. See Appendix~\ref{sec:proof_L_phi_S_T} for details.


With~\prettyref{lmm:L_phi_S_T}, we  outline the proof of~\prettyref{prop:ER_var_calP}. Note that 
        \begin{align*}
         \Var_{\calP}\left[f_{\calH}\right]
            &  =\Expect_{\calP}\left[f_{\calH}^2\right]- \left(\Expect_{\calP}\left[f_{\calH}\right]\right)^2  \nonumber \\
            & =  \sum_{[H]\in \calH}a_H \sum_{[I]\in \calH}a_I \sum_{S_1\cong H} \sum_{S_2 \cong H} \sum_{T_1\cong I}\sum_{T_2\cong I}  \left(\left\langle L, \phi_\sfS \phi_\sfT \right \rangle - \left\langle L, \phi_\sfS\right \rangle   \left\langle L, \phi_\sfT\right \rangle \right) \, . 
        \end{align*}
    In view of
    \prettyref{eq:L_phi_s}, we have 
    $ \left\langle L, \phi_\sfS\right \rangle   \left\langle L, \phi_\sfT\right \rangle  \ge 0$
     for any $S_1 \cong S_2 \cong H$ and $T_1 \cong T_2 \cong I$.
    Therefore, applying \prettyref{eq:L_phi_S_T_not_disjoint-1} in~\prettyref{lmm:L_phi_S_T}, we get that the contribution of the single-overlap case to $\Var_{\calP}[f_\calH]$ is non-positive. Hence we can bound  $\Var_{\calP}\left[f_\calH\right]$ from the above by $\termI + \termII$, 
    where
     \begin{align}
       \termI 
        & = \sum_{[H]\in \calH}a_H \sum_{[I]\in \calH}a_I \sum_{S_1\cong H} \sum_{S_2\cong H} \sum_{T_1\cong I}\sum_{T_2\cong I}  \left(\left\langle L, \phi_\sfS \phi_\sfT \right \rangle - \left\langle L, \phi_\sfS\right \rangle   \left\langle L, \phi_\sfT\right \rangle \right) \nonumber 
        \\
        &~~~~  \indc{V(S_1) \cap V(T_1)= \emptyset } \indc{V(S_2) \cap V(T_2)= \emptyset } \label{eq:ER_varP_termI}\, , \\
        \termII 
        & = \sum_{[H]\in \calH} a_H \sum_{[I]\in \calH} a_I\sum_{S_1\cong H} \sum_{S_2\cong H} \sum_{T_1\cong I}\sum_{T_2\cong I}  \left\langle L, \phi_\sfS \phi_\sfT \right \rangle \indc{\left\langle L, \phi_\sfS \phi_\sfT \right \rangle \ge 0 }  \nonumber \\
        &~~~~ \indc{ V(S_1) \cap V(T_1)\neq \emptyset \text{ and } V(S_2) \cap V(T_2) \neq \emptyset}  \label{eq:ER_varP_termII} \, . 
    \end{align}
  Note that  $\termI$ and $\termII$ correspond to the no-overlap case and double-overlap case, respectively. 
We separately bound them by plugging in~\prettyref{eq:L_phi_S_T_disjoint}
and~\prettyref{eq:L_phi_S_T_not_disjoint}
in \prettyref{lmm:L_phi_S_T},  and completing all the summations. See~\prettyref{app:ER_var_calP} for details. 

 \subsection{Proof of \prettyref{thm:low_degree}}
 With \prettyref{prop:mean_p_q_var_q} and \prettyref{prop:ER_var_calP}, we are ready to prove \prettyref{thm:low_degree}.


     \begin{proof}[Proof of \prettyref{thm:low_degree}]
  It suffices to focus on the case $q\le 1/2$.  
If  $q>1/2$, 
consider the complement graphs of the observed graphs, which are correlated \ER graphs with parameter $(n,1-q,\rho)$.
Denote their adjacency matrices by $A^c$ and $B^c$, where $A^c_{ij}=1-A_{ij}$ and 
$B^c_{ij}=1-B_{ij}$.
The condition~\prettyref{eq:K_condition} remains unchanged. Moreover,
the test statistic in \prettyref{eq:f} satisfies $f_{\calT}(A,B) =f_{\calT}(A^c,B^c)$.
This is because the centered version satisfies $\bar{A^c} = - \bar{A}$, so that for each $[H]\in\calT$,
$W_H(\bar{A^c}) = (-1)^{|E(H)|} 
W_H(\bar{A})$ and hence 
$W_H(\bar{A^c}) W_H(\bar{B^c}) 
=W_H(\bar{A}) W_H(\bar{B})$.
Hence, without loss of generality, we assume that $q \le \frac{1}{2}$.

    
    Under  condition~\prettyref{eq:K_condition}, 
    combining \prettyref{prop:mean_p_q_var_q} and \prettyref{prop:ER_var_calP}, we have
     \begin{align*}
        \Expect_{\calQ}[f_{\calT}] = 0, \quad 
        \Var_{\calQ}\left[f_{ \calT}\right] = \Expect_{\calP}[f_{ \calT}] = o\left(\left(\Expect_{\calP}\left[f_{ \calT}\right]\right)^2\right),
        \quad \text{and} \quad
        \Var_{\calP}[f_{\calT}]= o\left(\left(\Expect_{\calP}\left[f_{ \calT}\right]\right)^2 \right). 
        \end{align*}
    Thus, for any constant $0<C<1$, 
    we obtain 
    \begin{align*}
          & \calQ\left(f_{\calT}\ge C \Expect_{\calP}[f_{\calT}] \right) \le \frac{\Var_{\calQ}\left[f_{\calT}\right]}{C^2 \left(\Expect_{\calP}[f_{\calT}]\right)^2} = o(1), \\
          & \calP\left(f_{\calT}\le C \Expect_{\calP}[f_{\calT}] \right) \le \frac{\Var_{\calP}\left[f_{\calT}\right] }{
          (1-C)^2 \left(\Expect_{\calP}\left[f_{\calT}\right]\right)^2} = o(1).
    \end{align*}
   It follows that the testing error of $f_{\calT}(A,B)$ vanishes as $n\diverge$.   
   \end{proof}

\section{Approximate test statistic by color coding} \label{sec:computation_color_coding}

In this section, we provide an efficient algorithm to approximately compute
the test statistic $f_\calT(A,B)$ given in \prettyref{eq:f_calH}, using the idea of color coding. 
Color coding was first introduced by \cite{alon1995color} 
as a randomized algorithm 
to efficiently find simple paths, cycles, or other small subgraphs (query graphs) in a given unweighted graph (host graph), and was further developed in \cite{arvind2002approximation} as a fast randomized algorithm to approximately count  subgraphs isomorphic to a query graph with a bounded treewidth (e.g., trees with bounded number of edges) in a host graph. In particular, given a query graph with $K$ vertices and a host graph with
$n$ vertices, 
the color coding method first assigns colors from $[K]$ to vertices of the host graph uniformly at random and then counts the so-called \emph{colorful subgraphs} (vertices having distinct colors) that are isomorphic to the query graph. Importantly, the process of counting the colorful subgraphs 
can be efficiently done using dynamic programming with a total time complexity that is polynomial in $n$ and exponential in $K$. Our use of color coding is inspired by the recent work \cite[Section 2.5]{hopkins2017bayesian}, which applies color coding to   
compute low-degree polynomial statistics for community detection under the stochastic block model. 
Since it is crucial to work with centered adjacency matrices, we need to extend the existing color coding algorithms 
\cite{alon1995color,arvind2002approximation} that are designed for counting unweighted graph to  weighted graphs.



By applying the color coding method, we first approximately count the signed subgraphs that are isomorphic to a query tree with $K$ edges. Specifically, given $M$ as a weighted adjacency matrix of a graph on $[n]$, we generate a random coloring $\mu: [n] \to [K+1]$ that assigns a color to each vertex of $M$  from the color set $[K+1]$ independently and uniformly at random. Given any $V\subset[n]$, let $\chi_{\mu} (V)$ indicate that $\mu(V)$ is colorful, i.e., $\mu(x)\neq \mu(y)$ for any distinct $x,y \in V$. In particular, if $|V|=K+1$, then $\chi_\mu (V)=1$ with probability 
\begin{equation}
r \triangleq \frac{(K+1)!}{(K+1)^{K+1}}.
\label{eq:r}
\end{equation} 
For any graph $H$ with $K+1$ vertices, we define
  \begin{align}
        X_H (M, \mu )
        & \triangleq  \sum_{S\cong H} \chi_{\mu}(V(S)) \prod_{(i,j)\in E(S)} M_{ij} \, .\label{eq:X_H_M}
    \end{align}
   Then $\expect{X_H (M,\mu) \mid  M }=r W_H(M)$, where $W_H(M)$ is defined in \prettyref{eq:W_H}. Hence, $X_H (M, \mu) /r$ is an unbiased estimator of $W_H(M)$. 
   To further obtain an accurate approximation of $W_H(M)$, we average over multiple copies of  
   $X_{H} (M ,\mu )$ by generating $t$ independent random colorings, where 
   \[
   t \triangleq \ceil{1/r}{}.
   \]
	Next, we plug in the averaged subgraph count to approximately compute $f_\calT(A,B)$. Specifically, we generate $2t$ random colorings $\{\mu_i\}_{i=1}^t$ and $\{\nu_j\}_{j=1}^t$ which are independent copies of $\mu$ that map $[n]$ to $[K+1]$. 
    Then, we define 
   \begin{align}
        Y_{\calT} (A,B)
        & \triangleq \sum_{[H]\in \calT} \aut(H)
         \left(\frac{1}{t} \sum_{i=1}^t X_H (\bar{A},\mu_i ) \right) \left(\frac{1}{t} \sum_{j=1}^t X_H (\bar{B},\nu_j) \right) 
        \label{eq:Y_calT_def}
   \end{align} 
   and
   \begin{equation}
      \tilde f_\calT(A,B) \triangleq \frac{\beta}{r^2} Y_\calT(A,B) \, .          \label{eq:ftilde}
   \end{equation} 
  The following result shows that $\tilde f_\calT(A,B)$ well approximates $f_{\calT}(A,B)$ in a relative sense. 
   \begin{proposition}\label{prop:er_Y_calT_testing_error}
   Suppose that \prettyref{eq:K_condition} holds.
   Then  as $n \to \infty$, under both $\calP$ and $\calQ$,
   \begin{align}
       \frac{\tilde f_\calT - f_{\calT}}{ \expects{f_{\calT}}{\calP}} \xrightarrow{L_2} 0.
       \label{eq:tildef-converge}
   \end{align}
   \end{proposition}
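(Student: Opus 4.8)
The plan is to condition on the observed pair $(A,B)$ and view $\tilde f_\calT$ as a conditionally unbiased, low-variance randomization of $f_\calT$. Because the $2t$ colorings $\{\mu_i\},\{\nu_j\}$ are mutually independent and independent of $(A,B)$, and $\Expect[X_H(M,\mu)\mid M]=rW_H(M)$ for every $[H]\in\calT$, the product form of $Y_\calT$ in \prettyref{eq:Y_calT_def} together with \prettyref{eq:ftilde} gives $\Expect[\tilde f_\calT\mid A,B]=f_\calT$. Hence $\Expect[(\tilde f_\calT-f_\calT)^2]=\Expect[\Var(\tilde f_\calT\mid A,B)]$, and since $\Expect_\calP[f_\calT]=\rho^{2K}|\calT|$ (by \prettyref{prop:mean_p_q_var_q}) diverges under \prettyref{eq:K_condition} (as $\rho^2>\alpha$ and $K=\omega(1)$), it suffices to show
\[
\Expect[\Var(\tilde f_\calT\mid A,B)]=o\big((\rho^{2K}|\calT|)^2\big)
\]
under both $\calP$ and $\calQ$; as usual one may assume $q\le 1/2$ by passing to complement graphs, under which both $f_\calT$ and $\tilde f_\calT$ are invariant.

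I would then expand the conditional variance. Setting $U_H=\tfrac1t\sum_i X_H(\bar A,\mu_i)$, $V_H=\tfrac1t\sum_j X_H(\bar B,\nu_j)$ and $\bar A_S\triangleq\prod_{(i,j)\in S}\bar A_{ij}$, subtracting conditional means, and using $\{\mu_i\}\perp\{\nu_j\}$ together with the mean-zero property to annihilate the conditional cross-expectations, one obtains
\begin{align*}
\Var(\tilde f_\calT\mid A,B)
&=\frac{\beta^2}{r^2}\sum_{[H],[I]\in\calT}\aut(H)\aut(I)\big(W_H(\bar A)W_I(\bar A)\,C^B_{H,I}+W_H(\bar B)W_I(\bar B)\,C^A_{H,I}\big)\\
&\quad+\frac{\beta^2}{r^4}\sum_{[H],[I]\in\calT}\aut(H)\aut(I)\,C^A_{H,I}\,C^B_{H,I},
\end{align*}
where $C^A_{H,I}=\tfrac1t\sum_{S\cong H}\sum_{S'\cong I}\Cov\big(\chi_\mu(V(S)),\chi_\mu(V(S'))\big)\,\bar A_S\bar A_{S'}$ and $C^B_{H,I}$ is its $\bar B$-analogue. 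The first structural fact to record is that $\chi_\mu(V(S))$ and $\chi_\mu(V(S'))$ are independent whenever $|V(S)\cap V(S')|\le 1$: conditioning on the (at most one) shared colour, each indicator is colourful with the same probability $r=(K+1)!/(K+1)^{K+1}$, and the colours on the two remaining vertex sets (each of size at most $K$) are i.i.d.; consequently $\Cov(\chi_\mu(V(S)),\chi_\mu(V(S')))$ is supported on pairs with $|V(S)\cap V(S')|\ge 2$ and always lies in $[0,r]$.

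For the null $\calQ$ this is already enough: since $A\perp B$ and $\Expect_\calQ[\bar A_S\bar A_{S'}]=\indc{S=S'}(q(1-q))^K$, every off-diagonal term vanishes, and using $\aut(H)\sub_n(H)=n!/(n-K-1)!$ and $\Var(\chi_\mu(V(S)))=r(1-r)$ a direct computation gives
\[
\Expect_\calQ[\Var(\tilde f_\calT\mid A,B)]=\Big(\frac{2(1-r)}{rt}+\frac{(1-r)^2}{(rt)^2}\Big)\rho^{2K}|\calT|=O\big(\rho^{2K}|\calT|\big)
\]
(using $rt\ge 1$), which is $o((\rho^{2K}|\calT|)^2)$.

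The substantive case is $\calP$, where $A$ and $B$ are correlated. Here I would write each fourth moment $\Expect_\calP[\bar A_S\bar A_{S'}\bar B_T\bar B_{T'}]$ that appears (with $S,T\cong H$ and $S',T'\cong I$) as $\sigma^{4K}\langle L,\phi_\sfS\phi_\sfT\rangle$ with $\sfS=(S,T)$, $\sfT=(S',T')$, and invoke \prettyref{lmm:L_phi_S_T}. The crucial observation is that the factor $\Cov(\chi_\mu(V(T)),\chi_\mu(V(T')))$ forces $|V(T)\cap V(T')|\ge 2>0$, so the no-overlap case (i) of the lemma is vacuous and, by \prettyref{eq:L_phi_S_T_not_disjoint-1}, the single-overlap case (ii) contributes exactly zero; only the double-overlap case (iii) remains. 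Bounding $\Cov(\chi_\mu(V(T)),\chi_\mu(V(T')))\le r$ and carrying out the four nested sums exactly as for the double-overlap term $\termII$ in the proof of \prettyref{prop:ER_var_calP} -- the prefactors collapsing via $\beta^2\aut(H)\aut(I)\sigma^{4K}=a_Ha_I$ and leaving only a harmless $1/(rt)\le 1$ (resp.\ $1/(rt)^2\le 1$) -- yields $\Expect_\calP[\Var(\tilde f_\calT\mid A,B)]$ of order $\termII$, and the estimate for $\termII$ established in the proof of \prettyref{prop:ER_var_calP} is $o((\rho^{2K}|\calT|)^2)$ for $\calH=\calT$ under \prettyref{eq:K_condition}. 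I expect the main obstacle to be precisely this last matching step -- verifying that every contribution to the color-coding variance is routed through case (iii) of \prettyref{lmm:L_phi_S_T} and stays within the already-controlled bound on $\termII$, including the bookkeeping of the scaling factors and the role of the $\ge 2$-overlap restriction; the remainder is routine.
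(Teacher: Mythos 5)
Your proposal is correct and takes essentially the same route as the paper: condition on $(A,B)$, use conditional unbiasedness of $\tilde f_\calT$ and the law of total variance, reduce to bounding $\Expect[\Var(\tilde f_\calT\mid A,B)]$ under both hypotheses, and show that no-overlap and single-overlap configurations contribute nothing (the latter via case (ii) of \prettyref{lmm:L_phi_S_T}) while the double-overlap contribution is absorbed, up to harmless $1/(rt)$ factors, into the already-controlled term $\termII$ from the proof of \prettyref{prop:ER_var_calP}. The only difference is bookkeeping: the paper sums over coloring-pair indices $(i,j,i',j')$ and packages the key estimate as \prettyref{lmm:er_expect_var_A_B}, whereas you work with the averaged estimators and the (correct) observation that the colorful-indicator covariance is in $[0,r]$ and vanishes when two trees share at most one vertex — an equivalent computation.
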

   Since the convergence in $L_2$ implies the convergence in probability, as an immediate corollary of~\prettyref{thm:low_degree}
 and~\prettyref{prop:er_Y_calT_testing_error}, we conclude that 
 the approximate test statistic 
 $\tilde f_\calT$ succeeds under the same condition as the original test
  statistic $f_\calT$, proving 
\prettyref{thm:compution}.  
 
    
 Finally, we show that the approximate test statistic $Y_\calT (A,B)$ can be computed efficiently using \prettyref{alg:algorithm1}. 
   \begin{algorithm}
\caption{Computation of test statistic via color coding}\label{alg:algorithm1}
\begin{algorithmic}[1]
\State{\bfseries Input:} Centered adjacency matrices $\bar A$ and $\bar B$, correlation coefficient $\rho$, and an integer $K$.
\State Apply the constant-time free tree generation algorithm in \cite{dinneen2015constant,wright1986constant} to list all non-isomorphic unrooted unlabeled trees with $K$ edges and return $\calT$. 
\State For each $[H] \in \calT$, compute $ \aut(H)$ by algorithm in \cite{colbourn1981linear}.
\State Generate \iid random colorings $\{\mu_i\}_{i=1}^t$ and $\{\nu_i\}_{i=1}^t$ mapping $[n]$ to $[K+1]$. 
\For{each $i = 1 ,\cdots, t$}
\State{\bfseries} 
For each $H \in \calT$, compute $X_H(\overline{A},\mu_i)$ and $X_H(\overline{B},\nu_i)$ via \prettyref{alg:X_M_H} given in~\prettyref{sec:alg_tree_counting}. 
\EndFor
\State Compute $Y_{\calT}(A,B)$ according to  \prettyref{eq:Y_calT_def}. 
\State{\bfseries Output:}  $Y_{\calT}(A,B)$.
\end{algorithmic}
\end{algorithm}
   
   \begin{proposition}\label{prop:Y_calT_computation}
   \prettyref{alg:algorithm1} computes $Y_\calT(A,B)$ in time 
   $O\left(n^2(3e/\alpha)^{K}  \right)$, where $\alpha$ is Otter's constant in \prettyref{eq:otter}.
   Furthermore, under condition \prettyref{eq:K_condition},  the time complexity reduces to $n^{2+o(1)}$.
   \end{proposition}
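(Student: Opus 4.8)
The plan is to bound the running time of each step of \prettyref{alg:algorithm1} separately and combine. The dominant cost will come from the loop over the $t = \ceil{1/r}$ colorings, so I would first record that $1/r = (K+1)^{K+1}/(K+1)! = e^{K+1}/\sqrt{2\pi(K+1)}\,(1+o(1))$ by Stirling, hence $t = e^{K(1+o(1))}$. For the other steps: enumerating all non-isomorphic free trees with $K$ edges (step~2) can be done in time $O(|\calT|)$ using the constant-delay generation algorithm of \cite{dinneen2015constant,wright1986constant}, and by Otter's bound \prettyref{eq:otter} we have $|\calT| = (1/\alpha)^{K(1+o(1))}$; computing $\aut(H)$ for each tree (step~3) takes time polynomial in $K$ via \cite{colbourn1981linear}, so this entire preprocessing costs $(1/\alpha)^{K}\cdot \mathrm{poly}(K)$. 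Generating the $2t$ colorings (step~4) costs $O(tn)$. Finally, assembling $Y_\calT$ in step~8 from the already-computed $X_H(\bar A,\mu_i)$, $X_H(\bar B,\nu_j)$ requires $O(t|\calT|)$ arithmetic operations (compute the two averages per $[H]\in\calT$ and sum), which is subsumed by the loop cost below.

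The heart of the argument is the per-coloring inner cost: I would invoke the dynamic-programming routine \prettyref{alg:X_M_H} of \prettyref{sec:alg_tree_counting} (the weighted color-coding recursion), which for a fixed coloring $\mu_i$ computes $X_H(\bar A,\mu_i)$ \emph{simultaneously} for all $[H]\in\calT$ in time $O(n^2 \cdot c^{K})$ for an absolute constant $c$ — the $n^2$ coming from ranging over edges/endpoints in the host graph and the $c^K$ from the bookkeeping over color subsets and rooted-subtree shapes. The sharpest accounting gives $c = 3e$ after combining the $2^{K}$ factor from subsets of colors with the number of rooted subtrees, which is again $(1/\alpha)^{K(1+o(1))}$ but is absorbed into the constant via $2 \cdot (1/\alpha) < 3 \cdot (1/\alpha) \le \ldots$; in any case the clean statement is that one coloring costs $O(n^2 (3e/\alpha)^{K})$ after dividing out, or more carefully, $O(n^2\, e^{O(K)})$ and then tracking the constant. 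Multiplying by $t = e^{K(1+o(1))}$ and by the factor $|\calT| = (1/\alpha)^{K(1+o(1))}$ hidden in \prettyref{alg:X_M_H}'s output size, and checking that the preprocessing and assembly terms are lower order, yields the total bound $O\!\left(n^2 (3e/\alpha)^K\right)$.

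For the ``furthermore'' claim, under \prettyref{eq:K_condition} we have $K \le \frac{\log n}{16\log\log n}$, so $(3e/\alpha)^K = \exp\!\big(K\log(3e/\alpha)\big) \le \exp\!\big(O(\log n/\log\log n)\big) = n^{o(1)}$, and likewise $t = n^{o(1)}$; therefore the running time is $n^{2}\cdot n^{o(1)} = n^{2+o(1)}$.

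The main obstacle is the precise per-coloring complexity of \prettyref{alg:X_M_H}: one must verify that the weighted dynamic program over rooted subtrees of all query trees in $\calT$, indexed by (root vertex in $[n]$, subset of used colors, rooted-tree shape), correctly accumulates the signed weighted counts $X_H$ and does so within $O(n^2 e^{O(K)})$ time — in particular that the color-subset convolutions at each internal node of a query tree do not blow up the exponent beyond the claimed constant, and that sharing the DP table across all $[H]\in\calT$ (rather than recomputing per tree) is what keeps the $n$-dependence at $n^2$ rather than $n^{O(1)}$. This is the step where the $3e/\alpha$ constant is actually pinned down, and it is where I would spend the bulk of the effort (deferred, as the excerpt indicates, to \prettyref{app:algorithm}).
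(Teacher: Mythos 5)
Your high-level decomposition (preprocessing, $2t$ colorings, per-coloring DP calls, final assembly) matches the paper's, and your handling of the ``furthermore'' part is fine: under \prettyref{eq:K_condition} one has $K=O(\log n/\log\log n)$, hence $(3e/\alpha)^K=n^{o(1)}$. But the quantitative core of the proposition is missing and, as sketched, incorrect. The paper's bound rests on \prettyref{lmm:X_H_M}: for a \emph{single} query tree $H$ and a \emph{single} coloring, \prettyref{alg:X_M_H} computes $X_H(M,\mu)$ in $O(K3^K n^2)$ time, where the $3^K$ comes from $\sum_k \binom{K+1}{k+1}2^{k+1}$, i.e.\ the splits $(C_1,C_2)\in\sfC(C)$ in the recursion \prettyref{eq:X_recursive} (each of the $K+1$ colors is either outside $C$, in $C_1$, or in $C_2$), and the $n^2$ comes from summing over $y\in[n]$ for each $x\in[n]$. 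One then multiplies by $|\calT|$ trees per coloring and by $t=O(e^K)$ colorings, and uses the full Otter asymptotics \prettyref{eq:otterfull}, $|\calT|=\Theta\bigl((1/\alpha)^{K+1}(K+1)^{-5/2}\bigr)$, to absorb the extra $\mathrm{poly}(K)$ factor, giving $O\bigl(|\calT| K(3e)^K n^2\bigr)=O\bigl((3e/\alpha)^K n^2\bigr)$.

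Your sketch instead asserts that the DP computes $X_H(\bar A,\mu_i)$ ``simultaneously for all $[H]\in\calT$'' in $O(n^2c^K)$ with the number of tree shapes ``absorbed into the constant''; but that number is $(1/\alpha)^{K(1+o(1))}$, exponential in $K$, so it cannot be absorbed into a constant, nor is such sharing what \prettyref{alg:X_M_H} does or what keeps the $n$-dependence at $n^2$ (the paper reruns the DP for each tree and still gets $n^2$, which comes solely from the pairwise sum over host vertices). Moreover your factor accounting is internally inconsistent: you state a per-coloring cost of $O(n^2(3e/\alpha)^K)$ and then multiply it again by $t=e^{K(1+o(1))}$ and by $|\calT|=(1/\alpha)^{K(1+o(1))}$, which would yield $O\bigl(n^2(3e^2/\alpha^2)^K\bigr)$ rather than the claimed bound; the factor $e$ should enter only through the number of colorings and $1/\alpha$ only through $|\calT|$, while the per-tree, per-coloring DP contributes only $3^K\cdot\mathrm{poly}(K)$. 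Since you explicitly defer verification of precisely this step, and your sketch of it misattributes the sources of the factors $3$, $e$, and $1/\alpha$, the proposal has a genuine gap at the proposition's main quantitative claim.
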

   
   Combining~\prettyref{prop:er_Y_calT_testing_error} and~\prettyref{prop:Y_calT_computation} yields~\prettyref{thm:compution}.

   \subsection{Proof of \prettyref{prop:Y_calT_computation}} \label{sec:alg_tree_counting}

        The constant-time free tree generation algorithm provided in \cite{dinneen2015constant,wright1986constant} returns a list of all non-isomorphic unrooted trees in time linear in the total number of trees. 
        Colbourn and Booth \cite{colbourn1981linear} provided an algorithm to compute the automorphism group order of a given tree in time linear in the size of the tree. Hence, the total time complexity to output $\calT$ and compute $a_H$ for each $H \in \calT$ is $O(K|\calT|)$.
      
        Next we introduce a polynomial-time algorithm to compute $X_H(M,\mu)$ given any tree $H$ with $K$ edges, a weighted graph $M$ on $[n]$, and a coloring $\mu: [n] \to [K+1]$.
				See \prettyref{fig:example} and \prettyref{tab:orbits} for an example of $H$ and the various definitions in \prettyref{alg:X_M_H}.
       Note that this algorithm is a generalization 
       of the subgraph counting  method in~\cite{alon2008biomolecular} to weighted graphs and can be further extended to any graphs $H$ with bounded treewidth. A fast implementation of the subgraph counting  method in~\cite{alon2008biomolecular} is provided by~\cite{slota2013fast}.

        \begin{algorithm}[ht] 
            \caption{Computation of $X_H(M,\mu)$}\label{alg:X_M_H}
            \begin{algorithmic}[1]
            \State{\bfseries Input:} A weighted host graph $M$ on $[n]$, a coloring $\mu:[n] \to [K+1]$, and a query tree $H$ with $K$ edges and automorphism group order $\aut(H)$. 
            \State Label the nodes in $H$ and choose an arbitrary node of $H$ as its root. Label the edges by $\{e_1,e_2,\cdots,e_K\}$ in $H$ in the reverse order visited by the depth-first search (DFS), so that the last edge visited is labeled as $e_1$. 
            \State For $i=1, \cdots, K$, let 
						$e_i = (p_i,c_i)$ where $p_i$ is the parent node of $c_i$ in the rooted tree $H$;
            let $F_i$ denote the forest consisting of edges $\{e_1,\ldots,e_i\}$ and $T_i$ the maximal tree containing edge $e_i$ in $F_i$ with root node $p_i$; let $a_i$ (resp.\ $b_i$) denote  the largest index $j<i$ such that $e_j$ is incident to $p_i$ (resp.\ $c_i$); if no such an $e_j$ exists, set $a_i=0$ (resp.\ $b_i=0$) by default. 
						
						\State For $i=1, \cdots, K$, by removing the edge $e_i$, the tree $T_i$ is partitioned into two disjoint trees $T_{a_i}$ rooted  at $p_i$ and $T_{b_i}$ rooted at $c_i$. 
            \State For $i=1, \cdots, K$, 
            for every $x\in [n]$ and every subset $C \subset [K+1]$ of colors with $|C|=|V(T_i)|$, compute recursively
            \begin{align}
             Y(x,T_i, C, \mu ) \triangleq \sum_{y\in [n] \backslash \{x\}} \sum_{ (C_1,C_2)\in \sfC(C)}
             Y(x,T_{a_i}, C_1, \mu ) \times  Y(y,T_{b_i}, C_2, \mu ) \times M_{xy} \, , \label{eq:X_recursive}
            \end{align}
            where 
              \begin{align}
                 \sfC(C) \triangleq \{(C_1,C_2):C_1 \neq \emptyset, \, C_2 \neq \emptyset, \, C_1 \cap C_2 = \emptyset, \, C_1 \cup C_2 =C \} \, , \label{eq:sfC}
             \end{align} 
             and for any rooted tree $T_0$ with a single vertex,
            \begin{align}
                    Y(x,T_{0}, C, \mu ) \triangleq  \indc{ \{\mu(x)\} = C}  \, .\label{eq:x_T_0} 
            \end{align}

            
            \State{\bfseries Output:}
            \[
            \frac{1}{\aut(H)} \sum_{x\in[n]}  Y(x,T_K,  [K+1], \mu ) \, .
            \]
            \end{algorithmic}
            \end{algorithm}

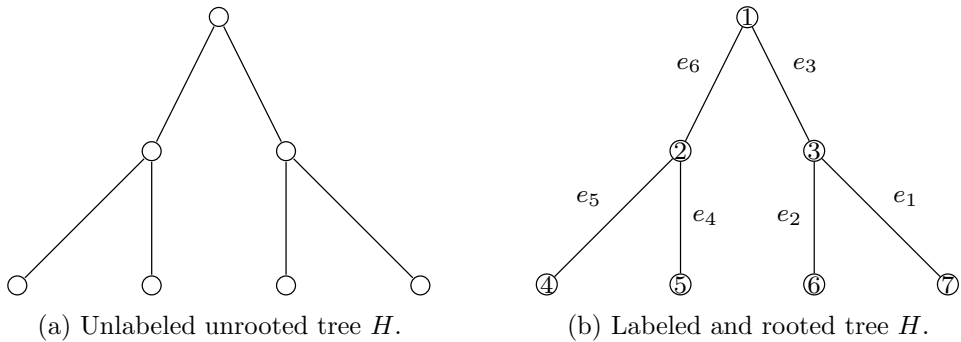
\begin{figure}[H]
\centering
\begin{tabular}
{C{.4\textwidth}C{.4\textwidth}}
\subcaptionbox{Unlabeled unrooted tree $H$.}{
\resizebox{0.35\textwidth}{!}{%
\centering
\begin{tikzpicture}[scale=1.5,font=\scriptsize]
    \draw (2,1) node[fill,circle,scale=2] (1) [nodedot] { } ;
	\draw (1.5,0) node[fill,circle,scale=2] (2) [nodedot] {} ;
	\draw (2.5,0) node[fill,circle,scale=2] (3) [nodedot] {} ;
	\draw (0.5,-1) node[fill,circle,scale=2] (4) [nodedot] {} ;
	\draw (1.5,-1) node[fill,circle,scale=2] (5) [nodedot] {} ;
    \draw (2.5,-1) node[fill,circle,scale=2] (6) [nodedot] {} ;
	\draw (3.5,-1) node[fill,circle,scale=2] (7) [nodedot] {} ;
	\draw (1)--(3) (1)--(2) (3)--(6) (3)--(7) (2)--(5) (2)--(4);
\end{tikzpicture}
}}
&
\subcaptionbox{Labeled and rooted tree $H$.}{
    \resizebox{0.35\textwidth}{!}{%
    \centering
    \begin{tikzpicture}[scale=1.5,font=\scriptsize]
    \draw (2,1) node (1) [nodedot] {1} ;
	\draw (1.5,0) node (2) [nodedot] {2} ;
	\draw (2.5,0) node (3) [nodedot] {3} ;
	\draw (0.5,-1) node (4) [nodedot] {4} ;
	\draw (1.5,-1) node (5) [nodedot] {5} ;
    \draw (2.5,-1) node (6) [nodedot] {6} ;
	\draw (3.5,-1) node (7) [nodedot] {7} ;
	\draw (1) edge[auto=right] node{$e_6$} (2);
	\draw (1) edge[auto=left] node{$e_3$} (3);
	\draw (3) edge[auto=right] node{$e_2$} (6);
	\draw (3) edge[auto=left] node{$e_1$} (7);
	\draw (2) edge[auto=right] node{$e_5$} (4);
	\draw (2) edge[auto=left] node{$e_4$} (5);
\end{tikzpicture}
}}
\end{tabular}
\caption{
In the left panel $H$ is an unlabeled unrooted tree with $6$ edges; in the right panel, we label the nodes by $\{1,2,\cdots, 7\}$ and choose node $1$ as its root, and then label edges by $\{e_1,e_2,\cdots, e_6\}$ in $H$ in the reverse order visited by the DFS.
}
\label{fig:example}
\end{figure}
\begin{table}[!ht]
\centering
\begin{tabular}{ >{\centering\arraybackslash}m{0.2in}  | >{\centering\arraybackslash}m{1.5in}  |>{\centering\arraybackslash}m{1.5in}  |   >{\centering\arraybackslash}m{0.3 in} |>{\centering\arraybackslash}m{0.3 in}}
$i$ & $F_i$ & $T_i$ & $a_i$ & $b_i$ \\
\hline
    $1$ & 
    \tikz[scale=1,font=\scriptsize]{  
	\draw (2.5,0) node (3) [nodedot] {3} ;
	\draw (3.5,-1) node (7) [nodedot] {7} ;
	\draw (3) edge[auto=left] node{$e_1$} (7);
	} 
	& \tikz[scale=1,font=\scriptsize]{  
	\draw (2.5,0) node (3) [nodedot] {3} ;
	\draw (3.5,-1) node (7) [nodedot] {7} ;
	\draw (3) edge[auto=left] node{$e_1$} (7);
	} 
	& 0 & 0 \\
	\hline
	$2$ & \tikz[scale=1,font=\scriptsize]{  
	\draw (2.5,0) node (3) [nodedot] {3} ;
    \draw (2.5,-1) node (6) [nodedot] {6} ;
	\draw (3.5,-1) node (7) [nodedot] {7} ;
	\draw (3) edge[auto=right] node{$e_2$} (6);
	\draw (3) edge[auto=left] node{$e_1$} (7);
	} &  \tikz[scale=1,font=\scriptsize]{  
	\draw (2.5,0) node (3) [nodedot] {3} ;
    \draw (2.5,-1) node (6) [nodedot] {6} ;
	\draw (3.5,-1) node (7) [nodedot] {7} ;
	\draw (3) edge[auto=right] node{$e_2$} (6);
	\draw (3) edge[auto=left] node{$e_1$} (7);
	} 
	& 1 &  0 \\
	\hline
	$3$  & \tikz[scale=1,font=\scriptsize]{  
     \draw (2,1) node (1) [nodedot] {1} ;
	\draw (2.5,0) node (3) [nodedot] {3} ;
    \draw (2.5,-1) node (6) [nodedot] {6} ;
	\draw (3.5,-1) node (7) [nodedot] {7} ;
	\draw (1) edge[auto=left] node{$e_3$} (3);
	\draw (3) edge[auto=right] node{$e_2$} (6);
	\draw (3) edge[auto=left] node{$e_1$} (7);
	} 
	& \tikz[scale=1,font=\scriptsize]{  
     \draw (2,1) node (1) [nodedot] {1} ;
	\draw (2.5,0) node (3) [nodedot] {3} ;
    \draw (2.5,-1) node (6) [nodedot] {6} ;
	\draw (3.5,-1) node (7) [nodedot] {7} ;
	\draw (1) edge[auto=left] node{$e_3$} (3);
	\draw (3) edge[auto=right] node{$e_2$} (6);
	\draw (3) edge[auto=left] node{$e_1$} (7);
	} 
	& 0 & 2 \\
	\hline
	$4$ 
	 & \tikz[scale=1,font=\scriptsize]{  
     \draw (2,1) node (1) [nodedot] {1} ;
	\draw (1.5,0) node (2) [nodedot] {2} ;
	\draw (2.5,0) node (3) [nodedot] {3} ;
	\draw (1.5,-1) node (5) [nodedot] {5} ;
    \draw (2.5,-1) node (6) [nodedot] {6} ;
	\draw (3.5,-1) node (7) [nodedot] {7} ;
	\draw (1) edge[auto=left] node{$e_3$} (3);
	\draw (3) edge[auto=right] node{$e_2$} (6);
	\draw (3) edge[auto=left] node{$e_1$} (7);
	\draw (2) edge[auto=left] node{$e_4$} (5);
	} 
	& 
	\tikz[scale=1,font=\scriptsize]{  
	\draw (1.5,0) node (2) [nodedot] {2} ;
	\draw (1.5,-1) node (5) [nodedot] {5} ;
	\draw (2) edge[auto=left] node{$e_4$} (5);
	} 
	& 0 & 0 \\
	\hline
	$5$ & \tikz[scale=1,font=\scriptsize]{  
     \draw (2,1) node (1) [nodedot] {1} ;
	\draw (1.5,0) node (2) [nodedot] {2} ;
	\draw (2.5,0) node (3) [nodedot] {3} ;
	\draw (0.5,-1) node (4) [nodedot] {4} ;
	\draw (1.5,-1) node (5) [nodedot] {5} ;
    \draw (2.5,-1) node (6) [nodedot] {6} ;
	\draw (3.5,-1) node (7) [nodedot] {7} ;
	\draw (1) edge[auto=left] node{$e_3$} (3);
	\draw (3) edge[auto=right] node{$e_2$} (6);
	\draw (3) edge[auto=left] node{$e_1$} (7);
	\draw (2) edge[auto=right] node{$e_5$} (4);
	\draw (2) edge[auto=left] node{$e_4$} (5);
	} 
	& \tikz[scale=1,font=\scriptsize]{  
	\draw (1.5,0) node (2) [nodedot] {2} ;
	\draw (0.5,-1) node (4) [nodedot] {4} ;
	\draw (1.5,-1) node (5) [nodedot] {5} ;
	\draw (2) edge[auto=right] node{$e_5$} (4);
	\draw (2) edge[auto=left] node{$e_4$} (5);
	} 
	& 4 & 0 \\
	\hline
    $6$ & \tikz[scale=1,font=\scriptsize]{  
     \draw (2,1) node (1) [nodedot] {1} ;
	\draw (1.5,0) node (2) [nodedot] {2} ;
	\draw (2.5,0) node (3) [nodedot] {3} ;
	\draw (0.5,-1) node (4) [nodedot] {4} ;
	\draw (1.5,-1) node (5) [nodedot] {5} ;
    \draw (2.5,-1) node (6) [nodedot] {6} ;
	\draw (3.5,-1) node (7) [nodedot] {7} ;
	\draw (1) edge[auto=right] node{$e_6$} (2);
	\draw (1) edge[auto=left] node{$e_3$} (3);
	\draw (3) edge[auto=right] node{$e_2$} (6);
	\draw (3) edge[auto=left] node{$e_1$} (7);
	\draw (2) edge[auto=right] node{$e_5$} (4);
	\draw (2) edge[auto=left] node{$e_4$} (5);
	} 
	& \tikz[scale=1,font=\scriptsize]{  
     \draw (2,1) node (1) [nodedot] {1} ;
	\draw (1.5,0) node (2) [nodedot] {2} ;
	\draw (2.5,0) node (3) [nodedot] {3} ;
	\draw (0.5,-1) node (4) [nodedot] {4} ;
	\draw (1.5,-1) node (5) [nodedot] {5} ;
    \draw (2.5,-1) node (6) [nodedot] {6} ;
	\draw (3.5,-1) node (7) [nodedot] {7} ;
	\draw (1) edge[auto=right] node{$e_6$} (2);
	\draw (1) edge[auto=left] node{$e_3$} (3);
	\draw (3) edge[auto=right] node{$e_2$} (6);
	\draw (3) edge[auto=left] node{$e_1$} (7);
	\draw (2) edge[auto=right] node{$e_5$} (4);
	\draw (2) edge[auto=left] node{$e_4$} (5);
	} 
	& 3 & 5 \\
	\hline
\end{tabular}
\caption{Example of the definition of $\{T_i\}_{i=1}^6$ and $\{(a_i,b_i)\}_{i=1}^6$ in \prettyref{alg:X_M_H} applied to the labeled rooted tree $H$ in \prettyref{fig:example}.}
\label{tab:orbits}
\end{table}

        The following lemma shows that the output of~\prettyref{alg:X_M_H} coincides with $X_H(M,\mu)$ and bounds the time complexity. 
        \begin{lemma}\label{lmm:X_H_M}
               For any coloring $\mu:[n]\to [K+1]$ and any tree $H$ with $K$ edges,  \prettyref{alg:X_M_H} computes $X_H(M,\mu)$
                in time  $O(K3^{K}n^2)$. 
        \end{lemma}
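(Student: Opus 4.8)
The plan is to prove two things: that \prettyref{alg:X_M_H} outputs $X_H(M,\mu)$, and that it runs in time $O(K3^Kn^2)$. The heart of the correctness proof is a semantic characterization of the intermediate quantities. Call a map $\phi$ from $V(T_i)$ to $[n]$ a \emph{$C$-colorful embedding of $T_i$ rooted at $x$} if $\phi(p_i)=x$ and $\mu\circ\phi$ is a bijection from $V(T_i)$ onto $C$; note this already forces $\phi$ to be injective, since $\mu$ is a function. I would show, by induction on $i=0,1,\ldots,K$, that
\[
Y(x,T_i,C,\mu)=\sum_{\phi}\ \prod_{(u,v)\in E(T_i)}M_{\phi(u)\phi(v)},
\]
where $\phi$ ranges over all $C$-colorful embeddings of $T_i$ rooted at $x$, for every $x\in[n]$ and every $C\subseteq[K+1]$ with $|C|=|V(T_i)|$. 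The base case $i=0$ (a single vertex) is immediate from \prettyref{eq:x_T_0}. For the inductive step I would use the structural fact recorded in Step~4 of \prettyref{alg:X_M_H} --- itself a standard property of the reverse-DFS edge labeling --- that deleting $e_i=(p_i,c_i)$ splits $T_i$ into $T_{a_i}$ (rooted at $p_i$, since $e_{a_i}$ is a child-edge of $p_i$) and $T_{b_i}$ (rooted at $c_i$), with $V(T_i)=V(T_{a_i})\sqcup V(T_{b_i})$ and $a_i,b_i<i$. Given this, any $C$-colorful embedding $\phi$ of $T_i$ rooted at $x$ restricts to a $C_1$-colorful embedding $\phi_1$ of $T_{a_i}$ rooted at $x$ and a $C_2$-colorful embedding $\phi_2$ of $T_{b_i}$ rooted at $y\triangleq\phi(c_i)$, where $(C_1,C_2)\in\sfC(C)$ is the induced partition of the colors; conversely every such triple $(\phi_1,\phi_2,(C_1,C_2))$ glues back to a unique $\phi$, because $\mu\phi_1,\mu\phi_2$ being bijective onto the disjoint sets $C_1,C_2$ makes $\mu\phi$ bijective onto $C$ (and in particular forces $x\neq y$). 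Since the edge weight factors as $\prod_{E(T_{a_i})}\cdot\prod_{E(T_{b_i})}\cdot M_{xy}$, summing over these triples and invoking the inductive hypothesis for $T_{a_i}$ and $T_{b_i}$ reproduces exactly the recursion \prettyref{eq:X_recursive}.

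Applying this characterization with $i=K$, $T_K=H$ and $C=[K+1]$, and summing over $x\in[n]$, the quantity $\sum_{x}Y(x,T_K,[K+1],\mu)$ equals the sum of $\prod_{(u,v)\in E(H)}M_{\phi(u)\phi(v)}$ over all injective $\phi\colon V(H)\to[n]$ whose image is colorful. Each unlabeled copy $S\subset K_n$ with $S\cong H$ and $V(S)$ colorful arises as the image of exactly $\aut(H)$ such maps, all contributing the same weight $\prod_{(i,j)\in E(S)}M_{ij}$; hence this sum equals $\aut(H)\sum_{S\cong H}\chi_{\mu}(V(S))\prod_{(i,j)\in E(S)}M_{ij}=\aut(H)\,X_H(M,\mu)$ by \prettyref{eq:X_H_M}. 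Dividing by $\aut(H)$ shows that the output of \prettyref{alg:X_M_H} equals $X_H(M,\mu)$.

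For the running time, process $i=1,\ldots,K$ in increasing order; this is legitimate because the recursion for $T_i$ references only the tables for $T_{a_i},T_{b_i}$ with $a_i,b_i<i$ together with the base case $T_0$. The table for step $i$ has $n\binom{K+1}{|V(T_i)|}$ entries, and each entry is computed by \prettyref{eq:X_recursive} at cost $O\big(n\,|\sfC(C)|\big)=O\big(n\,2^{|V(T_i)|}\big)$ once the smaller tables are available. Hence the total cost is
\begin{align*}
O\!\left(n^2\sum_{i=1}^{K}\binom{K+1}{|V(T_i)|}2^{|V(T_i)|}\right)
&= O\!\left(n^2 K \max_{0\le v\le K+1}\binom{K+1}{v}2^{v}\right)\\
&= O\!\left(K\,3^{K}n^2\right),
\end{align*}
where the last step uses $\sum_{v=0}^{K+1}\binom{K+1}{v}2^{v}=3^{K+1}$. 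Constructing the DFS labeling and the data $p_i,c_i,a_i,b_i,T_i$ takes $O(K)$ time, and the final summation over $x$ takes $O(n)$ time, both negligible.

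I expect the main obstacle to be making the inductive step watertight: one must verify that the passage $\phi\leftrightarrow(\phi_1,\phi_2,(C_1,C_2))$ is an exact bijection --- neither over- nor under-counting embeddings --- which depends both on $V(T_{a_i}),V(T_{b_i})$ genuinely partitioning $V(T_i)$ (the DFS structural claim of Step~4) and on colorfulness subsuming injectivity, so that the ``disjoint image'' constraint is enforced automatically by $C_1\cap C_2=\emptyset$. Everything else is bookkeeping.
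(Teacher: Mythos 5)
Your proposal is correct and follows essentially the same route as the paper's proof: a dynamic-programming induction along the reverse-DFS edge order, with the color-partition $(C_1,C_2)\in\sfC(C)$ decomposition playing exactly the role of the paper's identity for $\chi_\mu(\phi(V(T_i)),C)$ (colorfulness enforcing injectivity), followed by the $\aut(H)$-to-one correspondence between embeddings and copies $S\cong H$, and the same $\sum_i \binom{K+1}{|V(T_i)|}2^{|V(T_i)|} = O(K\,3^{K+1})$ accounting for the running time. No gaps.
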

        
         Using \prettyref{lmm:X_H_M}, whose proof is postponed till~\prettyref{app:proof_lemma_XHM}, we complete the proof of~\prettyref{prop:Y_calT_computation}. 
        For each iteration $i$ in \prettyref{alg:algorithm1}, $X_H(\overline{A},\mu_i)$ and $X_H(\overline{B},\nu_i)$ for all $H \in \calT$ can be computed by  \prettyref{alg:X_M_H} in $O(|\calT| K 3^{K} n^2)$ time. 
         Since $ t = \ceil{1/r} = O(e^{K})$, the total time complexity of \prettyref{alg:algorithm1} to output $Y_\calH(A,B) $ is 
        \[
            O\left(|\calT| K (3e)^{K} n^2 + K|\calT|\right) =   O\left(|\calT| K (3e)^{K} n^2 \right) = O\left( \left(\frac{3e}{\alpha}\right)^{K} n^2 \right) \, ,
        \]
        where the last equality holds because the set $\calT$ of unlabeled trees with $K$ edges satisfies \cite{otter1948number}
     \begin{equation}
         |\calT|=  \left(\frac{1}{\alpha}\right)^{K+1} \left(K+1\right)^{-\frac{5}{2}} (C+o_K(1)), \quad K\to\infty
         \label{eq:otterfull}
     \end{equation}
      where $\alpha \approx 0.33833$ and $C \approx 0.53495$ are absolute constants.
        Finally, under condition \prettyref{eq:K_condition}, we have 
        $K=O(\log n/ \log \log n)$ and hence $(3e/\alpha)^{K}=e^{O(\log n/\log \log n)}=n^{O(1/\log \log n)}=n^{o(1)}$.

\section{Numerical results}\label{sec:numerical}

In this section, we provide numerical results on synthetic data  to corroborate our theoretical findings. 
To this end, we independently generate 100 pairs of graphs that are independent $G(n,q)$, and another 100 pairs from the correlated \ER model $G(n,q,\rho)$.

Fixing $n=1000$, $q=0.1$, and $\rho = 0.99$, we consider trees with $K=7$ edges and $t=1000$ random colorings, and plot the histograms of our approximated test statistics  \prettyref{eq:Y_calT_def} in
\prettyref{fig:histogram_K_7}.
We see that the two histograms under the independent and correlated models are well separated, and the type-I error and type-II error are found to be $5\%$ and $9\%$, respectively,  by selecting the detection threshold as the theoretical value
$\frac{1}{2} \Expect_\calP\left[Y_\calT\left(A,B\right)\right] = \frac{r^2}{2\beta}\rho^{2K} |\calT|$ suggested by \prettyref{thm:low_degree}.
\begin{figure}[ht]
    \centering
    \includegraphics[width=0.5\textwidth]{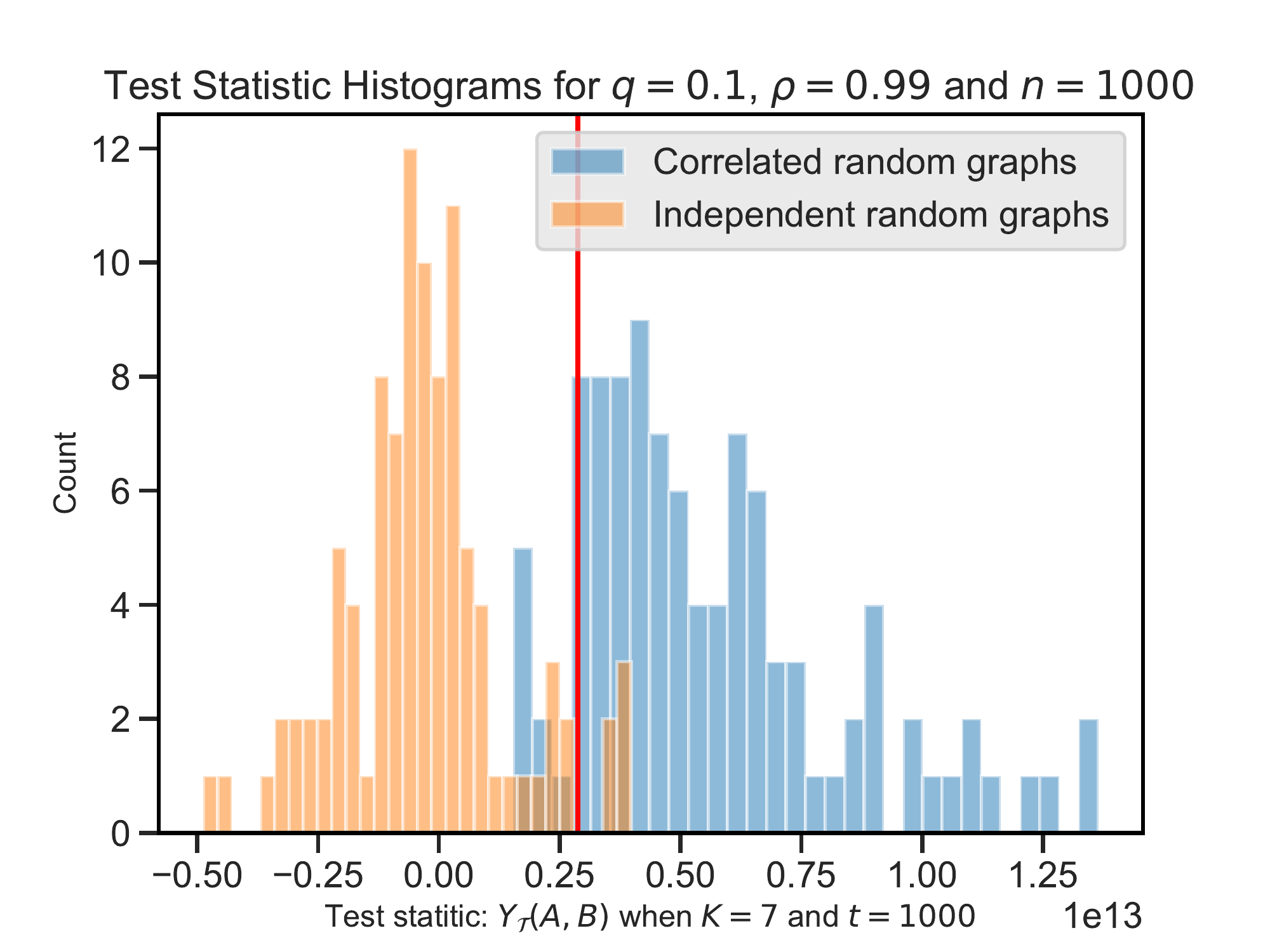}
    \caption{
    The histograms of the approximate tree  counting statistic $Y_{\calT}(A,B)$ \prettyref{eq:Y_calT_def} for $K=7$ and $t=1000$ random colorings over $100$ pairs of graphs, where the orange one corresponds to the independent model, the blue one corresponds to the correlated model, and the red line indicates the detection threshold.
    }
    \label{fig:histogram_K_7}
\end{figure}
To compare the performance of our test statistic under different settings, We also plot the Receiver Operating Characteristic (ROC) curves by
 varying detection threshold and 
 plotting the true positive rate (one minus Type-II error) against the false positive rate (Type-I error).
For comparison, we  also plot the ROC curve for the random classifier, which is simply the diagonal. Finally, we compute the area under the curve (AUC), which can be interpreted as the probability that the test statistic has a larger value for a pair of graphs drawn from the correlated model than that drawn from the independent model independently.

In \prettyref{fig:rho_change}, 
for each plot, we fix $n=1000$, $K=6$, $t=1000$, and $q \in \{0.001,0.01,0.1,0.5\}$, and vary $\rho \in \{0.8, 0.85,0.9,0.95,0.99\}$. 
We observe that as $\rho$ increases, the ROC curve is moving toward the upper left corner and the AUC increases, 
demonstrating that our test statistic has improving performance. 
Moreover, as evident in the mean and variance calculation in \prettyref{prop:mean_p_q_var_q}, we need 
$\rho \ge |\calT|^{-\frac{1}{2K}} \approx 0.82$ in order for the signal-to-noise ratio to exceed one, as there are in total $|\calT|=11$ non-isomorphic trees with $K=6$ edges. Nevertheless, \prettyref{fig:rho_change} shows that our test statistic still achieves non-trivial power even when $\rho =0.8$ is close to this threshold. 
Recall from \prettyref{thm:low_degree} that
 the smallest $\rho$ our test statistic can hope to achieve is $\lim_{K\to\infty} |\calT|^{-\frac{1}{2K}} = \sqrt{\alpha} \approx 0.581$ where $\alpha\approx 0.338$ is Otter's constant. Getting close to this threshold is computationally prohibitive as this convergence is rather slow: For  $K=35$, $|\calT|^{-\frac{1}{2K}}$ is still over $0.65$, at which case the number of unlabeled trees exceeds six trillion.
 (See  \cite{oeis-A000055} for a list of values for $|\calT|$.)

\begin{figure}[ht]
    \centering
    \begin{minipage}{0.5\textwidth}
        \centering
        \includegraphics[width=1\textwidth]{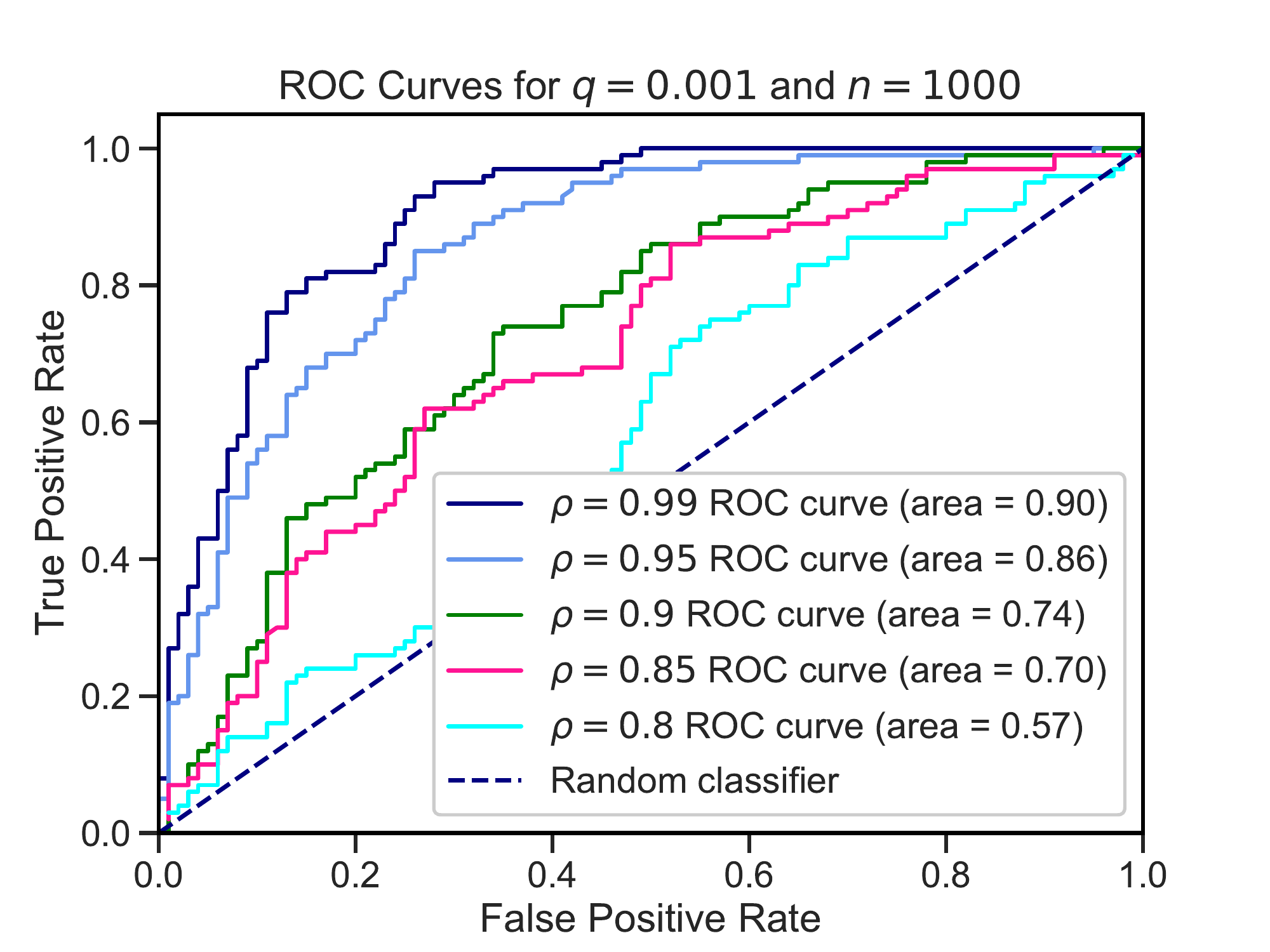} 
    \end{minipage}\hfill
    \begin{minipage}{0.5\textwidth}
        \centering
        \includegraphics[width=1\textwidth]{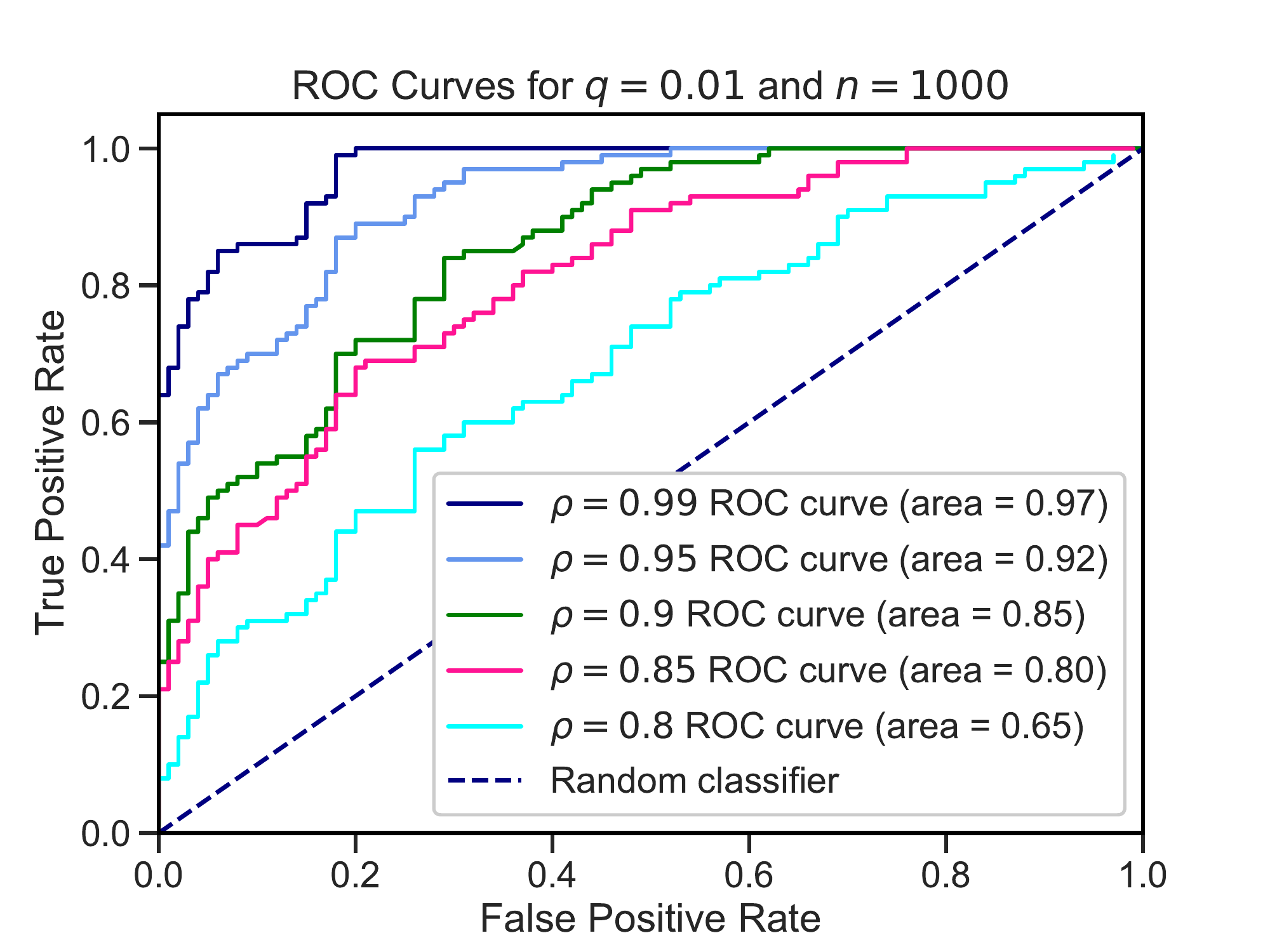} 
    \end{minipage}
        \begin{minipage}{0.5\textwidth}
        \centering
        \includegraphics[width=1\textwidth]{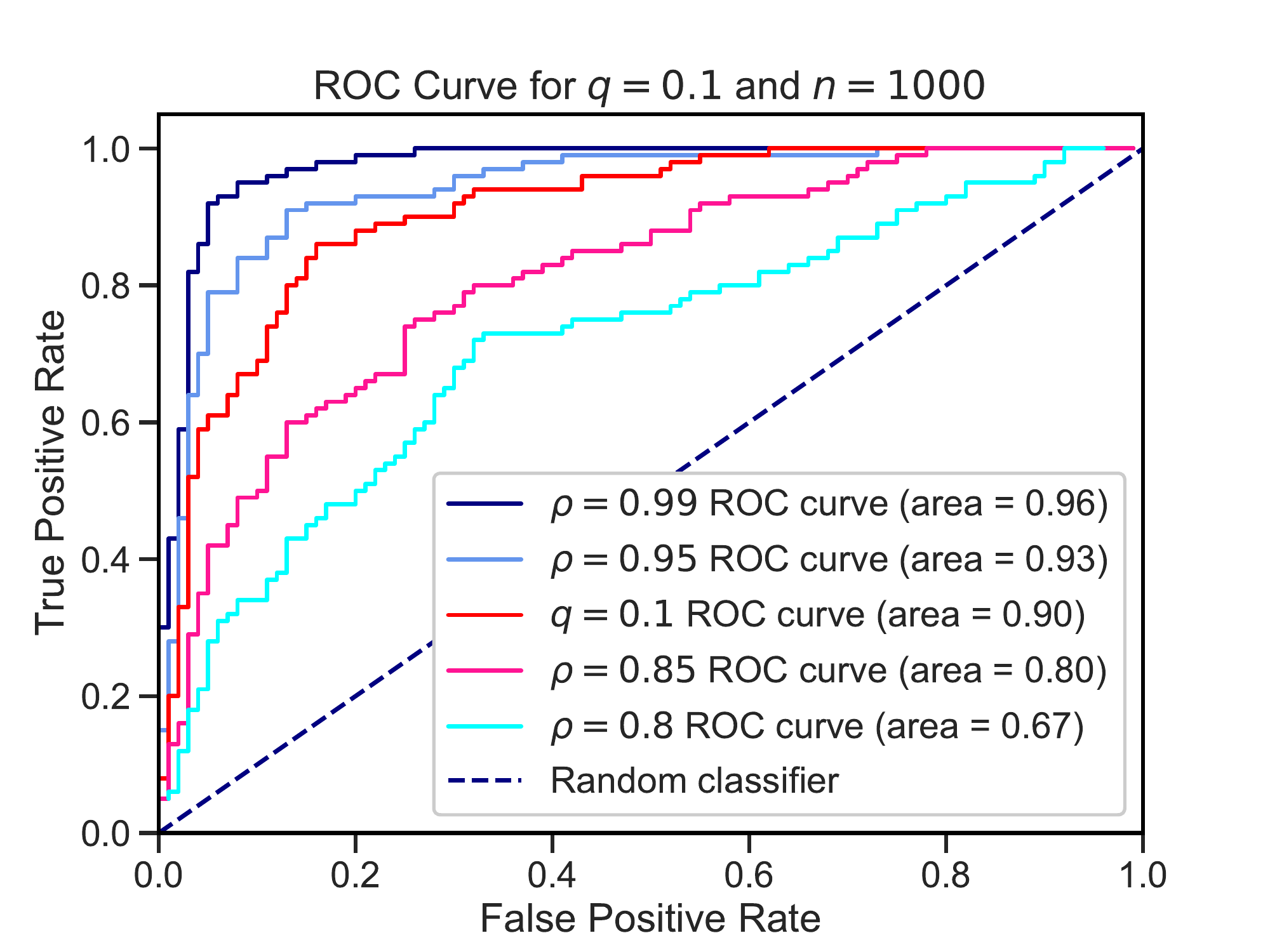} 
    \end{minipage}\hfill
        \begin{minipage}{0.5\textwidth}
        \centering
        \includegraphics[width=1\textwidth]{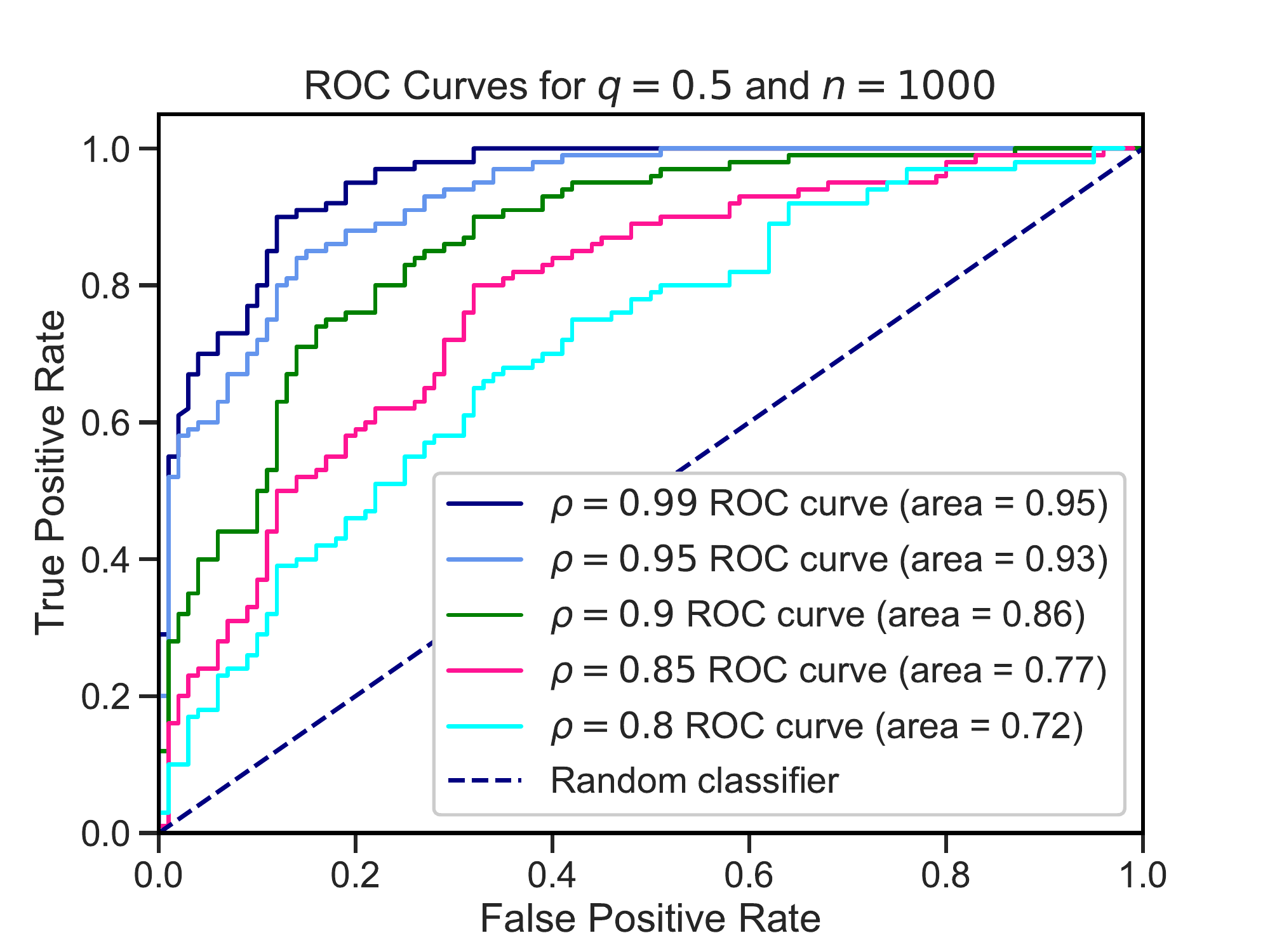} 
    \end{minipage}
    \caption{
    Comparison of the proposed test statistic
    for fixed edge probability $q$ and varying correlation parameter $\rho$.
} 
    \label{fig:rho_change}
\end{figure}
In \prettyref{fig:q_change},
we vary the edge density $q \in \{0.001,0.01,0.1,0.5\}$. 
We observe that our test statistic performs well for a wide range of graph sparsity, except that when $q=0.001$, the performance  slightly degrades. 
This is consistent with the theoretical results, showing that our test statistic works as long as the graphs are not overly sparse.
\begin{figure}[ht]
    \centering
    \begin{subfigure}{0.5\textwidth}
        \centering
        \includegraphics[width=1\textwidth]{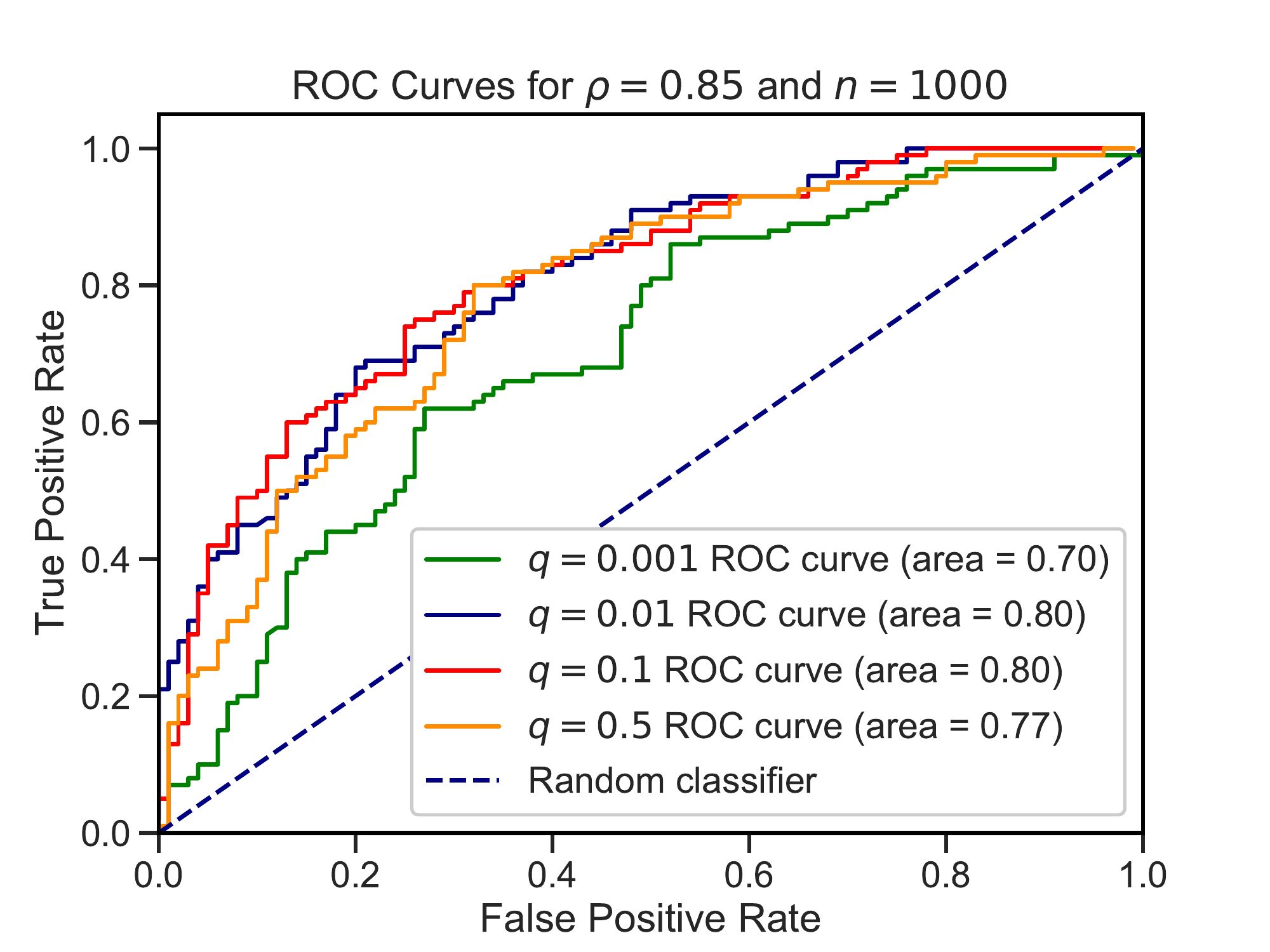} 
    \end{subfigure}\hfill
    \begin{subfigure}{0.5\textwidth}
        \centering
        \includegraphics[width=1\textwidth]{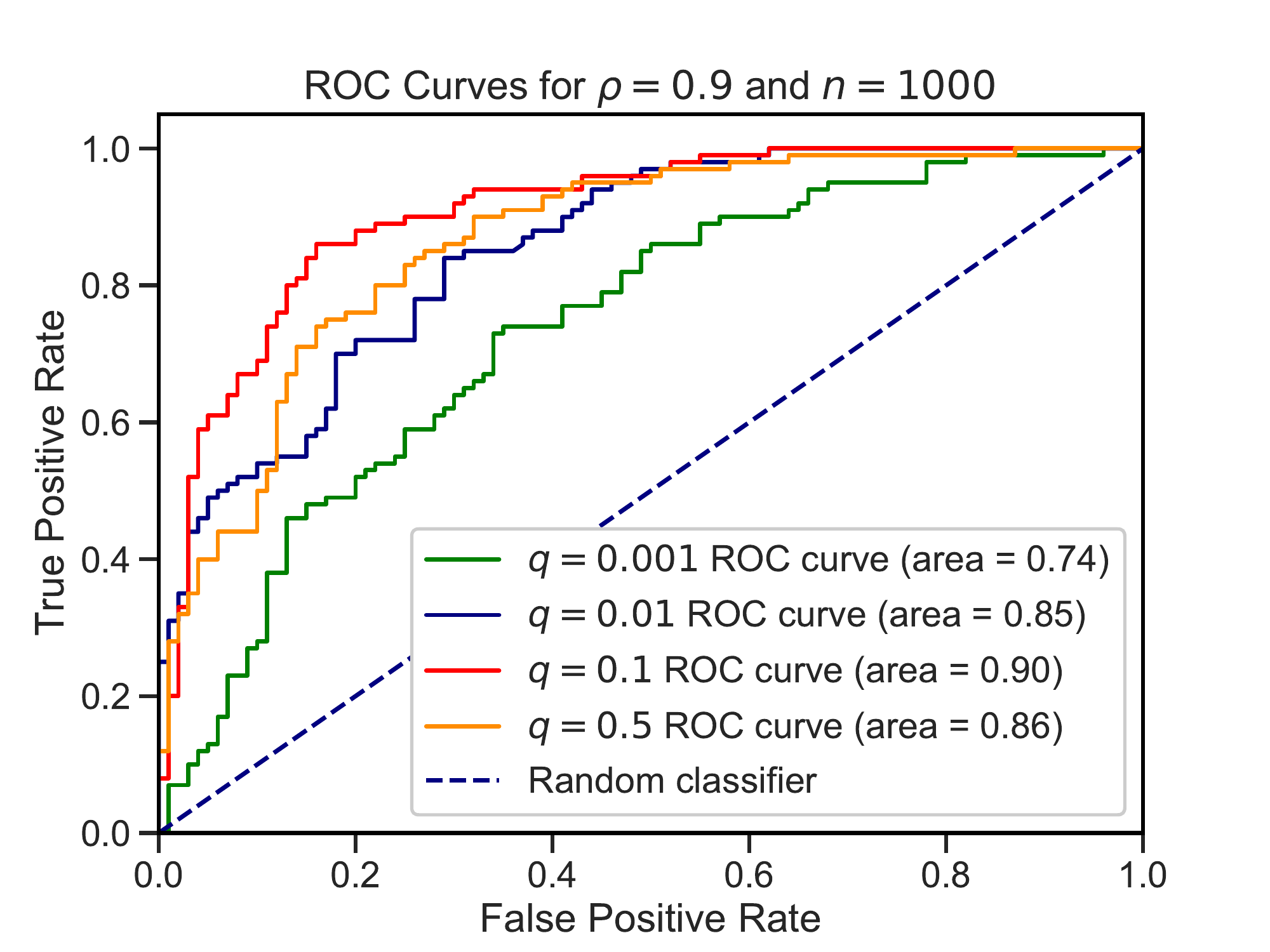} 
    \end{subfigure}
        \begin{subfigure}{0.5\textwidth}
        \centering
        \includegraphics[width=1\textwidth]{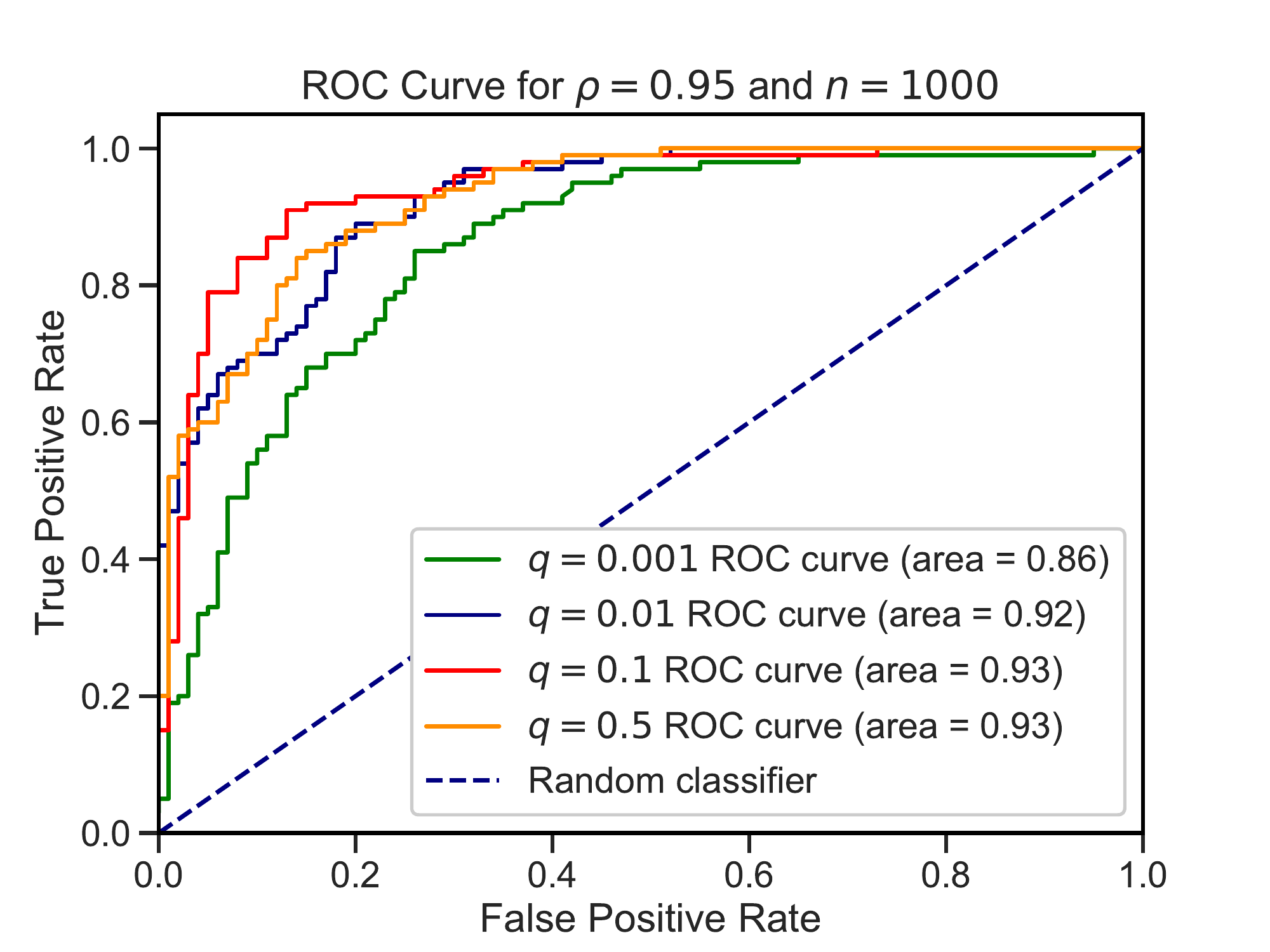} 
    \end{subfigure}\hfill
    \begin{subfigure}{0.5\textwidth}
        \centering
        \includegraphics[width=1\textwidth]{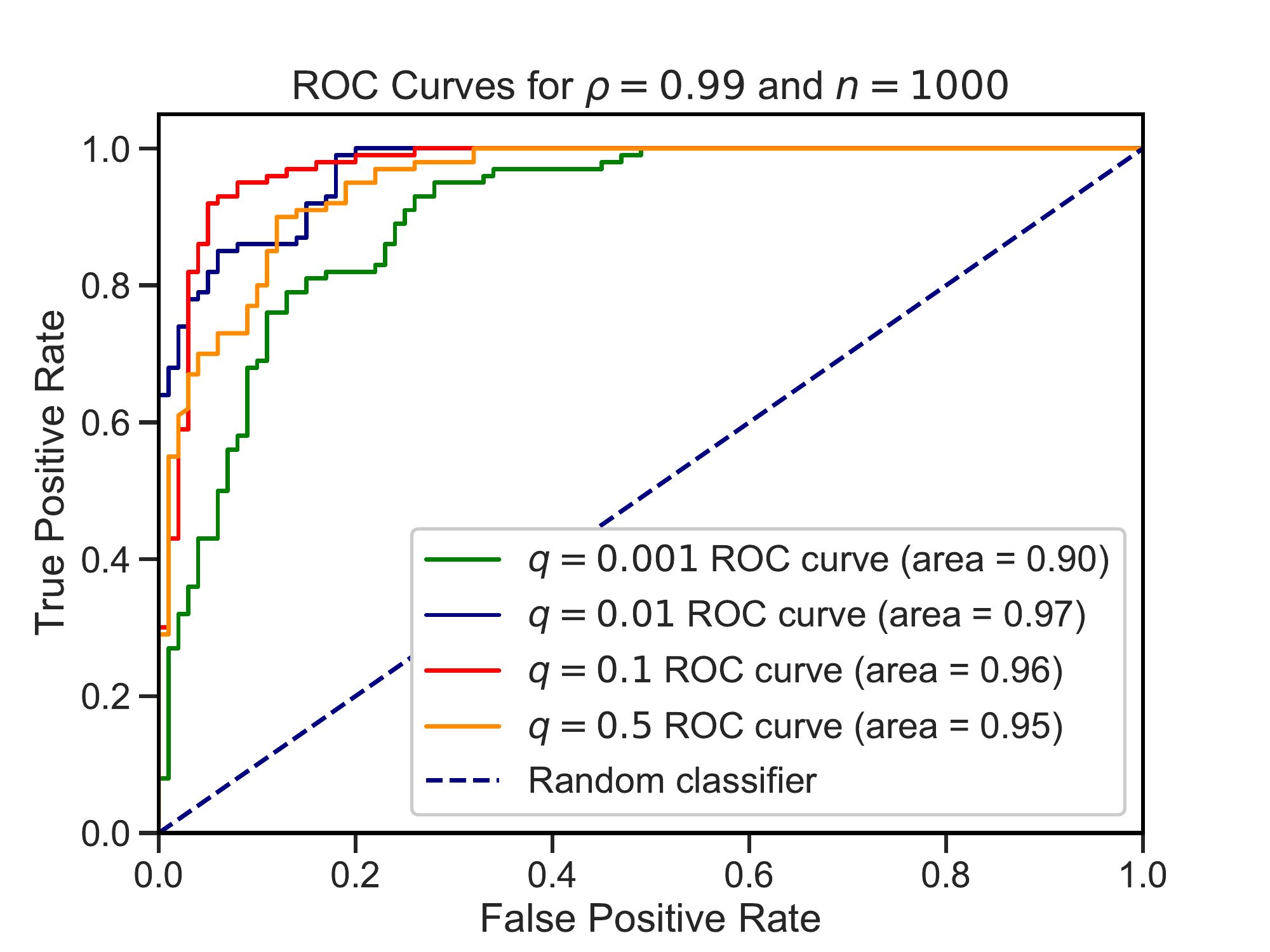} 
    \end{subfigure}
    \caption{
    Comparison of the proposed test statistic
    for fixed correlation parameter $\rho$ and varying edge probability $q$.
    } 
    \label{fig:q_change}
\end{figure}
In \prettyref{fig:K_change_roc}, we fix $n=1000$, $q=0.1$, $\rho=0.95$, and $t=1000$, and vary the tree size $K \in \{2,3,4,5,6,7\}$.
The performance of our test statistic is seen to improve significantly as  $K$ increases. 
In \prettyref{fig:K_change_time}, we plot the median running time of \prettyref{alg:algorithm1} on a pair of random graphs for each $K \in \{2,3,4,5,6,7\}$ when $t=1000$. We observe that the running time increases gradually up to $K=5$ and then rapidly afterwards. This shows a trade-off between 
the statistical performance and the computational complexity as $K$ varies.  
\begin{figure}[ht]
    \centering
    \begin{subfigure}{0.5\textwidth}
        \centering
        \includegraphics[width=1\textwidth]{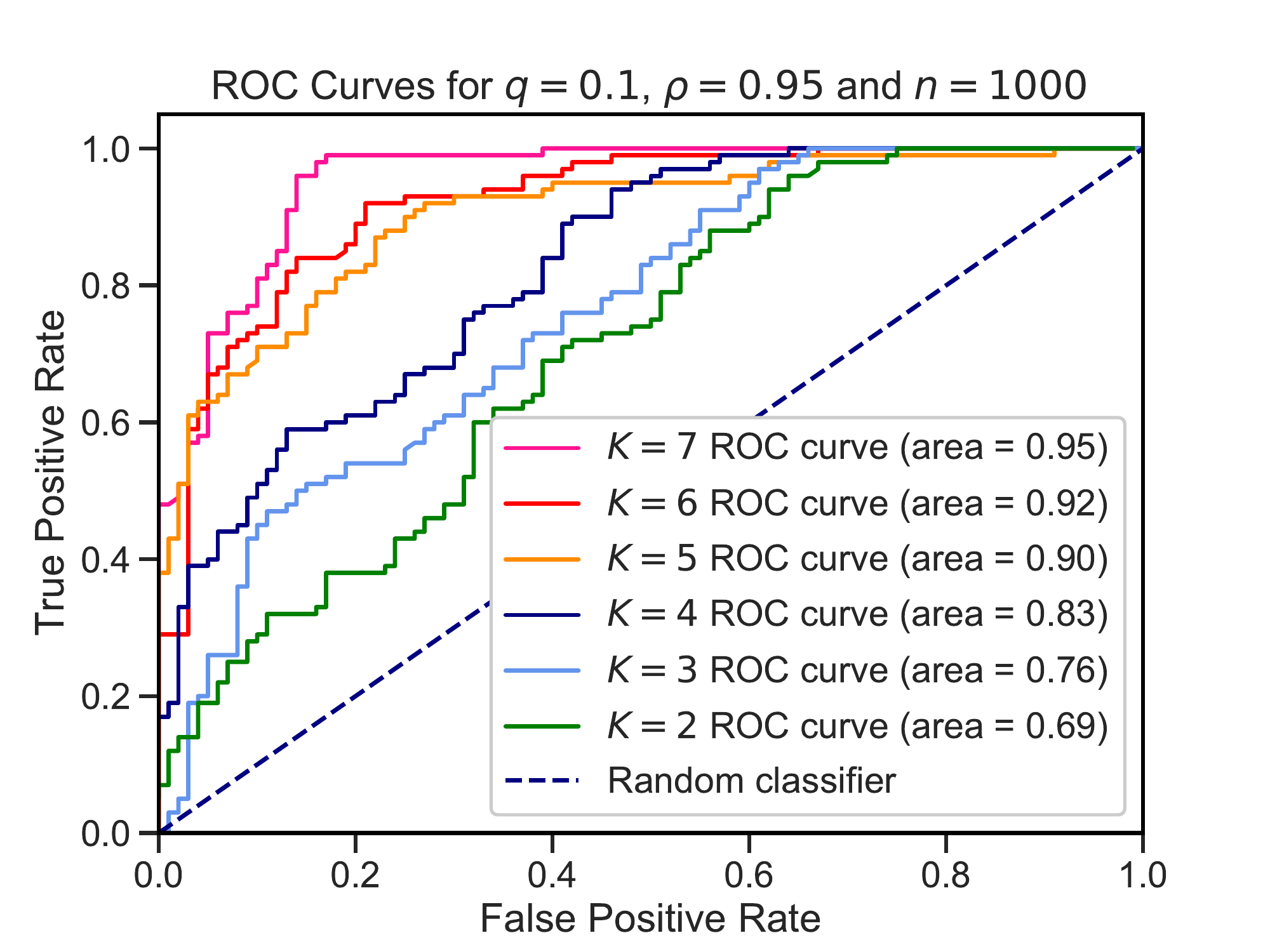} 
        \caption{ROC curves and AUC value. 
        \newline
        }
        \label{fig:K_change_roc}
    \end{subfigure}\hfill 
    \begin{subfigure}{0.5\textwidth}
        \centering
        \includegraphics[width=1\textwidth]{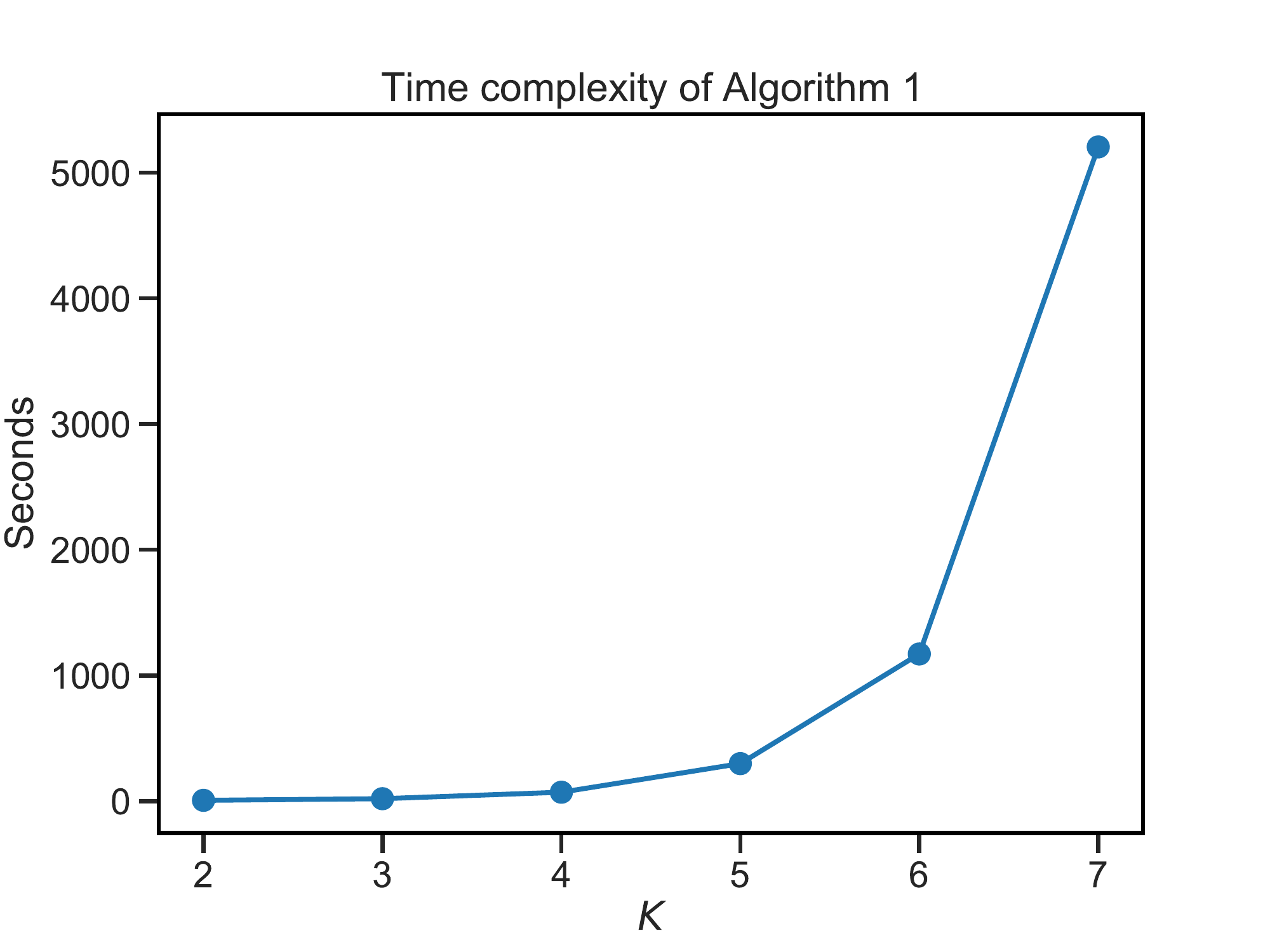} 
        \caption{Median running time of \prettyref{alg:algorithm1} on a single pair of random graphs.}
        \label{fig:K_change_time}
    \end{subfigure}
    \caption{
    Comparison of statistical performance and time complexity of the proposed test statistic with $n=1000$, $q=0.1$, $\rho=0.95$, $t=1000$, and varying tree size $K \in \{2,3,4,5,6,7\}$.
    } 
    \label{fig:K_change}
\end{figure}


Finally, we compare our test statistic with a heuristic variant that is more scalable to large \emph{sparse} graphs. In particular, instead of first centering the adjacency matrices and counting the signed trees
as per~\prettyref{eq:f}, we count the trees in the unweighted graphs in the usual sense
and then subtract their means. This gives rise to the following statistic: 
\begin{align}
    g_\calT(A,B)
    & \beta \triangleq \sum_{[H] \in \calT} \aut(H) \left(W_H(A) - \gamma_H \right) \left(W_H(B) - \gamma_H \right),
    \label{eq:gAB}
\end{align}
where $\beta$ is in \prettyref{eq:beta} and 
$\gamma_H  \triangleq  \expect{W_H(A)} = \expect{W_H(B)} = \frac{\binom{n}{K+1}(K+1)!}{\aut(H)} q^K$.
Note that the form of 
$g_\calT(A,B)$ resembles the test statistic in \cite{barak2019nearly}
as mentioned in~\prettyref{sec:subgraph_count}, but with two crucial differences: 
(a) We restrict our attention to trees as opposed to strictly balanced graphs considered in \cite{barak2019nearly};
(b) The weight in \prettyref{eq:gAB} for each $H$ is proportional to its number of automorphisms.
Now, analogous to $f_{\calT}(A,B)$, we
approximately compute
$g_{\calT}(A,B)$ via color coding. Specifically, we generate $2t$ random coloring $\{\mu_i\}_{i=1}^t$ and $\{\nu_i\}_{i=1}^t$ that map $[n]$ to $[K+1]$, and define
\begin{align*}
    Z_\calT(A,B)
    & \triangleq \sum_{[H] \in \calT} \aut(H) \left(\frac{1}{t}\sum_{i=1}^t X_H (A, \mu_i) - \gamma_H \right)\left(\frac{1}{t}\sum_{j=1}^t X_H (B, \nu_j) - \gamma_H \right) \, ,
\end{align*}
which provides an unbiased estimator for $g_\calT(A,B)$ as
\[
\expect{Z_\calT(A,B)|A,B} = \frac{r^2}{\beta} g_\calT(A,B) \, . 
\]

It turns out that the time complexity for computing $Z_\calT(A,B)$ significantly improves since we work with unweighted sparse graphs as opposed to weighted dense graphs due to centering.  
To see this, 
to compute each $X_H(A,\mu)$,
in the dynamic programming procedure (see \prettyref{eq:X_recursive} in \prettyref{alg:X_M_H}), for every $x\in[n]$, 
we only need to enumerate the neighbors of $x$ rather than all nodes. Thus, the time complexity for computing $X_H(A,\mu)$ becomes $O(m K 3^{K})$,
where $m$ is the number of edges in $A$. This significantly speeds up the computation  when $m \ll n^2$. 

In \prettyref{fig:center_vs_uncenter}, we consider a larger but sparser instance with $n=10000$, $q =0.0001$, $\rho = 0.99$, and $t=1000$, and plot the ROC curve and compute the AUC for $Z_\calT(A,B)$ when $K$ ranges from $2$ to $8$ 
respectively. 
We observe that the performance of $Z_\calT(A,B)$ gets slightly better as $K$ increases from $2$ to $5$; however, even if $K$ is as large as $8$, it still falls short of $Y_\calT(A,B)$ for $K=6$. 
This goes to show the importance of counting \emph{signed} trees in the proposed test statistic. Computationally,
$Z_{\calT}(A,B)$ is much faster to compute than $Y_{\calT}(A,B)$ (600x speedup when $K=6$). 
\begin{figure}[ht]
    \centering
    \includegraphics[width=0.7\textwidth]{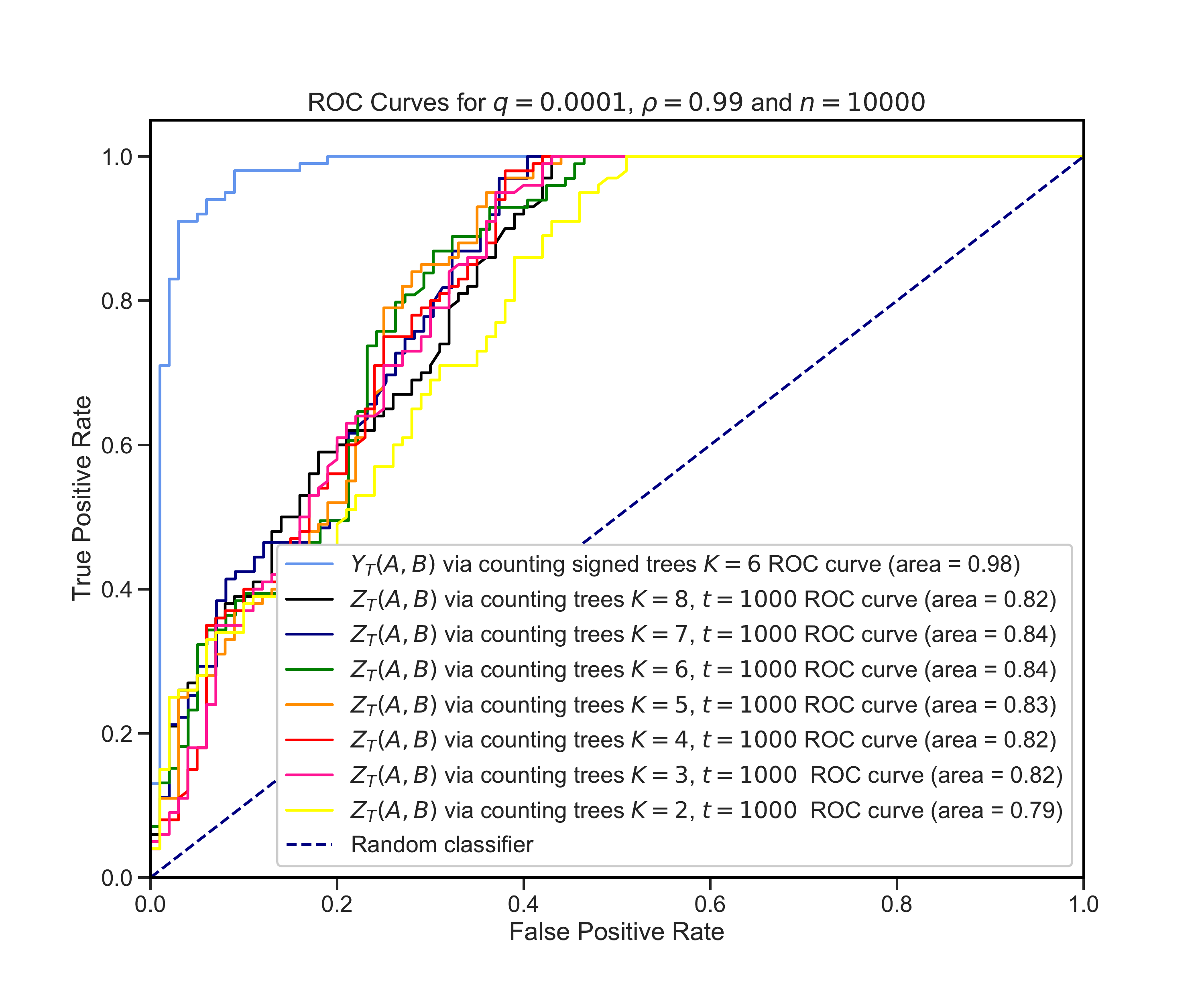} 
    \caption{Comparison of performance of $Y_\calT(A,B)$ when $K=6$, and $Z_\calT(A,B)$ when $K$ ranges from $2$ to $8$, with $n=10000$, $q=0.0001$, $\rho=0.99$, and $t=1000$.
    }
    \label{fig:center_vs_uncenter}
\end{figure}

\section*{Acknowledgment}
     Part of this work was done while the authors were visiting the Simons Institute for the Theory of Computing, participating in the program ``Computational Complexity of Statistical Inference.'' The authors are grateful to Tselil Schramm for suggesting the use of color coding for efficiently counting trees. J.~Xu and S. H.~Yu also would like to thank Louis Hu for suggesting the form of the approximate test statistic by plugging in the averaged subgraph count in~\prettyref{eq:Y_calT_def}.

\appendices

\section{Preliminary facts about graphs}
\label{app:prelim}
		\begin{lemma}
		\label{lmm:O}	
		Let $S,T$ be edge-induced subgraphs of $K_n$.
        \begin{enumerate}[label=(\roman*)]
            \item \label{O:1} 
             Suppose $S \cong T \cong H$ for some graph $H$.
             Let $\pi$ denote the random permutation uniformly distributed over $\calS_n$.
             We have 
            \begin{align*}
                \Prob\left(\pi(S)
                =T\right)
                =\frac{1}{\sub_n(H)}
                = \frac{\aut(H)(n-|V(H)| )!}{n!} \le \left(\frac{|V(H)|}{n}\right)^{|V(H)|}.
            \end{align*} 
            \item \label{O:2}  $V(S \Delta T) \cup \left(V(S) \cap V(T) \right) = V(S)\cup V(T)$, and $ |V(S \Delta T)| + |V(S)\cap V(T)| = |V(S)\cup V(T)|+ |V(S \Delta T)\cap V(S)\cap V(T)|$.
            \item \label{O:3}  
						$ V(S \cup T) = V(S) \cup V(T)$ and $ V(S \cap T) \subset V(S) \cap V(T)$.
            \item \label{O:4}
            Suppose $S$ and $T$ are connected
            and   $|V(S)\cap V(T)|\ge 1$. Then for any $E \subset S \cap T$, 
                \begin{align*}
                     \left|V(S \Delta T) \cup E)\right|
                     &\ge  |V(S)| + |V(T)| - 2 |V(S)\cap V(T)| +|V(E)| + \indc{E=\emptyset, S \neq T}.
                \end{align*}
            \item 
        \label{O:5} For any graphs $H$ and $I$, for any $0\le k \le \min\{|V(H)|,|V(I)|\}$,
        \begin{align*}
            \sum_{S\cong H} \sum_{T \cong I} \indc{ |V(S) \cap V(T)| = k} \le \binom{|V(H)|}{k} \binom{|V(I)|}{k} k! n^{|V(H)|+|V(I)|-k} .
        \end{align*}
        \end{enumerate} 					
		\end{lemma}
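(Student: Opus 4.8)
The plan is to prove the five parts essentially independently: \ref{O:1} is a direct orbit count, \ref{O:2} and \ref{O:3} are elementary vertex bookkeeping, \ref{O:5} is a counting estimate, and the only genuinely delicate point is the connectivity step in \ref{O:4}. For \ref{O:1}, I would note that edge-induced subgraphs carry no isolated vertices, so $\pi(S)=T$ forces $\pi$ to restrict to a graph isomorphism $S\to T$ on $V(S)$ and to act arbitrarily on the remaining $n-|V(H)|$ vertices; since $S\cong T\cong H$ there are exactly $\aut(H)$ such isomorphisms, giving $\Prob(\pi(S)=T)=\aut(H)(n-|V(H)|)!/n!$, which equals $1/\sub_n(H)$ by \eqref{eq:subH}, and the inequality then follows from $\aut(H)\le|V(H)|!$ and $\binom{n}{|V(H)|}\ge(n/|V(H)|)^{|V(H)|}$, since these yield $\sub_n(H)\ge\binom{n}{|V(H)|}\ge(n/|V(H)|)^{|V(H)|}$. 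For \ref{O:3}, $V(S\cup T)=V(S)\cup V(T)$ is immediate from $E(S\cup T)=E(S)\cup E(T)$, and any edge of $E(S)\cap E(T)$ witnesses that its endpoints lie in $V(S)\cap V(T)$; for \ref{O:2}, the only nontrivial inclusion is $V(S)\cup V(T)\subseteq V(S\Delta T)\cup(V(S)\cap V(T))$, which holds because a vertex in $V(S)\setminus V(T)$ is incident to an edge of $S$ that touches a vertex outside $V(T)$ and hence lies in $S\Delta T$, and the displayed counting identity is then inclusion--exclusion $|A|+|B|=|A\cup B|+|A\cap B|$ applied to $A=V(S\Delta T)$, $B=V(S)\cap V(T)$.

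For \ref{O:4}, I would set $W=V(S)\cap V(T)$, $w=|W|\ge1$. The argument in \ref{O:2} already gives $(V(S)\setminus W)\cup(V(T)\setminus W)\subseteq V(S\Delta T)$, while $V(E)\subseteq V(S\cap T)\subseteq W$ by \ref{O:3}; since these two sets are disjoint, $|V((S\Delta T)\cup E)|=|V(S\Delta T)\cup V(E)|\ge(|V(S)|-w)+(|V(T)|-w)+|V(E)|$, which is the asserted bound without the indicator term. The remaining task is to exhibit one extra vertex of $W$ inside $V(S\Delta T)$ whenever $E=\emptyset$ and $S\ne T$. The hard part is here: if no vertex of $W$ lay in $V(S\Delta T)$, then $S$ would have no edge joining $W$ to $V(S)\setminus W$ (such an edge is incident to a vertex of $V(S)\setminus V(T)$, hence is not an edge of $T$, hence lies in $S\Delta T$ and drops its $W$-endpoint into $V(S\Delta T)$); connectivity of $S$ together with $W\ne\emptyset$ then forces $V(S)=W$, whence every edge of $S$ is incident to $W$ and so lies in $T$, i.e.\ $E(S)\subseteq E(T)$; swapping the roles of $S$ and $T$ yields $V(T)=W$ and $E(T)\subseteq E(S)$, so $S=T$, a contradiction. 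I expect this connectivity reduction to be the only step requiring genuine care.

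For \ref{O:5}, the plan is to fix a copy $S\cong H$ --- there are $\sub_n(H)\le n^{|V(H)|}$ of them --- and bound the number of $T\cong I$ with $|V(S)\cap V(T)|=k$ by the number of injections $\phi: V(I)\hookrightarrow[n]$ with $|\phi(V(I))\cap V(S)|=k$ (each valid $T$ arising from at least one such $\phi$): choose the $k$ vertices of $V(I)$ that land in $V(S)$ in $\binom{|V(I)|}{k}$ ways, inject them into $V(S)$ in $|V(H)|(|V(H)|-1)\cdots(|V(H)|-k+1)=\binom{|V(H)|}{k}k!$ ways, and map the remaining $|V(I)|-k$ vertices into $[n]$ in at most $n^{|V(I)|-k}$ ways; multiplying the two bounds gives $\binom{|V(H)|}{k}\binom{|V(I)|}{k}k!\,n^{|V(H)|+|V(I)|-k}$. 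Apart from the connectivity step in \ref{O:4}, every step is routine bookkeeping with vertex sets and injection counts.
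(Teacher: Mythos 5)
Your proposal is correct, and parts \ref{O:1}, \ref{O:2}, \ref{O:3} and \ref{O:5} follow essentially the paper's own arguments (the paper phrases \ref{O:1} via uniformity of $\pi(S)$ over the $\sub_n(H)$ copies of $H$ and \ref{O:5} via choosing vertex sets and then labelings rather than counting injections, but these are the same computations). The one place where you genuinely diverge is the delicate step in \ref{O:4}, namely producing the extra vertex when $E=\emptyset$ and $S\ne T$. The paper first derives the exact inclusion--exclusion identity for $|V((S\Delta T)\cup E)|$ from \ref{O:2}, and then argues that $|V(S\Delta T)\cap (V(S)\cap V(T))|\ge 1$ by a case split: if $S\cap T\neq\emptyset$, connectivity of $S\cup T$ forces the two nonempty edge classes $S\Delta T$ and $S\cap T$ to share a vertex; if $S\cap T=\emptyset$, then $V(S\Delta T)=V(S)\cup V(T)\supseteq V(S)\cap V(T)$. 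You instead run a single reductio: assuming $V(S\Delta T)$ misses $W=V(S)\cap V(T)$, connectivity of $S$ alone forces $V(S)=W$ and then $E(S)\subseteq E(T)$, and by symmetry $S=T$, a contradiction. Both are correct; your route avoids the case split and uses connectivity of $S$ and $T$ separately rather than of $S\cup T$, while the paper's route is slightly shorter once one grants the ``edge-partition of a connected graph shares a vertex'' observation. One stylistic caution: your clause ``every edge of $S$ is incident to $W$ and so lies in $T$'' is only valid because the reductio hypothesis ($V(S\Delta T)\cap W=\emptyset$) is still in force --- an edge of $S\setminus T$ would lie in $S\Delta T$ and place its endpoints (all in $W$, since $V(S)=W$) into $V(S\Delta T)$; you should state that dependence explicitly, since the implication is false without it.
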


        \begin{proof}
       \begin{enumerate}[label=(\roman*)]
            
            \item 						
						Note that $\pi(S)$ is uniformly distributed over the set $\{S' \subset K_n: S' \cong H\}$, whose cardinality is $\sub_n(H)=\sub(H,K_n)$.
						Then $\prob{\pi(S)=T} = \frac{1}{\sub_n(H)}$.
        Applying \prettyref{eq:subH} yields
            \begin{align*}
                \Prob\left(\pi(S)
                =T\right)
                =\frac{\aut(H)(n-|V(H)| )!}{n!} \le \frac{|V(H)|! (n-|V(H)| )!}{n!} \le \left(\frac{|V(H)|}{n}\right)^{|V(H)|},
            \end{align*}
            where the last inequality holds since $\binom{n}{k} \ge \left(\frac{n}{k}\right)^k$.
            \item 
						We remind the reader that in the sequel $S\cup T,S\cap T$ and $S\Delta T$ are edge-induced subgraphs of $K_n$.						
						Since $V(S \Delta T) \subset V(S)\cup V(T)$ and $V(S) \cap V(T) \subset V(S)\cup V(T)$, we have  
            $
            V(S \Delta T) \cup \left(V(S) \cap V(T) \right) \subset V(S)\cup V(T).
            $
            Next, we show 
            $
            V(S)\cup V(T) \subset V(S \Delta T) \cup \left(V(S) \cap V(T) \right).
            $
            It suffices to show that $V(S)\Delta V(T) \subset  V(S \Delta T)$. Indeed, for any $i \in V(S)\cap V(T)^c$, 
						there exists some $j\in V(S)$ such that $(i,j)\in S \cap T^c$.
						Similarly, for any $i \in V(T)\cap V(S)^c$, there exists some $j\in V(T)$ such that $(i,j)\in T \cap S^c$.
						%
						%
            Then, $V(S \Delta T) \cup \left(V(S) \cap V(T) \right) = V(S)\cup V(T)$, which implies that 
            $ 
            |V(S \Delta T)| + |V(S)\cap V(T)| = |V(S)\cup V(T)|+ |V(S \Delta T)\cap \left(V(S)\cap V(T)\right)|.
            $ 
            \item  
            By definition, 
            we have $ V(S \cup T) = V(S) \cup V(T)$, and 
						$ V(S \cap T) \subset V(S) \cap V(T)$ follows because for any $i \in V(S \cap T)$, there exists some $j$ such that $(i,j)\in S \cap T$ and thus $i\in V(S) \cap V(T)$. 
            \item 
						For $E \subset S \cap T$, we have 
                \begin{align*}
                    \left|V\left( (S \Delta T) \cup E\right)\right|
                    = |V(S \Delta T) \cup V(E)| = |V(S \Delta T ) |+ |V(E) |-| V(S \Delta T )\cap V(E)|
                \end{align*}
                where the first equality applies part \ref{O:3}.
								Furthermore, 
								it follows from part \ref{O:2} that $|V(S \Delta T)| =  |V(S)|+|V(T)| - 2 |V(S)\cap V(T)| +|V(S \Delta T)\cap \left(V(S)\cap V(T)\right)|$.
								Thus
								\begin{align*}
                     \left|V\left( (S \Delta T) \cup E\right)\right|
                     & = |V(S)|+ |V(T)| - 2 |V(S)\cap V(T)| + |V(E)| \\
                     &~~~~ + |V(S \Delta T)\cap \left(V(S)\cap V(T)\right)|  -| V(S \Delta T) \cap V(E)| \numberthis \label{eq:VSTE}\\
										& \geq |V(S)|+ |V(T)| - 2 |V(S)\cap V(T)| + |V(E)|,										
                \end{align*}
								where the last step follows from $V(E)\subset V(S) \cap V(T)$
								since $E \subset S \cap T$.
                
  It remains to consider the special case of $E\neq \emptyset$ and $S\neq T$. 
	Continuing \prettyref{eq:VSTE}, it suffices to verify that $|V(S \Delta T)\cap \left(V(S)\cap V(T)\right)|\geq 1$.
	Consider two cases.
	\begin{itemize}
		\item Suppose that $S \cap T \neq \emptyset$. By assumption, $|V(S) \cap V(T)|\ge 1$ and $S, T$ are connected. Thus $S\cup T$ is also connected. Then $ V(S \Delta T)\cap V(S\cap T) \neq \emptyset$, and  
	    $$|V(S \Delta T)\cap \left( V(S)\cap V(T) \right)| \ge |V(S \Delta T)\cap V(S\cap T)|\ge 1 \, , $$
	    where the first inequality holds by part \ref{O:3}.
  
			\item Suppose that $S \cap T = \emptyset$, then $S \cup T = S \Delta T$ and $V(S \Delta T) = V(S)\cup V(T)$ by part \ref{O:3}.
			Thus, 
			$$|V(S \Delta T)\cap \left( V(S)\cap V(T) \right)| =| V(S)\cap V(T) | \ge  1 \, , $$ 
        where the last inequality holds by our standing assumption that $| V(S)\cap V(T) | \ge 1 $. 
  \end{itemize}              
	This concludes the proof of part \ref{O:4}.
	

        \item 
        We have 
                \begin{align*}
                    & \sum_{S \cong H} \sum_{T\cong I} \indc{ |V(S) \cap V(T)| = k}  \\
                    & \le \binom{n}{|V(H)|}\binom{|V(H)|}{k} \binom{n-|V(H)|}{|V(I)|-k} |V(H)|! |V(I)|! \\
                    & = \frac{n!}{(n-|V(H)|)!} \frac{(n-|V(H)|)!}{(n-|V(H)|-|V(I)|+k)!}\binom{|V(H)|}{k}\binom{|V(I)|}{k} k! 
                    \nonumber \\
                    & \le \binom{|V(H)|}{k}\binom{|V(I)|}{k} k! n^{|V(H)|+|V(I)|-k} \, . 
                \end{align*}
        \end{enumerate}
        \end{proof}

\section{Postponed proofs in~\prettyref{sec:stats}}\label{app:stats}

  \subsection{Proof of \prettyref{prop:ER_var_calP}}
		\label{app:ER_var_calP}

  By the discussion in \prettyref{sec:pf-sk-var}, it suffices to bound the two terms \prettyref{eq:ER_varP_termI} and \prettyref{eq:ER_varP_termII}.

 
   \textbf{Bounding term $\termI$.} 
Recall from \prettyref{eq:L_phi_s} that $\left\langle L, \phi_\sfS\right \rangle =a_H$ and by definition $a_H \sub_n(H) = \rho^{|E(H)|} = \rho^{K}$ for all $H \in \calH$. 
Then
\begin{equation}
\Expect_\calP[f_\calH] = |\calH| \rho^{2K}.
\label{eq:EPfH}
\end{equation}
Therefore, applying \prettyref{eq:L_phi_S_T_disjoint} in \prettyref{lmm:L_phi_S_T} yields that 
        \begin{align*}
            \termI
            &  \le \sum_{[H]\in \calH}a_H \sum_{[I]\in \calH}a_I \sum_{S_1\cong H} \sum_{S_2 \cong H} \sum_{T_1\cong I}\sum_{T_2\cong I}a_H a_I \left(\gamma\left(1+\indc{H\cong I}\right) -1\right) \nonumber \\
            & = \sum_{[H]\in \calH} a_H^2 \sum_{[I]\in \calH} a_I^2  \sub_n(H)^2 \sub(I)^2 \left(\gamma\left(1+\indc{H\cong I}\right) -1\right) \nonumber \\
            & = \left(\gamma-1\right) \pth{\sum_{[H]\in \calH}  a_H^2 \sub_n(H)^2 }^2 +  \gamma \sum_{[H]\in \calH } a_H^4  \sub_n(H)^4  \nonumber \\
            & = |\calH|^2 \rho^{4 K}\left( \gamma-1\right) +  |\calH| \rho^{4 K} \gamma \, .
        \end{align*}
        Recall $
        \gamma = \exp\left(\frac{\NumVer^2}{n-2\NumVer+1}\right)$. 
				In view of the definition \prettyref{eq:PhiH}, by choosing $J$ to be a single vertex we have $\Phi_H \le n$ and hence $\Phi_{\calH} \le n$. 
				Thus, by the assumption \prettyref{eq:prop2}, we have $M=o(\log n)$. Thus $\gamma=1+o(1)$. Moreover, by \prettyref{eq:prop1},				
        $|\calH|=\omega(\rho^{-2K})=\omega(1)$.
        It follows that 
        \begin{align}
           \frac{\termI}{\left(\Expect_\calP \left[f_\calH\right]\right)^2} \le \left(\gamma-1\right) + |\calH|^{-1} \gamma = o(1). \label{eq:varp_tree_termI}
        \end{align}
    
  \textbf{Bounding term $\termII$.}    
   For any $S,T\subset \binom{[n]}{2}$, define
            \begin{align}
            \ell(S,T) \triangleq \left(\frac{2\NumVer}{n}\right)^{-|V(S)\cap V(T)|} h(S,T)\, . \label{eq:lst}
            \end{align}
            In the sequel, we will need the following auxiliary result. 
  \begin{lemma} 
        \label{lmm:bk}
        For any $H,I$ in $\calH$ with $K$ edges and at most $\NumVer$ vertices,
        \begin{align*}
             &\sum_{S\cong H} \sum_{T\cong I} \ell(S,T) \indc{|V(S) \cap V(T) | \ge 1} \le  n^{|V(H)|+|V(I)| } b_K ,
        \end{align*}
        where 
       \[
        b_K \triangleq 
           \frac{ 2^{K}(2 \NumVer)^{\frac{\NumVer}{2}} 2^M }{ 
           \sqrt{ \Phi_{\calH} }}  \, ,
        \]
        where $\Phi_\calH = \min_{H \in \calH} \Phi_H$ is defined in \prettyref{prop:ER_var_calP} -- see \prettyref{eq:PhiH}.
        \end{lemma}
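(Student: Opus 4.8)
The plan is to combine the pointwise upper bound on $h(S,T)$ from \prettyref{lmm:L_phi_S_T} with the vertex-overlap counting estimate in \prettyref{lmm:O}\ref{O:5}. First I would recall from \prettyref{eq:lst} that, whenever $|V(S)\cap V(T)|=k$, one has $\ell(S,T)=(n/(2\NumVer))^{k}\,h(S,T)$. Next, observe that $\Phi_S$ depends only on the isomorphism class of $S$ (it is defined in \prettyref{eq:PhiH} through the quantities $n^{|V(J)|}q^{|E(J)|}$ over subgraphs $J$), so $\Phi_S=\Phi_H\ge\Phi_\calH$ for every $S\cong H$ with $[H]\in\calH$, and likewise $\Phi_T\ge\Phi_\calH$. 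Hence \prettyref{eq:upper_bound_h} in \prettyref{lmm:L_phi_S_T} gives the uniform bound
\[
h(S,T)\ \le\ \frac{2^{K}(2\NumVer)^{\NumVer/2}}{\sqrt{\Phi_\calH}}
\]
valid for all $S\cong H$, $T\cong I$. Pulling this factor out of the double sum reduces the claim to bounding $\sum_{S\cong H}\sum_{T\cong I}(n/(2\NumVer))^{|V(S)\cap V(T)|}\indc{|V(S)\cap V(T)|\ge 1}$.

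For the remaining sum I would split according to the overlap size $k=|V(S)\cap V(T)|$, which ranges over $1,\dots,\min\{|V(H)|,|V(I)|\}$, and apply \prettyref{lmm:O}\ref{O:5} to get $\sum_{S\cong H}\sum_{T\cong I}\indc{|V(S)\cap V(T)|=k}\le\binom{|V(H)|}{k}\binom{|V(I)|}{k}k!\,n^{|V(H)|+|V(I)|-k}$. Collecting terms this produces a factor $n^{|V(H)|+|V(I)|}$ times $\sum_{k\ge 1}(2\NumVer)^{-k}\binom{|V(H)|}{k}\binom{|V(I)|}{k}k!$. To close, I would use the elementary estimate $\binom{|V(I)|}{k}k!\,(2\NumVer)^{-k}\le(|V(I)|/(2\NumVer))^{k}\le 2^{-k}\le 1$, which holds since $|V(I)|\le\NumVer$, so that the surviving sum is at most $\sum_{k=0}^{|V(H)|}\binom{|V(H)|}{k}=2^{|V(H)|}\le 2^{\NumVer}$. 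Multiplying the three collected factors yields exactly $n^{|V(H)|+|V(I)|}\,b_K$ with $b_K=2^{K}(2\NumVer)^{\NumVer/2}2^{\NumVer}/\sqrt{\Phi_\calH}$, as claimed.

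Since every step is an inequality chain with fully explicit constants, I do not anticipate a genuine obstacle. The only point requiring care is ensuring the uniform bound on $h(S,T)$ is applied with the correct denominator $\Phi_\calH$ rather than the instance-dependent $\max\{\Phi_S,\Phi_T\}$; this is immediate once one notes that $\Phi_S=\Phi_H$ and $\Phi_T=\Phi_I$ are isomorphism invariants and that $[H],[I]\in\calH$, so both are at least $\Phi_\calH=\min_{[H']\in\calH}\Phi_{H'}$.
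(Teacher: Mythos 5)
Your proposal is correct and follows essentially the same route as the paper's proof: pull out the uniform bound $h(S,T)\le 2^{K}(2\NumVer)^{\NumVer/2}/\sqrt{\Phi_\calH}$ from \prettyref{eq:upper_bound_h} (the paper likewise tacitly uses that $\Phi_S=\Phi_H\ge\Phi_\calH$), then stratify by overlap size $k$ via \prettyref{lmm:O}\ref{O:5}, and bound $\sum_k\binom{|V(H)|}{k}\binom{|V(I)|}{k}k!(2\NumVer)^{-k}$ by $2^{\NumVer}$ using $|V(I)|\le\NumVer$. The only cosmetic difference is the exact elementary estimate used for that last binomial sum, which yields the same factor $2^{\NumVer}$.
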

        
        \begin{proof}[Proof of \prettyref{lmm:bk}]
    By \prettyref{eq:upper_bound_h}, 
        \begin{align}
            \sum_{S\cong H} \sum_{T\cong I} \ell(S,T)\indc{|V(S) \cap V(T) | \ge 1}
            & \le \frac{ 2^{K}(2 \NumVer)^{\frac{\NumVer}{2}}}{ 
           \sqrt{ \Phi_{\calH} }}  \sum_{S\cong H} \sum_{T\cong I}  \left(\frac{2M}{n}\right)^{-|V(S)\cap V(T)|}  \indc{|V(S) \cap V(T) | \ge 1}. \label{eq:tree_overl_calc}
     \end{align}
    Now, in view of \prettyref{lmm:O}\ref{O:5}, we have that 
            $$
            \sum_{S\cong H} \sum_{T\cong I} \indc{ |V(S) \cap V(T)| = k} \le \binom{|V(H)|}{k} \binom{|V(I)|}{k} k! n^{|V(H)|+|V(I)|-k}  \, ;
            $$
   Therefore, 
   \begin{align*}
           & \sum_{S\cong H} \sum_{T\cong I}  \left(\frac{2M}{n}\right)^{-|V(S)\cap V(T)|}  \indc{|V(S) \cap V(T) | \ge 1}\\
           & \le \sum_{k=1}^{|V(H)|} \sum_{S\cong H} \sum_{T\cong I} \indc{ |V(S) \cap V(T)| = k} \left(\frac{2M}{n}\right)^{- k} \\
           &\le \sum_{k=1}^{|V(H)|} \binom{|V(H)|}{k}  \binom{|V(I)|}{k}  k! n^{|V(H)|+|V(I)|-k}\left(\frac{2M}{n}\right)^{- k} \\
           & \le 2^M n^{|V(H)|+|V(I)|}  \, ,
        \end{align*}
      where the last inequality holds because 
      \begin{align*}
         \sum_{k=1}^{|V(H)|}  \binom{|V(H)|}{k}  \binom{|V(I)|}{k}  k! \left(2M\right)^{- k} & \le \sum_{k=1}^{|V(H)|}  \binom{|V(H)|}{k}  M^k \left(2M\right)^{- k} \\
         & = \sum_{k=1}^{|V(H)|}  \binom{|V(H)|}{k} 2^{-k} \le 2^M \,.
      \end{align*}
      The proof is completed by combining the last display with~\prettyref{eq:tree_overl_calc}. 
    \end{proof}   

\bigskip            
  Now, let us return to the proof of~\prettyref{prop:ER_var_calP}. Fix any $H,I \in \calH$. Fix $\sfS=(S_1,S_2)$ and $\sfT=(T_1,T_2)$ such that $S_1\cong S_2 \cong H$ and $T_1\cong T_2 \cong I$, and $|V(S_1\cap T_1)| \ge 1, |V(S_2\cap T_2)| \ge 1$.
      Applying \prettyref{eq:L_phi_S_T_not_disjoint} in \prettyref{lmm:L_phi_S_T} yields that 
    \begin{align}
            & \left\langle L, \phi_\sfS \phi_\sfT \right \rangle  \indc{\left\langle L, \phi_\sfS \phi_\sfT \right \rangle  \ge 0} \nonumber \\
            & \le 
            \left(\frac{2\NumVer}{n}\right)^{|V(H)|+|V(I)| - |V(S_1)\cap V(T_1)|-|V(S_2)\cap V(T_2)|}\biggl\{\indc{\sfS = \sfT}+ h(S_2,T_2) \indc{S_1=T_1} \nonumber \\
            &~~~~ + h(S_1, T_1) \indc{S_2=T_2} +h(S_1,T_1) h(S_2,T_2) \biggr\} \nonumber \\
            & = 
            \indc{\sfS = \sfT} + \left(\frac{2\NumVer}{n}\right)^{ |V(H)|} \ell(S_1,T_1) \indc{S_2= T_2} + \left(\frac{2\NumVer}{n}\right)^{|V(H)|}\ell(S_2,T_2) \indc{S_1= T_1} + \nonumber \\
            &~~~~  \left(\frac{2\NumVer }{n}\right)^{|V(H)|+|V(I)|} \ell(S_1,T_1) \ell(S_2,T_2) \, . \label{eq:L_phi_S_T_trees}
            \end{align}

   Combining \prettyref{eq:L_phi_S_T_trees} with~\prettyref{lmm:bk} yields that 
    \begin{align}
            & \sum_{S_1\cong H} \sum_{S_2\cong H} \sum_{T_1\cong I}\sum_{T_2\cong I}  \left\langle L, \phi_\sfS \phi_\sfT \right \rangle \indc{\left\langle L, \phi_\sfS \phi_\sfT \right \rangle\ge 0} \cdot \indc{|V(S_1\cap T_1)| \ge 1,\, |V(S_2\cap T_2)| \ge 1} \notag \\
            & \le \sum_{S_1\cong H} \sum_{S_2\cong H} \sum_{T_1\cong I}\sum_{T_2\cong I} \biggl\{ \indc{\sfS = \sfT} +  \left(\frac{2\NumVer}{n}\right)^{ |V(H)|} \ell(S_1,T_1)\indc{S_2= T_2}  + \notag \\
            &~~~~ \left(\frac{2\NumVer}{n}\right)^{|V(H)|}\ell(S_2,T_2) \indc{S_1= T_1}+ \left(\frac{2\NumVer}{n}\right)^{|V(H)|+|V(I)|} \ell(S_1,T_1) \ell(S_2,T_2)\biggr\} \nonumber \\  
            &~~~~ \indc{|V(S_1\cap T_1)| \ge 1,\, |V(S_2\cap T_2)| \ge 1}  \notag \\
            & \le \sub_n(H)^2 \indc{H\cong I} + 2 \sub_n(H)\left(2\NumVer\right)^{\NumVer} n^{|V(H)|} b_K \indc{H\cong I} +  \notag \\
            &~~~~ \left(2\NumVer\right)^{2\NumVer} n^{|V(H)|+|V(I)|} b_K^2 \, . \label{eq:sum-bk}
        \end{align}


       Define
        \begin{align*}
            c_K  & \triangleq  \rho^{-2K} 2^{ 2\NumVer} \NumVer^{4\NumVer} b_K^2 = \frac{ \rho^{-2K}  \NumVer^{5 \NumVer}  2^{ 5\NumVer + 2K+2} }{  \Phi_{\calH} } \, . 
        \end{align*}
		Summing over $H,I\in\calH$, it follows from \prettyref{eq:sum-bk} that 
           \begin{align*}
            \termII 
            & \le  \sum_{[H]\in \calH}a_H \sum_{[I]\in \calH}a_I\biggl\{(\sub_n(H)^2 \indc{H\cong I}  + 2 \sub_n(H)\left(2\NumVer\right)^{\NumVer} n^{|V(H)|} b_K \indc{H\cong I} + \\ 
            &~~~~ \left(2\NumVer\right)^{2\NumVer} n^{|V(H)|+|V(I)|} b_K^2 \biggr\} \\
            &= \sum_{[H]\in \calH}a_H^2 \left(\sub_n(H)^2 + 2 \sub_n(H)\left(2\NumVer\right)^{\NumVer} n^{|V(H)|} b_K \right) +  
						\pth{\sum_{[H]\in \calH} a_H \left(2\NumVer\right)^{\NumVer}n^{|V(H)|} b_K}^2 \\
            &  \overset{(a)}{=}  \sum_{[H]\in \calH} \left( \rho^{2K} + 2 a_H \rho^{K} \left(2\NumVer\right)^{ \NumVer} n^{|V(H)|} b_K \right) + 
						\pth{\sum_{[H]\in \calH} a_H \left(2\NumVer\right)^{\NumVer}n^{|V(H)|} b_K}^2 \\
            &  \overset{(b)}{\le} |\calH|\rho^{2K} \left( 1 + 2 \rho^K \sqrt{c_K} \right) + |\calH|^2\rho^{4 K} c_K , 
            \end{align*}
        where $(a)$ holds as $a_H \sub_n(H) = \rho^{|E(H)|} = \rho^K$; $(b)$ holds by \prettyref{lmm:O}\ref{O:1} as $a_H \le \rho^{K} \left(\frac{M}{n}\right)^{|V(H)|} $ and 
				$|V(H)|\leq M$ for all $H\in\calH$.
        Then, using \prettyref{eq:EPfH}, we have
        \begin{align}
            \frac{\termII}{\left(\Expect_{\calP}[f_{\calH}]\right)^2} 
            & \le \left(|\calH|\rho^{2K}\right)^{-1} \left( 1 + 2 \rho^{K}  \sqrt{c_K} \right) + c_K = o(1) \, , \label{eq:varp_tree_termII} 
        \end{align}
        where the equality holds because $|\calH|\rho^{2K} =\omega(1)$ by \prettyref{eq:prop1} and $c_K = o(1)$ by \prettyref{eq:prop2}. 
        Hence, combining \prettyref{eq:varp_tree_termI} and \prettyref{eq:varp_tree_termII}, 
       we obtain the desired result 
       \[
        \Var_{\calP}[f_{\calH}]  = o\left(\left(\Expect_{\calP}[f_{\calH}]\right)^2\right) \, .
       \]

     Finally, it remains to show that in the special case of trees, namely $\calH=\calT$ and $M=K+1$, the condition~\prettyref{eq:K_condition} ensures that $\calT$ satisfies both \prettyref{eq:prop1} and \prettyref{eq:prop2}. 
     By \prettyref{eq:otterfull}, 
     under the assumption that $\rho^2 > \alpha$ and $K=\omega(1)$, we get \prettyref{eq:prop1}. 
     Moreover, for any tree $H \in \calT$ and any subgraph $J\subset H$, $J$ must be a forest and hence satisfies
		$|V(J)|\geq |E(J)|+1$.
		Thus 
    \begin{align*}
				\Phi_H = \min_{J \subset H, |V(J)| \ge 1} n^{|V(J)|}q^{|E(J)|} 
         =\begin{cases}
         n^{K+1} q^{K} & \text{ if } nq <1 \\
         n  & \text{ if } nq \ge 1
         \end{cases} \, 
     \end{align*}
     and by definition $\Phi_\calT = \Phi_H$.

Under the condition~\prettyref{eq:K_condition}, 
we have $K \le \frac{\log n}{2 \log (\frac{1}{nq})}$ and hence $n^{K+1}q^K \ge \sqrt{n}$. 
Therefore $\Phi_H \ge \sqrt{n}$.
Moreover, \prettyref{eq:K_condition} also implies that $K \le \frac{\log n}{16 \log \log n }$. Since $\rho^2 > \alpha>1/4$, 
the desired \prettyref{eq:prop2} follows from
     \[ 
            \rho^{-2K} (K+1)^{5K+5} 2^{7K+5} \le 2^{9K+5} (K+1)^{5K+5} = o(\sqrt{n}) \, . 
    \]
    \subsection{Proof of \prettyref{lmm:L_phi_S_T}} \label{sec:proof_L_phi_S_T}
    Fix $\sfS=(S_1,S_2)$ and $\sfT=(T_1,T_2)$ such that $S_1 \cong S_2 \cong H$ and $T_1 \cong T_2 \cong I$ where $H$ and $I$ that are connected graphs with $K$ edges and at most $M$ vertices.  
    
				
				
\textbf{Case~(i)}: $V(S_1) \cap V(T_1) = \emptyset$ and $V(S_2) \cap  V(T_2) = \emptyset$.
In this case, we have $S_1 \cap T_1 =\emptyset$ and $S_2 \cap T_2 = \emptyset$. 
        By change of measure and the definition of $\phi_\sfS$ given in~\prettyref{eq:phi_S}, we get that
        \begin{align}
            \left\langle L, \phi_\sfS \phi_\sfT \right \rangle 
            & = 
            \Expect_{\calP}\left[\phi_\sfS(A,B)\phi_\sfT(A,B)\right] \nonumber \\
            & = \Expect_{\pi} \Expect_{\calP|\pi}\left[\sigma^{-2(|E(H)|+|E(I)|)}\prod_{(i,j)\in S_1\cup T_1}\bar{A}_{ij} \prod_{(k,\ell)\in S_2 \cup T_2}\bar{B}_{k,\ell}\right] \nonumber \\
            & = \rho^{|E(H)|+ |E(I)| }\Prob\left(\pi(S_1\cup T_1) = S_2 \cup T_2\right) \nonumber\\
            & = a_H a_I \frac{\Prob\left(\pi(S_1\cup T_1) = S_2 \cup T_2\right) }{\Prob\left(\pi(S_1) = S_2\right)\Prob\left(\pi(T_1) = T_2\right)}. \label{eq:Lemma_1_case_1}
        \end{align}

        Suppose $\pi(S_1\cup T_1) = S_2 \cup T_2$.  
        Since $ V(S_1)\cap V(T_1) = \emptyset$, $ \pi(V(S_1)) \cap  \pi(V(T_1)) = \emptyset$. Since $S_1 \cong S_2 \cong H$ and $T_1 \cong T_2 \cong I$ where $H$ and $I$ that are connected graphs, given $\pi(V(S_1)) \cap  \pi(V(T_1)) = \emptyset$ and $V(S_2) \cap V(T_2) = \emptyset$,
        we must have either
        $\pi(S_1) = S_2$ and $\pi(T_1) = T_2$, or $\pi(S_1) = T_2$ and $\pi(T_2) = S_2 $. 
Hence,
       \begin{align*}
                 \Prob\left(\pi(S_1\cup T_1) = S_2 \cup T_2\right) 
                 & = \Prob\left(\pi(S_1) = S_2,\pi(T_1) = T_2\right)+\Prob\left(\pi(S_1) = T_2,\pi(S_1) = T_2\right) \indc{H \cong I} \\
                 & =  \Prob\left(\pi(S_1) = S_2,\pi(T_1) = T_2\right)\left(1+\indc{H \cong I}\right).
      \end{align*}
    Note that 
      $$ \Prob\left(\pi(S_1) = S_2, \pi(T_1)=T_2\right) = \frac{(n-|V(H)|-|V(I)| )! \aut(H) \aut(I) }{n!} \, .
      $$
      Since $\Prob\left(\pi(S_1) = S_2\right)= \frac{(n-|V(H)|)!\aut(H)}{n!}$ and $\Prob\left(\pi(T_1) = T_2\right) =  \frac{(n-|V(I)|)!\aut(I)}{n!}$, we have
        \begin{align}
          \frac{\Prob\left(\pi(S_1) = S_2, \pi(T_1)=T_2\right)}{\Prob\left(\pi(S_1) = S_2\right) \Prob\left(\pi(T_1) = T_2\right)}  
                & = \frac{n!(n-|V(H)|-|V(I)| )! }{(n-|V(I)|)!(n-|V(H)|)!}\nonumber \\
                & \overset{(a)}{\le} \exp\left( \frac{|V(H)||V(I)|}{n-|V(H)|-|V(I)|+1}\right)\nonumber\\
                & \overset{(b)}{\le}\gamma \, , \label{eq:prob_ratio}
            \end{align}
            where $(a)$ holds because for any $\ell,m \in \naturals$ such that  $\ell+m \le n$, 
            \begin{align*}
              \frac{n!(n-\ell-m)!}{(n-\ell)!(n-m)!} 
                 = \prod_{k=n-\ell+1}^{n} \frac{k}{k-m} \le \left(\frac{n-\ell+1}{n-\ell-m+1}\right)^{\ell}
                \le  \exp\left(\frac{\ell m}{n-\ell-m+1}\right),
            \end{align*}
            where the last inequality holds because for any $x\in \reals$, $1+x \le \exp(x)$; $(b)$ holds because  $|V(H)|,|V(I)| \le M$
            and $\gamma \triangleq \exp\left(\frac{M^2}{n-2M+1}\right)$. Combining~\prettyref{eq:Lemma_1_case_1} and~\prettyref{eq:prob_ratio}, we arrive at the desired bound:
             \begin{align*}
                0\le \left\langle L, \phi_\sfS \phi_\sfT \right \rangle 
                & \le a_H a_I \gamma \left(1+\indc{H\cong I}\right) \, .
            \end{align*}
            
        \textbf{Case~(ii)}: $V(S_1) \cap V(T_1) = \emptyset$ and $V(S_2) \cap  V(T_2) \neq \emptyset$, or $V(S_1) \cap V(T_1) \ne \emptyset$ and $V(S_2) \cap  V(T_2) = \emptyset$.
        While this case itself is easy, we set up some notation for both Case~2 and Case~3 below. 
        By  change of measure, 
        \begin{align}
            & \left\langle L, \phi_\sfS \phi_\sfT\right \rangle =  \Expect_{\calP}\left[\phi_\sfS(A,B)\phi_\sfT(A,B)\right] = \Expect_{\pi} \left[g(\pi)\right], \label{eq:term2_L_st}
        \end{align}
        where 
        \begin{align*}
            g(\pi) 
            & \triangleq \Expect_{\calP|\pi}\left\{\sigma^{-2|E(H) |-2 |E(I)|}\prod_{(i,j)\in S_1 \cap T_1} \bar{A}_{ij}^2 \prod_{(i,j)\in S_1 \Delta T_1}\bar{A}_{ij} \prod_{(k,\ell)\in S_2 \cap T_2 }\bar{B}_{k,\ell}^2 \prod_{(k,\ell)\in S_2 \Delta T_2 }\bar{B}_{k,\ell}\right\}.
        \end{align*}
        
        Given a permutation $\pi$, we define 
        $$K_{11} \triangleq S_1 \Delta T_1 \cap \pi^{-1} \left(  S_2 \Delta T_2 \right),\quad K_{12} \triangleq\left(S_1 \Delta T_1\right)\cap \pi^{-1} \left(S_2 \cap T_2\right) \, , $$ 
        $$K_{21} \triangleq \left(S_1 \cap T_1\right) \cap \pi^{-1}\left(S_2 \Delta T_2\right),\quad K_{22} \triangleq \left(S_1 \cap T_1\right) \cap \pi^{-1} \left(S_2 \cap T_2 \right) \, ,$$
        and 
        $$
        K_{20} \triangleq \left(S_1 \cap T_1\right) \backslash \left(K_{21}\cup K_{22}\right),
        \quad 
        K_{02} \triangleq \left(S_2 \cap T_2\right) \backslash \left(K_{12}\cup K_{22}\right) .
        $$
        In the sequel, we will split the product in the definition of $g(\pi)$ according to the mutually exclusive sets $K_{\ell m}$ defined above, where $0\le \ell, m \le 2$ and $2 \le \ell + m \le 4$. 
        Note that, if there exists $(i,j)\in S_1 \Delta T_1$
        such that $\pi(i,j) \not \in S_2 \cup T_2$, or if there exists $(k,\ell)\in S_2 \Delta T_2 $ such that $(k,\ell) \not \in \pi(S_1 \cup T_1)$, we have $ g(\pi)=0$.  Thus, by independence of the pairs $(A_{ij},B_{\pi(i)\pi(j)})$, we get
        \begin{align}
         g(\pi)
         &=\prod_{\substack{2\le \ell+m \le 4\\ 0\le \ell,m \le 2}}  \sigma^{-(\ell+m)|K_{\ell m}|}\prod_{(i,j)\in K_{\ell m}}
         \Expect_{\calP|\pi}\left[\bar{A}_{ij}^\ell \bar{B}_{\pi(i)\pi(j)}^m \right]      \nonumber \\ 
             &~~~~ \times \indc{S_1 \Delta T_1 =K_{11} \cup K_{12}  }\indc{ \pi^{-1}\left(S_2 \Delta T_2 \right) =K_{11} \cup K_{21} }.
        \label{eq:def_g_pi}
             \end{align}
      Suppose that $|V(S_1)\cap V(T_1)|= 0$ and $|V(S_2)\cap V(T_2)| \ge 1$. 
       Then for any $\pi \in \calS_n$,  $\pi( S_1 \Delta T_1) \not \subset  S_2 \cup T_2 $,
       because the former has $|V(S_1)|+|V(T_1)|$ vertices and the latter has fewer.
       As a result, $g(\pi)=0$ in view of~\prettyref{eq:def_g_pi}. 
       As a result, $\left\langle L, \phi_\sfS \phi_\sfT\right \rangle =0$. 
      The case of $|V(S_2)\cap V(T_2)| = 0$ and $|V(S_1)\cap V(T_1)| \ge 1$ is similar. 
        
   \textbf{Case~(iii)}: $|V(S_1)\cap V(T_1)| \ge 1$ and $|V(S_2)\cap V(T_2)|\ge 1$.
    Recall that in this case we assume $q \le 1/2$.
    We continue to use the notation $g(\pi)$ and $K_{\ell m}$ introduced in Case (ii). 
    Crucially, we need the following lemma, bounding the cross-moments of $\bar{A}_{ij}$ and $\bar{B}_{\pi(i)\pi(j)}$.
    \begin{lemma} \label{lmm:cond-exp}
        Assume $q \le \frac{1}{2}$.
        For any $\pi \in \calS_n$,  $(i,j)\in\binom{[n]}{2}$, and $0 \le \ell, m \le 2$ such that $2 \le \ell + m \le 4$, we have 
        $$
        \left| \Expect_{\calP|\pi}\left[\sigma^{-\ell-m} \bar{A}_{ij}^\ell\bar{B}_{\pi(i)\pi(j)}^m \right] \right|  \le \indc{\ell+m =2} + \sqrt{\frac{1}{q}} \indc{\ell+m=3}  +\frac{1}{q} \indc{\ell+m =4}. 
        $$
       \end{lemma}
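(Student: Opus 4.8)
The plan is to prove \prettyref{lmm:cond-exp} by a direct computation with a single correlated pair of centered Bernoulli variables. First I would recall that, conditionally on $\pi$, the pair $(A_{ij},B_{\pi(i)\pi(j)})$ consists of two $\Bern(q)$ variables with correlation coefficient $\rho$, so that each of $\bar{A}_{ij}$ and $\bar{B}_{\pi(i)\pi(j)}$ takes values in $\{-q,\,1-q\}$. Since $q\le 1/2$, this yields the pointwise bounds $|\bar{A}_{ij}|\le 1-q\le 1$ and $|\bar{B}_{\pi(i)\pi(j)}|\le 1-q\le 1$, and in particular $\bar{A}_{ij}^2\le(1-q)^2$ and $\bar{B}_{\pi(i)\pi(j)}^2\le(1-q)^2$. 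I would also record the moment identities $\Expect_{\calP|\pi}[\bar{A}_{ij}^2]=\Expect_{\calP|\pi}[\bar{B}_{\pi(i)\pi(j)}^2]=\sigma^2=q(1-q)$ and $|\Expect_{\calP|\pi}[\bar{A}_{ij}\bar{B}_{\pi(i)\pi(j)}]|=|\rho|\,\sigma^2\le\sigma^2$, using $|\rho|\le 1$ (valid since $\rho\in[-q/(1-q),\,1]$ when $q\le 1/2$).

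Next I would split into the three ranges of $\ell+m$, pulling the constant $\sigma^{-\ell-m}$ outside the expectation. When $\ell+m=2$, the admissible pairs are $(\ell,m)\in\{(2,0),(0,2),(1,1)\}$, for which $\sigma^{-2}\Expect_{\calP|\pi}[\bar{A}_{ij}^\ell\bar{B}_{\pi(i)\pi(j)}^m]$ equals $1$, $1$, and $\rho$ respectively, so its absolute value is at most $1=\indc{\ell+m=2}$. When $\ell+m=3$, the admissible pairs are $(2,1)$ and $(1,2)$; bounding the lone factor of $\bar{B}_{\pi(i)\pi(j)}$ (resp.\ $\bar{A}_{ij}$) in sup norm gives $|\Expect_{\calP|\pi}[\bar{A}_{ij}^2\bar{B}_{\pi(i)\pi(j)}]|\le(1-q)\sigma^2$, and dividing by $\sigma^3$ produces $(1-q)/\sigma=\sqrt{(1-q)/q}\le 1/\sqrt{q}$. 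When $\ell+m=4$, the only admissible pair is $(2,2)$; using $\bar{B}_{\pi(i)\pi(j)}^2\le(1-q)^2$ pointwise gives $|\Expect_{\calP|\pi}[\bar{A}_{ij}^2\bar{B}_{\pi(i)\pi(j)}^2]|\le(1-q)^2\sigma^2$, and dividing by $\sigma^4$ produces $(1-q)^2/\sigma^2=(1-q)/q\le 1/q$. Since for each admissible $(\ell,m)$ exactly one of the three indicators on the right-hand side is nonzero, collecting the three cases yields the claimed bound.

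The lemma is elementary and I do not anticipate any real obstacle; the only point needing care is keeping the constants sharp. One should bound the surplus factors of $\bar{A}_{ij}$ or $\bar{B}_{\pi(i)\pi(j)}$ by $1-q$ rather than by the crude $1$, so that the cancellation against $\sigma^2=q(1-q)$ in the denominator produces exactly $q^{-1/2}$ and $q^{-1}$; a bound by $1$ would instead leave $\sigma^{-1}$ and $\sigma^{-2}$, which are only $\asymp q^{-1/2}$ and $\asymp q^{-1}$ up to a factor of $2$ and would propagate spurious constants into \prettyref{lmm:L_phi_S_T}.
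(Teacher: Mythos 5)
Your proof is correct, and it follows the same overall structure as the paper's: split according to $\ell+m\in\{2,3,4\}$, treat the admissible pairs $(\ell,m)$ in each case, and use $q\le\frac12$ (together with $|\rho|\le 1$) to land on the stated constants. The only difference is in how the third- and fourth-order cross moments are handled: the paper computes them exactly, obtaining $\Expect_{\calP|\pi}[\sigma^{-3}\bar{A}_{ij}^2\bar{B}_{\pi(i)\pi(j)}]=\rho(1-2q)/\sqrt{q(1-q)}$ and $\Expect_{\calP|\pi}[\sigma^{-4}\bar{A}_{ij}^2\bar{B}_{\pi(i)\pi(j)}^2]=\bigl(q(1-q)+\rho(1-2q)^2\bigr)/\bigl(q(1-q)\bigr)$, and then bounds these via $1-2q\le 1-q$; you instead bound the surplus centered factor pointwise by $|\bar{A}_{ij}|,|\bar{B}_{\pi(i)\pi(j)}|\le 1-q$ and cancel against $\sigma^2=q(1-q)$, which yields exactly the same bounds $\sqrt{(1-q)/q}\le q^{-1/2}$ and $(1-q)/q\le q^{-1}$ while avoiding the explicit joint-moment formulas. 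Your route is marginally more elementary and robust (no bookkeeping of $\rho$-dependent terms beyond $|\rho|\le 1$), while the paper's exact computation makes visible where the correlation actually enters; either suffices for the downstream use in \prettyref{lmm:L_phi_S_T}.
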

       
       \begin{proof}[Proof of \prettyref{lmm:cond-exp}]
       We split the proof into three cases according to the value of $\ell + m$. 
         \begin{itemize}
        \item  If $\ell+m =2$, 
        \begin{align*}
            \Expect_{\calP|\pi}\left[\sigma^{-2}\bar{A}_{ij}^2\right]
            & =\Expect_{\calP|\pi}\left[\sigma^{-2}\bar{B}_{\pi(i)\pi(j)}^2\right]=1,
        \end{align*}
        and 
        \begin{align*}
            \left| \Expect_{\calP|\pi}\left[\sigma^{-2}\bar{A}_{ij}\bar{B}_{\pi(i)\pi(j)}\right] \right| = | \rho| ;
        \end{align*}
        \item  
        If $\ell+m=3$, 
        \begin{align*}
            \Expect_{\calP|\pi}\left[\sigma^{-3}\bar{A}_{ij}^2\bar{B}_{\pi(i)\pi(j)}\right] 
            & = \Expect_{\calP|\pi}\left[\sigma^{-3}\bar{A}_{ij}\bar{B}_{\pi(i)\pi(j)}^2\right]   = \frac{\rho(1-2q)}{\sqrt{q(1-q)}} \, .
        \end{align*}
         Given $q \le \frac{1}{2}$
        and $1-2q\le 1-q$, we have that
        \[
          \left| \Expect_{\calP|\pi}\left[\sigma^{-3}\bar{A}_{ij}^2\bar{B}_{\pi(i)\pi(j)}\right] \right|
          = \left| \Expect_{\calP|\pi}\left[\sigma^{-3}\bar{A}_{ij}\bar{B}_{\pi(i)\pi(j)}^2\right]  \right| \le \sqrt{\frac{1}{q}} \, .
        \]
        
        \item 
        If $\ell+m=4$,  
        we have that
          \begin{align*}
            \Expect_{\calP|\pi}\left[\sigma^{-4}\Bar{A}_{ij}^2 \Bar{B}_{\pi(i)\pi(j)}^2\right] 
            & = \frac{q^2(1-q)^2+\rho q(1-q) (1-2q)^2}{q^2(1-q)^2} \\
            &=  \frac{q(1-q) + \rho(1-2q)^2}{q(1-q)} \, .
        \end{align*}
        Since 
        \[
         -1+q  \le q(1-q) - (1-2q)^2    \le q(1-q)  + \rho  (1-2q)^2  \le q(1-q) + (1-2q)^2 \le 1-q \,  
        \]
        given $q \le \frac{1}{2}$ and $- 1 \le \rho \le 1$,
        we have that
        \begin{align*}
            \left| \Expect_{\calP|\pi}\left[\sigma^{-4}\Bar{A}_{ij}^2 \Bar{B}_{\pi(i)\pi(j)}^2\right] \right|
            & \le \frac{1}{q} \, .
        \end{align*}

        \end{itemize}
        Combining all the cases proves the lemma. 
        \end{proof}
    Applying~\prettyref{lmm:cond-exp} to~\prettyref{eq:def_g_pi}, we get that 
      \[
      g(\pi) \le q^{-\frac{|K_{12}|+|K_{21}|+2|K_{22}|}{2}}\indc{S_1 \Delta T_1 =K_{11} \cup K_{12}  }\indc{ \pi^{-1}\left(S_2 \Delta T_2 \right) =K_{11} \cup K_{21} } \, . 
       \]
      Since 
        $$
            \indc{S_1 \Delta T_1 =K_{11} \cup K_{12} }\indc{ \pi^{-1}\left(S_2 \Delta T_2 \right) =K_{11} \cup K_{21}} \le  \indc{\pi\left(\left(S_1\Delta T_1\right) \cup K_{21} \cup K_{22}\right) = \left(S_2 \Delta T_2 \right) \cup  \pi\left( K_{12} \cup K_{22}\right)} \, .
        $$
        combining it with \prettyref{eq:term2_L_st}, we have
        \begin{align*}       
            \left\langle L, \phi_\sfS \phi_\sfT\right \rangle
                & \le \Expect_{\pi} \left[ q^{-\frac{|K_{12}|+|K_{21}|+2|K_{22}|}{2}} \indc{\pi\left(\left(S_1\Delta T_1\right) \cup K_{21} \cup K_{22}\right) = \left(S_2 \Delta T_2 \right) \cup  \pi\left( K_{12} \cup K_{22}\right)}\right]. 
        \end{align*} 
          Since $ K_{21} \cup K_{22} \subset S_1 \cap T_1$, $\pi\left( K_{12} \cup K_{22}\right) \subset S_2 \cap T_2$ for any $\pi\in \calS_n$, 
        \begin{align}
         \left\langle L, \phi_\sfS \phi_\sfT\right \rangle 
              & \le \Expect_{\pi}\qth{ \sum_{E_1 \subset  S_1 \cap T_1} \sum_{E_2 \subset S_2 \cap T_2}  q^{-\frac{|E_1| + |E_2|}{2}}
              \indc{\pi \left( \left(S_1\Delta T_1 \right)\cup E_1\right) = \left(S_2 \Delta T_2 \right)\cup E_2 }} \nonumber \\
              & \le 
              \sum_{E_1 \subset  S_1 \cap T_1} \sum_{E_2 \subset S_2 \cap T_2} q^{-\frac{|E_1| + |E_2|}{2}} \Prob\left(\pi \left( \left(S_1\Delta T_1 \right)\cup E_1\right) = \left(S_2 \Delta T_2 \right)\cup E_2 \right). \label{eq:L_phi_S_T_not_disjoint_1}
        \end{align}
        Since we assume that $|V(S_1)\cap V(T_1)| \ge 1$, and $|V(S_2)\cap V(T_2)| \ge 1$, by \prettyref{lmm:O}\ref{O:4}, for any  $E_1 \subset S_1 \cap T_1$ and $E_2 \subset S_2 \cap T_2$, 
        \begin{align}
            |V(\left(S_1\Delta T_1\right) \cup E_1)| 
            & \ge  |V(I)|+|V(H)| - 2 |V(S_1)\cap V(T_1)|     \nonumber  \\
            &~~~~ + |V(E_1)| + \indc{E_1 = \emptyset, S_1 \neq T_1}      \label{eq:S_1_Delta_T_1_E_1_0}    \\
            |V(\left(S_2\Delta T_2\right) \cup E_2)|
            & \ge  |V(I)|+|V(H)| - 2 |V(S_2)\cap V(T_2)| \nonumber \\ 
            &~~~~ + |V(E_2)| + \indc{E_2 = \emptyset, S_2 \neq T_2}. \label{eq:S_1_Delta_T_1_E_1} 
        \end{align}
         Then, by \prettyref{lmm:O}\ref{O:1},
                  \begin{align*}
            & \Prob\left(\pi(\left(S_1\Delta T_1\right) \cup E_1) = \left(S_2\Delta T_2\right) \cup E_2\right)\\    
            & \le \left(\frac{\left|V(\left(S_1 \Delta T_1\right) \cup E_1) \right|}{n}\right)^{\left|V(\left(S_1 \Delta T_1\right) \cup E_1) \right|} \indc{\left|V(\left(S_1 \Delta T_1\right) \cup E_1) \right| =\left| V( \left(S_2\Delta T_2\right) \cup E_2 )  \right|}\\
            & \overset{(a)}{\le} \left(\frac{|V(S_1\cup T_1)|}{n}\right)^{  |V(H)|+|V(I)| - |V(S_1)\cap V(T_1)|-|V(S_2)\cap V(T_2)| + \frac{1}{2}\left(|V(E_1)|+|V(E_2)|+\indc{E_1 = \emptyset,S_1\neq T_1}+\indc{E_2 = \emptyset, S_2 \neq T_2}\right)}\\
            & \overset{(b)}{\le} \left(\frac{2M }{n}\right)^{  |V(H)|+|V(I)| - |V(S_1)\cap V(T_1)|-|V(S_2)\cap V(T_2)| + \frac{1}{2}\left(|V(E_1)|+|V(E_2)|+\indc{E_1 = \emptyset,S_1\neq T_1}+\indc{E_2 = \emptyset, S_2 \neq T_2}\right)} ,
            \end{align*}
            where $(a)$ applies both \prettyref{eq:S_1_Delta_T_1_E_1_0} and \prettyref{eq:S_1_Delta_T_1_E_1} and $\left|V(\left(S_1 \Delta T_1\right) \cup E_1) \right| =\left| V( \left(S_2\Delta T_2\right) \cup E_2 )  \right| \le  \left|V(S_1\cup T_1) \right|$, as $E_1 \subset S_1 \cap T_1$ and $\left(S_1 \Delta T_1\right) \cup E_1 \subset S_1 \cup T_1$; $(b)$ holds 
            because 
            $|V(S_1)|,|V(T_1)|\le M$. 
            
            

            Thus, combining the last displayed inequality with \prettyref{eq:L_phi_S_T_not_disjoint_1} yields that 
            \begin{align*}
                & \left \langle L, \phi_\sfS \phi_\sfT \right \rangle \\
                & \le  \sum_{E_1 \subset  S_1 \cap T_1} \sum_{E_2 \subset S_2 \cap T_2} q^{-\frac{1}{2}\left(|E_1| + |E_2|\right)} \\
                &~~~~  \left(\frac{2M}{n}\right)^{  |V(H)|+|V(I)| - |V(S_1)\cap V(T_1)|-|V(S_2)\cap V(T_2)| + \frac{1}{2}\left(|V(E_1)|+|V(E_2)|+\indc{E_1 = \emptyset,S_1\neq T_1}+\indc{E_2 = \emptyset, S_2 \neq T_2}\right)}\\
                & \le  \left(\frac{2M}{n}\right)^{  |V(H)|+|V(I)|- |V(S_1)\cap V(T_1)|-|V(S_2)\cap V(T_2)|} \left(\indc{S_1=T_1} +h(S_1,T_1)\right) \left( \indc{S_2 =T_2} + h(S_2,T_2)\right) \, ,
            \end{align*} 
           where for any $S,T\subset \binom{[n]}{2}$,
           $h(S,T)$ are defined according to~\prettyref{eq:h_S_T}, namely
           $$
                h(S,T) \triangleq \sum_{E \subset  S \cap T} q^{-\frac{1}{2}|E|} \left(\frac{2M}{n}\right)^{ \frac{1}{2}\left(|V(E)|+\indc{E = \emptyset}\right)},  
           $$ 
           and the last inequality holds because for any $S,T$, 
         $$
          \sum_{E \subset  S \cap T} q^{-\frac{1}{2}|E|} \left(\frac{2M}{n}\right)^{ \frac{1}{2}\left(|V(E)|+ \indc{E= \emptyset, S\neq T} \right)} \le \begin{cases}
                  1 + h(S, T)
                  & \text{ if } S = T \\
                  h(S, T)
                 & \text{ if } S \neq T 
          \end{cases} \, .
          $$
          
          Finally, it remains to verify~\prettyref{eq:upper_bound_h}. 
          In particular, 
           \begin{align*}
            h(S,T)
          & \le \frac{ 2^{K} (2\NumVer)^{\frac{\NumVer}{2}}}{ 
          \sqrt{\min_{E \subset S\cap T}  n^{|V(E)|+ \indc{E=\emptyset}} q^{|E| }} }  \le  \frac{ 2^{K} (2\NumVer)^{\frac{\NumVer}{2}}}{ 
          \sqrt{\min_{J \subset S: |V(J)| \ge 1}  n^{|V(J)|} q^{|E(J)| }} }  = \frac{ 2^{K} (2\NumVer)^{\frac{\NumVer}{2}}}{\sqrt{\Phi_S}}
          \, ,
        \end{align*}
        where the first inequality holds because there are at most $2^K$ different choices of edge set $E \subset S\cap T$;
        the second inequality follows because
        \[
\min_{E \subset S\cap T}  n^{|V(E)|+ \indc{E=\emptyset}} q^{|E| }
= 
\min\sth{n,
\min_{E \subset S\cap T, E\neq \emptyset}  n^{|V(E)|} q^{|E| }}
\geq 
\min_{J \subset S, |V(J)|\geq 1}  n^{|V(J)|} q^{|E(J)| }.
\]
with the last minimum over all subgraphs $J$ with at least one vertex. Note that here we identify $S$ with its edge-induced subgraph of $K_n$. By symmetry we also have $
            h(S,T) \le\frac{ 2^{K} (2\NumVer)^{\frac{\NumVer}{2}}}{\sqrt{\Phi_T}}$ 
            and \prettyref{eq:upper_bound_h} follows.

 \section{Postponed proofs in~\prettyref{sec:computation_color_coding}}  \label{app:algorithm}
 

  \subsection{Proof of \prettyref{prop:er_Y_calT_testing_error}}

   To show \prettyref{eq:tildef-converge}, it is equivalent to show
  that under both $\calP$ and $\calQ$,
     \begin{align*}
       \frac{\beta Y_\calT - r^2 f_{\calT}}{r^2 \mu} \xrightarrow{L_2} 0, 
       \quad \text{ where }\mu \triangleq \expects{f_{\calT}}{\calP} 
       \, .
   \end{align*}
   By definition, 
        \begin{align}
        \beta Y_{\calT} (A,B)
        & = \frac{1}{t^2} \sum_{i=1}^t \sum_{j=1}^t   X_\calT^{ij}(A,B)  \, , \label{eq:Y_calT}
   \end{align} 
    where for any $1\le i, j \le t$, 
    \begin{align}
        X_\calT^{ij}(A,B)  
        & \triangleq \sum_{[H]\in \calT} \beta \aut(H)
        X_H (\bar{A}, \mu_i ) X_H (\bar{B}, \nu_j) \nonumber \\
        & = \sum_{[H]\in \calT} a_H \sigma^{-2|H|}
        X_H (\bar{A}, \mu_i ) X_H (\bar{B}, \nu_j) \, , \label{eq:X_calT_ij} 
    \end{align} 
    where the second equality holds by $ \beta \aut(H) =a_H \sigma^{-2|H|} $ for each $H \in \calT$, in view of   \prettyref{eq:beta}, \prettyref{eq:a_H}, and \prettyref{lmm:O}\ref{O:1} in \prettyref{app:prelim}. 
    
    Note that each $X_\calT^{ij}(A,B)/r^2$ is an unbiased estimator of $f_\calT (A,B)$, as
     \begin{align}
       \expect{X_{\calT}^{ij} (A,B)  \mid A,B }
       & =  r^2 \sum_{[H]\in \calT} a_H  \sigma^{-2|H|} W_H(\bar{A}) W_H(\bar{B}) = r^2 f_\calT (A,B) \, . \label{eq:E_X_calH_A_B}
   \end{align} 
    Moreover, 
    $\{X_\calT^{ij}(A,B)\}_{1\le i, j \le t}$ are identically distributed. And conditional on $A$ and $B$, for any $1\le i\le j'\le t$,  $X_\calT^{ij}(A,B)$ and $ X_\calT^{i'j'}(A,B)$ are independent if and only if $ i' \neq i$ and $j' \neq j$. 
It follows that 
$
\expect{\beta Y_\calT(A,B) \mid A, B} = r^2 f_{\calT} (A, B)
$.
Thus $\expect{\beta Y_{\calT}-r^2 f_{\calT}}=0$ under both $\calP$ and $\calQ$.
  
  Next, we bound the variance of $\beta Y_\calT-r^2 f_{\calT}$. 
  In particular, by the law of total variance, under both $\calP$ and $\calQ$,  we get that
   \begin{align}
       \Var\left(\beta Y_\calT - r^2 f_{\calT} \right) 
       & = \Var \left(
       \expect{\beta  Y_\calT -r^2 f_{\calT} \mid A, B} \right)
       + \Expect\left[ \Var\left( \beta  Y_\calT -r^2 f_{\calT} \mid A, B \right)\right] \notag \\
       & \overset{(a)}{=} \Expect\left[ \Var\left(\beta  Y_\calT \mid A, B \right)\right] \nonumber \\
       & = 
       \frac{1}{t^4} \sum_{i=1}^t\sum_{j=1}^t \sum_{i'=1}^t\sum_{j'=1}^t  \Expect\left[ \Cov\left( X_\calT^{ij}(A,B),  X_\calT^{i'j'}(A,B)  \mid A, B \right)\right], \label{eq:total_var}
   \end{align}
   where $(a)$ holds because
   $
\expect{\beta  Y_\calT(A,B) \mid A, B} = r^2 f_{\calT} (A, B)
$ and $\Var\left( \beta  Y_\calT -r^2 f_{\calT} \mid A, B \right)=\Var\left(\beta   Y_\calT \mid A, B\right)$.

   Next, we introduce an auxiliary result, bounding 
the conditional covariance.
   
   \begin{lemma}\label{lmm:er_expect_var_A_B}
    Suppose \prettyref{eq:K_condition} holds and $q \le \frac{1}{2}$. 
    Then under both $\calP$ and $\calQ$,
    for any $0\le i,i',j,j'  \le t$,
    \begin{align*}
          \Expect\left[ \Cov \left( X_{\calT}^{ij}  (A,B), X_{\calT}^{i'j'}(A,B)  \mid A,B\right) \right] 
          \le  \xi  \left( r^{2+\indc{i\neq i'}+\indc{j\neq j'}}   -r^4\right)\mu^2 \, ,
    \end{align*}
    for  some $\xi>0$ and $\xi=o(1)$.
    \end{lemma}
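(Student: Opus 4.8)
The plan is to expand $\Cov\!\bigl(X_{\calT}^{ij}(A,B),X_{\calT}^{i'j'}(A,B)\mid A,B\bigr)$ as a quadratic form in the centered adjacency entries and then, after averaging over $\calP$ or $\calQ$, reduce it to the sums already estimated in the proof of \prettyref{prop:ER_var_calP}. First I would record the second‑moment behaviour of the color‑coding counts. Writing $X_H(M,\mu)=\sum_{S\cong H}\chi_\mu(V(S))\prod_{e\in E(S)}M_{e}$ and recalling that every $S\cong H$ with $[H]\in\calT$ has $|V(S)|=K+1$, for two \emph{independent} colorings $\mu_i\neq\mu_{i'}$ one has $\Expect[X_H(M,\mu_i)X_I(M,\mu_{i'})\mid M]=r^2 W_H(M)W_I(M)$, while for a \emph{single} coloring
\[
\Expect[X_H(M,\mu)X_I(M,\mu)\mid M]=\sum_{S\cong H}\sum_{T\cong I}p_{ST}\prod_{e\in E(S)}M_{e}\prod_{e\in E(T)}M_{e},\qquad p_{ST}\triangleq\Prob\bigl(\mu|_{V(S)}\text{ and }\mu|_{V(T)}\text{ both injective}\bigr).
\]
The elementary facts I would use are: $p_{ST}\le r$ always (it is at most $\Prob(\mu|_{V(S)}\text{ injective})=r$); $p_{SS}=r$; and, crucially, $p_{ST}=r^2$ whenever $V(S)\cap V(T)=\emptyset$, because then $\mu|_{V(S)}$ and $\mu|_{V(T)}$ depend on disjoint coordinates and are independent. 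I also use $\Expect[X_{\calT}^{ij}\mid A,B]=r^2 f_{\calT}(A,B)$ from \prettyref{eq:E_X_calH_A_B}, and that the $\mu$‑colorings are independent of the $\nu$‑colorings, so the $A$‑part and $B$‑part of every product factor.

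Next I would split into the four cases determined by whether $i=i'$ and whether $j=j'$. If $i\neq i'$ and $j\neq j'$ the four colorings are mutually independent, so $X_{\calT}^{ij}$ and $X_{\calT}^{i'j'}$ are conditionally independent given $(A,B)$ and both sides of the claimed inequality vanish. In each of the other three cases, using the identities above together with $W_H(\bar A)W_I(\bar A)=\sum_{S\cong H,T\cong I}\prod_{e\in E(S)}\bar A_e\prod_{e\in E(T)}\bar A_e$ (and likewise for $\bar B$), the conditional covariance takes the form
\[
\Cov\bigl(X_{\calT}^{ij},X_{\calT}^{i'j'}\mid A,B\bigr)=\sum_{[H],[I]\in\calT}a_H a_I\,\sigma^{-4K}\sum_{\substack{S,S'\cong H\\ T,T'\cong I}} c_{\sfS,\sfT}\prod_{e\in E(S)}\bar A_e\prod_{e\in E(T)}\bar A_e\prod_{e\in E(S')}\bar B_e\prod_{e\in E(T')}\bar B_e,
\]
with $\sfS=(S,S')$, $\sfT=(T,T')$, and $c_{\sfS,\sfT}=p_{ST}p_{S'T'}-r^4$ if $i=i',j=j'$; $c_{\sfS,\sfT}=r^2p_{ST}-r^4$ if $i=i',j\neq j'$; $c_{\sfS,\sfT}=r^2p_{S'T'}-r^4$ if $i\neq i',j=j'$. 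From $p_{ST}\ge r^2$ one gets $c_{\sfS,\sfT}\ge0$; from $p_{ST},p_{S'T'}\le r$ one gets $c_{\sfS,\sfT}\le r^{2+\indc{i\neq i'}+\indc{j\neq j'}}-r^4$; and from the ``disjoint$\Rightarrow r^2$'' fact one gets $c_{\sfS,\sfT}=0$ unless $V(S)\cap V(T)\neq\emptyset$ or $V(S')\cap V(T')\neq\emptyset$.

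Now I would take expectations. Under $\calQ$, since $A\perp B$, $\Expect_{\calQ}\bigl[\prod_{E(S)}\bar A\prod_{E(T)}\bar A\prod_{E(S')}\bar B\prod_{E(T')}\bar B\bigr]=\sigma^{4K}\indc{S=T}\indc{S'=T'}$, which forces $[H]=[I]$ and leaves only diagonal terms with $p_{SS}=p_{S'S'}=r$; summing with $a_H^2\sub_n(H)^2=\rho^{2K}$ gives exactly $\Expect_{\calQ}[\Cov]=(r^{2+\indc{i\neq i'}+\indc{j\neq j'}}-r^4)\rho^{2K}|\calT|$, which equals $\xi(r^{2+\indc{i\neq i'}+\indc{j\neq j'}}-r^4)\mu^2$ with $\xi=1/\mu=o(1)$ since $\mu=\rho^{2K}|\calT|=\omega(1)$ by \prettyref{prop:mean_p_q_var_q} and \prettyref{eq:K_condition}. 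Under $\calP$, I would instead use $\Expect_{\calP}\bigl[\prod_{E(S)}\bar A\prod_{E(T)}\bar A\prod_{E(S')}\bar B\prod_{E(T')}\bar B\bigr]=\sigma^{4K}\langle L,\phi_{\sfS}\phi_{\sfT}\rangle$, so that $\Expect_{\calP}[\Cov]=\sum_{[H],[I]}a_Ha_I\sum_{S,S'\cong H}\sum_{T,T'\cong I}c_{\sfS,\sfT}\langle L,\phi_{\sfS}\phi_{\sfT}\rangle$. By \prettyref{lmm:L_phi_S_T}(ii), $\langle L,\phi_{\sfS}\phi_{\sfT}\rangle=0$ in the single‑overlap case, so every surviving term has $c_{\sfS,\sfT}\neq0$ \emph{and} double overlap; bounding $c_{\sfS,\sfT}\le r^{2+\indc{i\neq i'}+\indc{j\neq j'}}$ there (and using $c_{\sfS,\sfT}\ge0$ to drop the negative contributions), the remaining double‑overlap sum is precisely $\termII$ of \prettyref{eq:ER_varP_termII} with $\calH=\calT$, which was shown in the proof of \prettyref{prop:ER_var_calP} to be $o(\mu^2)$. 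Since $r\le 1/2$ for $K$ large, $r^{2+\indc{i\neq i'}+\indc{j\neq j'}}\le 2\bigl(r^{2+\indc{i\neq i'}+\indc{j\neq j'}}-r^4\bigr)$, so we again obtain $\Expect_{\calP}[\Cov]\le\xi(r^{2+\indc{i\neq i'}+\indc{j\neq j'}}-r^4)\mu^2$ with $\xi=o(1)$; taking $\xi$ to be the maximum of the two choices over $\{\calP,\calQ\}$ completes the proof.

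The main obstacle will be the $\calP$ step: one must verify that the color‑coding coefficient $c_{\sfS,\sfT}$ contributes exactly the extra factor $r^{2+\indc{i\neq i'}+\indc{j\neq j'}}$ (rather than a full $r^4$) on the overlapping terms and vanishes off them, so that $\Expect_{\calP}$ of the conditional covariance collapses onto the rescaled double‑overlap sum $\termII$ already controlled for $f_{\calT}$ — this is what makes the averaging over $t\asymp 1/r$ colorings wash out the approximation error. A secondary point requiring care is the sign of $\langle L,\phi_{\sfS}\phi_{\sfT}\rangle$ in the double‑overlap regime, which is handled exactly as in the bound for $\termII$.
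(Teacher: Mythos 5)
Your proposal is correct and follows essentially the same route as the paper's proof: expand the conditional covariance over the products of centered entries, use the coloring facts ($\expect{\chi\chi}\le r^{1+\indc{i\neq i'}}$, equal to $r^2$ for disjoint vertex sets, and at least $r^2$ in general), reduce to the diagonal under $\calQ$ to get $(r^{2+\indc{i\neq i'}+\indc{j\neq j'}}-r^4)\mu$, and under $\calP$ kill the single-overlap terms via \prettyref{lmm:L_phi_S_T}(ii) and bound the surviving double-overlap sum by $\termII$ from the proof of \prettyref{prop:ER_var_calP}. The only deviations are cosmetic: you treat the case $i\neq i'$, $j\neq j'$ separately (needed for your factor-of-two step $r^{2+s}\le 2(r^{2+s}-r^4)$, which would fail there) and you make the positive-association fact $p_{ST}\ge r^2$ explicit, which the paper uses implicitly; these only change the final $o(1)$ constant.
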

   
        With~\prettyref{lmm:er_expect_var_A_B}, we first finish the proof of~\prettyref{prop:er_Y_calT_testing_error}.
        Combining this with \prettyref{eq:total_var}, 
        and \prettyref{lmm:er_expect_var_A_B}, 
        under both $\calP$ and $\calQ$,
         we get that 
        \begin{align*}
             \Var\left(\beta Y_\calT -r^2 f_\calT \right) 
             & = 
             \frac{1}{t^4} \sum_{i=1}^t\sum_{j=1}^t \sum_{i'=1}^t\sum_{j'=1}^t   o\left( \left( r^{2+\indc{i\neq i'}+\indc{j\neq j'}}   -r^4\right)\mu^2\right)\\
             & =
             o\left(\left(\frac{1}{t^2} r^2 + \frac{2}{t} r^3  \right) \mu^2 \right) = o\left(r^4\mu^2 \right)  \, ,
        \end{align*}
        where the last inequality holds due to $t=\lceil 1/r \rceil$.
        Therefore, $ \frac{\beta Y_\calT - r^2 f_\calT}{r^2 \mu}$ converges to $0$ in the $L_2$ norm.

\bigskip
Finally, we are left to prove 
\prettyref{lmm:er_expect_var_A_B}.
          \begin{proof}[Proof of \prettyref{lmm:er_expect_var_A_B}]
    First, we have
          \begin{align*}
            & \Cov\left( X_{\calT}^{ij}  (A,B), X_{\calT}^{i'j'}(A,B)  \mid A,B\right) \\
            &=  \expect{X_{\calT}^{ij}  (A,B) X_{\calT}^{i'j'}(A,B)  \mid A,B } -  \expect{X_{\calT}^{ij}  (A,B) \mid A,B } \expect{X_{\calT}^{i'j'}  (A,B) \mid A,B } \\
            & =   \expect{X_{\calT}^{ij}  (A,B) X_{\calT}^{i'j'}(A,B)  \mid A,B } - r^4 f_\calT (A,B)^2, 
       \end{align*}
       where the last equality holds by \prettyref{eq:E_X_calH_A_B}.
       By \prettyref{eq:X_calT_ij}, 
       \begin{align}
            \expect{X_{\calT}^{ij}  (A,B) X_{\calT}^{i'j'}(A,B) \mid A,B  } 
            & = \sum_{[H]\in \calT} \sum_{[I] \in \calT} a_H a_I \sigma^{-2|H|-2|I|} \sum_{S_1\cong H} \sum_{S_2 \cong H} \sum_{T_1 \cong I} \sum_{T_2\cong I} \nonumber \\
            &~~~~ \expect{\chi_{\mu_i}(V(S_1)) \chi_{\mu_{i'}}(V(T_1))}\expect{ \chi_{\nu_j} (V(S_2))\chi_{\nu_{j'}}(V(T_2))} \nonumber \\
            &~~~~ \prod_{(k,\ell)\in S_1} \overline{A}_{k\ell} \prod_{(k,\ell)\in S_2} \overline{B}_{k\ell} \prod_{(k,\ell)\in T_1} \overline{A}_{k\ell} \prod_{(k,\ell)\in T_2} \overline{B}_{k\ell } \nonumber \\
            & = \sum_{[H],[I]\in \calT} a_H a_I \sum_{S_1\cong H}\sum_{T_1\cong I}  \sum_{S_2\cong H}\sum_{T_2 \cong I}  \nonumber \\
           &~~~~ \expect{\chi_{\mu_i}(V(S_1)) \chi_{\mu_{i'}}(V(T_1))}\expect{ \chi_{\nu_j} (V(S_2))\chi_{\nu_{j'}}(V(T_2))} \phi_\sfS   \phi_\sfT \, , \label{eq:expect_condition_X_A_B} 
       \end{align}
    where in the last equality $\sfS=(S_1,S_2), \sfT=(T_1,T_2)$ and $\phi_{\sfS}$ is defined in \prettyref{eq:phi_S}.

  
     \begin{itemize}
         \item Under $\calQ$, 
         we have
        \begin{align*}
               \Expect_{\calQ} \left[ X_{\calT}^{ij}  (A,B) X_{\calT}^{i'j'}(A,B) \right]
               & \overset{(a)}{=}\sum_{[H],[I]\in \calT} a_H a_I \sum_{S_1 \cong H}\sum_{T_1\cong I} \sum_{S_2 \cong H}\sum_{T_2 \cong I} \nonumber \\
               &~~~~   \expect{\chi_{\mu_i}(V(S_1)) \chi_{\mu_{i'}}(V(T_1))}\expect{ \chi_{\nu_j}(V(S_2)) \chi_{\nu_{j'}}(V(T_2))} \indc{\sfS = \sfT } \\
               & \overset{(b)}{=} r^{2+\indc{i\neq i'}+\indc{j\neq j'}} \sum_{[H]\in \calT} a_H^2 \sub_n(H)^2\\
               & \overset{(c)}{=} r^{2+\indc{i\neq i'}+\indc{j\neq j'}} \Expect_\calQ \left[f_\calT(A,B)^2 \right], 
         \end{align*}
         where $(a)$ holds because $\left\{\phi_{\sfS}\right\}_{S_1,S_2 \subset\binom{[n]}{2}}$ are orthonormal; $(b)$ holds because when 
         $\sfS = \sfT$, given $\chi_{\mu_i}(V(S_1)) , \chi_{\mu_{i'}}(V(T_1)) $ are independent for $i\neq i'$ and $ \chi_{\nu_j}(V(S_2)) , \chi_{\nu_{j'}}(V(T_2))$ are independent for $j\neq j'$, we have that 
         \begin{align*}
             \expect{\chi_{\mu_i}(V(S_1)) \chi_{\mu_{i'}}(V(T_1))}\expect{ \chi_{\nu_j}(V(S_2)) \chi_{\nu_{j'}}(V(T_2))}
             =  r^{2+\indc{i\neq i'}+\indc{j\neq j'}} \, ;
         \end{align*}
         $(c)$ holds by the definition of $a_H$
         given in \prettyref{eq:a_H} and \prettyref{prop:mean_p_q_var_q}.
         Hence, 
         \begin{align*}
           \Expect_\calQ\left[ \Cov\left( X_{\calT}^{ij}  (A,B), X_{\calT}^{i'j'}(A,B)  \mid A,B\right) \right]
           & =  \left( r^{2+\indc{i\neq i'}+\indc{j\neq j'}} - r^4 \right) \Expect_\calQ \left[f_\calT(A,B)^2 \right] \\
           & =\xi_1  \left( r^{2+\indc{i\neq i'}+\indc{j\neq j'}}   -r^4\right)\mu^2
       \end{align*} 
       for some $\xi_1>0$ and $\xi_1=o(1)$,
        where the last equality holds because  $\Expect_\calQ \left[f_\calT(A,B)^2 \right] = \Expect_\calP \left[f_\calT(A,B) \right] =\mu$
        in view of~\prettyref{prop:mean_p_q_var_q}
        and $\mu=\omega(1)$ in view of  \prettyref{prop:ER_var_calP}, given \prettyref{eq:K_condition} holds. 
   \item Under $\calP$, we have 
  \begin{align*}
      &\Expect_{\calP} \left[ \Cov \left( X_{\calT}^{ij}  (A,B) , X_{\calT}^{i'j'}(A,B) | A,B \right) \right] \\
      & \le \sum_{[H],[I]\in \calT} a_H a_{I} \sum_{S_1 \cong H} \sum_{T_1 \cong I} \sum_{S_2 \cong H}\sum_{T_2 \cong I}\\
      &~~~~  \left( \expect{\chi_{\mu_i} (V(S_1)) \chi_{\mu_{i'}}(V(T_1))}  \expect{\chi_{\nu_j}(V(S_2)) \chi_{\nu_{j'}}(V(T_2))} - r^4\right) \left\langle L, \phi_\sfS \phi_\sfT \right \rangle \indc{\left\langle L, \phi_\sfS \phi_\sfT \right \rangle  \ge 0} \, ,
  \end{align*}
  where the last equality holds by \prettyref{eq:f_calH_1} 
  and \prettyref{eq:expect_condition_X_A_B}. 
  Fix some $S_1, T_1, S_2, T_2$  such that $S_1 \cong H$, $T_1 \cong H$, $S_2 \cong I$ and $T_2 \cong I$ for some $[H], [I] \in \calT$. 
  If $V(S_1)\cap V(T_1) =\emptyset$ and $V(S_2)\cap V(T_2) =\emptyset$, 
  we have
  $$
    \expect{\chi_{\mu_i}(V(S_1)) \chi_{\mu_{i'}}(V(T_1))} = \expect{\chi_{\nu_j}(V(S_2)) \chi_{\nu_{j'}}(V(T_2))} = r^2;
  $$
  If $V(S_1)\cap V(T_1) \neq \emptyset$ or $V(S_2)\cap V(T_2) \neq \emptyset$, 
    we have 
    $$
    \expect{\chi_{\mu_{i}}(V(S_1)) \chi_{\mu_{i'}}(V(T_1))} \le r^{1+\indc{i \neq i' }}, \quad \expect{\chi_{\nu_j}(V(S_2)) \chi_{\nu_{j'}}(V(T_2))}\le r^{1+\indc{j \neq j' }}.
  $$
  Therefore, we have
  \begin{align*}
        & \Expect_\calP\left[  \Cov{\left[ X_{\calT}^{ij} (A,B), X_{\calT}^{i'j'} (A,B) \mid A,B\right] }\right] \\
        & \le  \left( r^{2+ \indc{i \neq i' }  + \indc{j \neq j'} } -r^4\right) \sum_{[H]\in \calT}a_H \sum_{[I]\in \calT} a_I\sum_{S_1\cong H} \sum_{S_2\cong H} \sum_{T_1 \cong I}\sum_{T_2\cong I}  
       \\
        &~~~~ 
        \left\langle L, \phi_\sfS \phi_\sfT \right \rangle \indc{\left\langle L, \phi_\sfS \phi_\sfT \right \rangle \ge 0} \indc{ V(S_1) \cap V(T_1)\neq \emptyset \text{ or } V(S_2) \cap V(T_2) \neq \emptyset}\\
        &  \overset{(a)}{=} 
        \left( r^{2+ \indc{i \neq i' }  + \indc{j \neq j'} } -r^4\right) \sum_{[H]\in \calT}a_H \sum_{[I]\in \calT} a_I\sum_{S_1\cong H} \sum_{S_2\cong H} \sum_{T_1 \cong I}\sum_{T_2\cong I}  
        \\
        &~~~~ \left\langle L, \phi_\sfS \phi_\sfT \right \rangle \indc{\left\langle L, \phi_\sfS \phi_\sfT \right \rangle \ge 0} \indc{ V(S_1) \cap V(T_1)\neq \emptyset \text{ and } V(S_2) \cap V(T_2) \neq \emptyset}\\
        & \overset{(b)}{\le} \xi_2 \left( r^{2+\indc{i\neq i'}+\indc{j\neq j'}} -r^4\right) \mu^2  \, 
  \end{align*}
  for some $\xi_2>0$ and $\xi_2=o(1)$,
 where $(a)$ follows from~\prettyref{eq:L_phi_S_T_not_disjoint-1}; $(b)$ holds by \prettyref{eq:ER_varP_termII} and \prettyref{eq:varp_tree_termII}. 
 \end{itemize}
Choosing $\xi=\min\{\xi_1,\xi_2\}$, our desired result follows.  
     \end{proof}

\subsection{Proof of \prettyref{lmm:X_H_M}}\label{app:proof_lemma_XHM}
\begin{proof}[Proof of \prettyref{lmm:X_H_M}]
        We first bound the total time complexity of \prettyref{alg:X_M_H}.
            The run time for the DFS
        is $O(K)$. 
        The total number of subsets $C\subset[K+1]$  with $|C| = k+1$ is $\binom{K+1}{k+1}$. Fixing a color set $C$ with $|C| = k+1$, the total number of pairs of $(C_1,C_2) \in \sfC(C)$ is $2^{k+1}$. Thus according to~\prettyref{eq:X_recursive}, 
        the total time complexity of computing 
        $Y(x,T_i,C,\mu)$
        for all $x \in [n]$ and 
        all color set $C$ with $|C| = |V(T_i)|=k$
        is $O\left( \binom{K+1}{k+1} 2^{k+1} n^2\right)$. 
        Thus, the total time complexity of \prettyref{alg:X_M_H} is bounded by
        $$ 
        O \left(K + \sum_{i=1}^{K}  \binom{K+1}{|V(T_i)|} 2^{|V(T_i)|} n^2 \right) \le
        O \left(K + \sum_{k=1}^{K} K \binom{K+1}{k+1} 2^{k +1} n^2\right) = O(K 3^{K+1} n^2),
        $$ 
        where the first inequality holds because
        the total number of $i \in [K]$ such that $|V(T_i)|=k$ is at most $K$.
                
                
        Next, we prove the correctness of \prettyref{alg:X_M_H}. For any $V\subset [n]$, $\chi_\mu (V, C) $ is the indicator for the event that $\mu(V)$ is colorful and $\{\mu(x)\}_{x\in V} = C$. 
        For any $x\in [n]$,  any tree $T_0$ with a single vertex $u$, and any color set $ C \subset [K+1]$, define
         \begin{align}
             X(x, T_0, C,\mu)   
             \triangleq \sum_{\phi: \{u\} \to [n]} \chi_\mu (\phi(u), C) \times \indc{\phi(u) = x} 
             =  \chi_\mu (x, C) = \indc{\{\mu(x)\} = C}\, . \label{eq:X_M_T_u_C_single}
        \end{align}
        Moreover, for any $1\le i\le K$ and tree $T_i$ with root $p_i$,  define
        \begin{align}
            X (x,T_i, C, \mu )
            & \triangleq \sum_{\phi: V(T_i) \to [n]}  \chi_\mu (\phi(V(T_i)), C ) \times \indc{\phi(p_i) = x}\times \prod_{(i,j)\in E[\phi(T_i)]} M_{ij}  \, . \label{eq:X_M_T_u_C}
        \end{align}
       
         
        Note that by definition, we have $X(x,T_0, C,\mu )  =  Y(x,T_0, C,\mu)$. 
        We proceed to show that   
        $X(x,T_i, C,\mu )  =  Y(x,T_i, C,\mu)$ for all $1 \le i \le K$.
        Recall that by removing edge $e_i$ in $T_i$, we get two rooted trees $T_{a_i}$ and $T_{b_i}$, where $T_{a_i}$ is rooted at $ p_i$ and $T_{b_i}$ is rooted at $ c_i$. 
        For any mapping $\phi: V(T_i)\to [K+1]$, let $\phi_1$ (resp.\ $\phi_2$) denote $\phi$ restricted to $V(T_{a_i}) $ (resp.\ $V(T_{b_i})$).
       Then we have
        \[
        \chi_\mu(\phi(V(T_i)), C )=
          \sum_{(C_1,C_2) \in \sfC(C)}\chi_\mu (\phi_1(V(T_{a_i})),C_1 ) \times \chi_\mu (\phi_2(V(T_{b_i})), C_2) \, .
        \]
        Hence, by \prettyref{eq:X_M_T_u_C}, 
        \begin{align}
            & X(x,T_i,C,\mu)  \nonumber \\
            & = \sum_{(C_1,C_2) \in \sfC } \left( \sum_{\phi_1: V(T_{a_i}) \to [n]} \chi_{\mu}(\phi_1 (V(T_{a_i})), C_1) \times \indc{ \phi_1(p_i) = x} \times
            \prod_{(i,j)\in E[\phi_1(T_{a_i})]} M_{ij} \right) \nonumber \\
            &~~~~ \times \left( \sum_{\phi_2: V(T_{b_i}) \to [n]} \chi_{\mu}(\phi_2 (V(T_{b_i})), C_2) \times \prod_{(i,j)\in E[\phi_2(T_{b_i})]} M_{ij}  \times M_{x \phi_2(c_i)} \right)\nonumber \\
            & =  \sum_{(C_1,C_2) \in \sfC } \left( \sum_{\phi_1: V(T_{a_i}) \to [n]} \chi_{\mu}(\phi_1 (V(T_{a_i})), C_1) \times \indc{ \phi_1(p_i) = x} \times
            \prod_{(i,j)\in E[\phi_1(T_{a_i})]} M_{ij} \right) \nonumber \\
            &~~~~ \times \sum_{y \in [n]\backslash \{x\}} \left( \sum_{\phi_2: V(T_{b_i}) \to [n]} \chi_{\mu}(\phi_2 (V(T_{b_i})), C_2) \times \indc{\phi_2(c_i) = y} \times \prod_{(i,j)\in E[\phi_2(T_{a_i})]} M_{ij} \right) \times M_{xy} \nonumber \\
            & = \sum_{y \in [n]\backslash \{x\}}\sum_{(C_1,C_2) \in \sfC (C)} X(x,T_{a_i}, C_1, \mu) \times X(y,T_{b_i}, C_2,\mu) \times M_{xy} \, ,
            \label{eq:x_T_recursive} 
        \end{align}
        where the last equality holds by the fact that $T_{a_i}$ is rooted at $p_i$ and 
        $T_{b_i}$ is rooted at $c_i$, \prettyref{eq:X_M_T_u_C_single},
        and \prettyref{eq:X_M_T_u_C}.   
        Hence, by \prettyref{eq:X_recursive}, \prettyref{eq:x_T_0}, \prettyref{eq:x_T_recursive}, and \prettyref{eq:X_M_T_u_C_single}, it follows that for any $0\le i\leq K$, 
        \begin{align}
                     X(x,T_i, C,\mu )  =  Y(x,T_i, C,\mu)   \,. \label{eq:X_Y_Equivalent}
        \end{align}
        In particular, we get that 
        $X(x,T_K, [K+1],\mu)  =  Y(x,T_K,[K+1],\mu)$. Thus to prove the correctness of \prettyref{alg:X_M_H}, it remains to check that $\sum_{x\in [n]} X(x,T_K,[K+1],\mu)=\aut(H) X_H(M,\mu)$.
          By \prettyref{eq:X_M_T_u_C}, we have
        \begin{align}
            & \sum_{x\in [n]} X(x,T_K,[K+1],\mu) \nonumber \\
            & = \sum_{x\in [n]}  \sum_{\phi: V(T_K) \to [n]}  \chi_\mu (\phi(V(T_K)), [K+1]) \times \indc{\phi(p_K) = x}\times \prod_{(i,j)\in E[\phi(T_K)]} M_{ij} \nonumber \\
            & =  \sum_{\phi: V(T_K) \to [n]}  \chi_{\mu} (\phi(V(T_K)), [K+1]) \times \prod_{(i,j)\in E[\phi(T_K)]} M_{ij} \nonumber \\
            & \overset{(a)}{=}  \sum_{\phi: V(H) \to [n]}  \chi_{\mu} (\phi(V(H)), [K+1]) \times \prod_{(i,j)\in E[\phi(H)]} M_{ij} \nonumber \\
            & \overset{(b)}{=} \aut(H)  \sum_{S:S\cong H} \chi_{\mu}(V(S)) \prod_{(i,j)\in S} M_{ij} \nonumber \\
            & \overset{(c)}{=} \aut(H) X_H(M,\mu)\,,  \label{eq:claim2}
        \end{align}
        where $(a)$ holds because $T_K $ is tree $H$ rooted at node $p_K$;  $(b)$ holds because for any $S \subset \binom{[n]}{2}$ such that $S\cong H$,
        there are $\aut(H)$ different mapping $\phi: V(H) \to V(S)$ such that $\phi(H)=S$, and $\chi_{\mu}(V(S)) = \chi_{\mu}(V(S), [K+1])$ by definition; $(c)$ holds by \prettyref{eq:X_H_M}. 
        \end{proof}

\bibliography{low_degree_ref}
\bibliographystyle{alpha}

\end{document}